\documentclass[12pt,a4paper]{amsart}
\usepackage{mathrsfs}
\usepackage{amsfonts}
\usepackage{txfonts}

\usepackage{hyperref}
\usepackage{latexsym}
\usepackage{amssymb}

\newtheorem{theorem}{Theorem}[section]
\newtheorem{lemma}[theorem]{Lemma}
\newtheorem{corollary}[theorem]{Corollary}
\newtheorem{proposition}[theorem]{Proposition}

\theoremstyle{definition}
\newtheorem{definition}[theorem]{Definition}

\newtheorem{remark}[theorem]{Remark}

\newcommand{\R}{\mathbb{R}}
\newcommand{\N}{\mathbb{N}}
\newcommand{\eps}{\varepsilon}

\numberwithin{equation}{section}

\begin{document}

\title[Representation  of MNCs and its applications]
{{\bf Representation  of measures of noncompactness and its applications related to an initial-value problem in Banach spaces}}

\author{Xiaoling Chen, \;Lixin Cheng$^\dag$}

\address{Xiaoling Chen: School of Mathematical Sciences, Xiamen University,
Xiamen, 361005, China}
\email{30128299@qq.com\;\;(X. Chen)}
\address{ Lixin Cheng:  School of Mathematical Sciences, Xiamen University,
Xiamen, 361005, China}
\email{lxcheng@xmu.edu.cn\;\;(L. Cheng)}
\thanks{$^\dag$ The corresponding author; support
by NSFC, grant 11731010}

\date{25/01/2021}

\begin{abstract} The purpose of  this paper is devoted to  studying  representation  of   measures of non generalized compactness, in particular, measures of  noncompactness, of non-weak compactness, and of non-super weak compactness, etc,  defined on Banach spaces and its applications. With the aid of a three-time order preserving embedding theorem,  we show that for every Banach space $X$, there exist a Banach function space $C(K)$ for some compact Hausdorff space $K$, and an order-preserving affine mapping $\mathbb T$ from the super space $\mathscr B$ of all nonempty bounded subsets of $X$ endowed with the Hausdorff metric to the positive cone $C(K)^+$ of $C(K)$ such that for every convex measure, in particular, regular measure, homogeneous measure, sublinear measure of non generalized compactness  $\mu$ on $X$, there is a convex function $\digamma$ on the cone $V=\mathbb T(\mathscr B)$ which is Lipschitzian on each bounded set of $V$ such that
\[\digamma(\mathbb T(B))=\mu(B),\;\;\forall\;B\in\mathscr B.\]
As its applications, we show a class of basic integral inequalities related to an initial-value problem in Banach spaces, and prove a solvability result of the initial-value problem, which is an  extension of some classical results due to Goebel, Rzymowski,  and Bana\'{s}.

\end{abstract}

\keywords{ Representation of  measure of noncompactness; convex analysis; Lebesgue-Bochner measurability; integral inequality; initial valued problem in Banach spaces}

\subjclass[2020]{Primary 46N10, 47H08, 46B50, 34G20, 47H10, 28B25, 46B04}

\maketitle

\section{Introduction}

There are  three goals of this paper: (1) to establish representation theorem of convex  measures of noncompactness (convex MNCs, for simplicity), in particular,  regular MNCs, homogeneous MNCs, sublinear MNCs  and of their generalizations, including convex measures of non-weak compactness (convex MNWCs), of non-super weak compactness (convex MNSWCs), of non-Radon-Nikod\'ym property (convex MNRNPs) and of non-Asplundness (convex MNAs); (2) to establish a class of  basic integral inequalities related to an initial-value problem in Banach spaces; and as their application,   (3) to discuss solvability of the initial-value problem.

The letter $X$ will always be an infinite dimensional real Banach space, and $X^*$ its dual. $B_X$ stands for the closed unit ball of $X$, and $B(x,r)$ for the closed ball centered at $x$ with radius $r$. $\mathscr{B}(X)$ denotes the collection of all nonempty bounded subsets of $X$ endowed with the Hausdorff metric. $\Omega$ stands for the closed unit ball  $B_{X^*}$ of $X^*$ endowed with the norm topology, and $C_b(\Omega)$ is the Banach space of all continuous bounded functions on $\Omega$ endowed with the sup-norm. For a subset $A\subset X$, $\overline{A}$ stands for the norm closure of $A$, and ${\rm co}(A)$ for the convex hull  of $A$.

The main results of this paper consist of the following three parts.

{\bf Part I. \;A representation theorem of convex MNGCs.}   For every  Banach space $X$,  there exist a Banach function space $C(K)$ for some compact Hausdorff space $K$, and  an order-preserving affine 1-Lipschitz mapping $\mathbb T: \mathscr B(X)\rightarrow C(K)^+$ such that  for every `` convex measure of non generalized compactness (convex MNGC)" (in particular, convex MNC, convex MNWC, convex MNSWC, convex MNRNP, and convex MNA) $\mu$ defined on $X$, there is a  continuous  convex function $\digamma$ on the positive cone $V\equiv\mathbb T\Big(\mathscr B(X)\Big)$ , which is monotone increasing in the order of set inclusion and Lipschitzian on each bounded subset of $V$ satisfying
\begin{equation}
\digamma(\mathbb T(B))=\mu(B),\;\forall B\in\mathscr B(X).
\end{equation}
If, in addition, $\mu$ is a sublinear measure of non generalized compactness, then $\digamma$ is a $\mu(B_X)$-Lipschitian sublinear functional on $V$.

{\bf Part II. A class of basic integral inequalities.} For every nonempty subset $G\subset L_1([0,a],X)$ of integrable $X$-valued  functions with $\psi(t)\equiv\sup_{g\in G}\|g(t)\|$  integrable on $[0,a]$ such that the mapping $\mathbb J_G: [0,a]\rightarrow C_b(\Omega)$ defined for $t\in[0,a]$ by
\begin{equation}
\mathbb J_G(t)(\omega)=\sup_{g\in G}\langle\omega,g(t)\rangle\equiv\sigma_{G(t)}(\omega), \;\omega\in\Omega\equiv B_{X^*},
\end{equation}
is strongly (Lebesgue-Bochner) measurable, then for every  convex measure of non generalized compactness (convex MNGC) $\mu$ defined on $X$, we have
\begin{equation}
\mu\Big\{\int_0^\tau G(s)ds\Big\}\leq\frac{1}\tau\int_0^\tau\mu\Big\{\tau G(s)\Big\}ds,\;\forall 0<\tau\leq a;
\end{equation}
in particular, if $\mu$ is a  sublinear measure of non generalized compactness (sublinear MNGC), or, $\tau\leq1$, then
\begin{equation}
\mu\Big\{\int_0^\tau G(s)ds\Big\}\leq\int_0^\tau\mu\Big\{G(s)\Big\}ds.
\end{equation}

{\bf Part III. Solvability of a Cauchy problem.}  As an application of the results mentioned above, we consider solvability of the following initial value problem
\begin{equation} \left\{\begin{array}{cc}
                    x'(t)=f(t, x), & a\geq t>0; \\
                    x(0)=x_0~~~~~~~~
                  \end{array}
\right.
\end{equation}
and give an extension of  a classical solvability result of the problem  due to K. Goebel and W. Rzymowski \cite{goe} (see, also, \cite{rz}), and due to J. Bana\'{s},  K. Goebel \cite{bag}.\\

The study of MNCs and of their applications has continued for 90 years since the first MNC (Kuratowski's MNC in the present terminology) was introduced by  K. Kuratowski \cite{kur} in 1930. It has been shown that the theory of measures of noncompactness  was  used  in  a wide variety of topics in nonlinear analysis. 
Roughly speaking, an MNC $\mu$ is a   nonnegative function defined on the family $\mathscr B(X)$ consisting of all nonempty bounded subsets of a complete metric space, in particular, a Banach space $X$ and satisfies some specific properties such as  non-decreasing monotonicity in the order of  the set inclusion, and the (most important) noncompactness that $\mu(B)=0$ if and only if $B$ is relatively compact in $X$.

The first MNC $\alpha$ was introduced by  K. Kuratowski \cite[1930]{kur} for every $B\in\mathscr B(X)$ of a complete metric space $X$:
\begin{equation}
\alpha(B)=\inf\{d>0: B\subset\cup_{j\in F} E_j\subset X, \;d(E_j)\leq d, F\subset\N,  F^\sharp<\infty\},
\end{equation}
where $d(E_j)$ denotes  the diameter of $E_j$, $F^\sharp$ denotes the cardinality of the set $F\subset\mathbb N$. The earliest successful application of Kuratowski's MNC  was applied in the fixed point theory.
In 1955, G. Darbo \cite{da} extended  the Schauder fixed point theorem to noncompact mappings,  named set-contractive operators. Since then, the study of MNCs and of their applications has become an active research area, and various MNCs have appeared. Among many other MNCs, the Hausdorff MNC $\beta$ is another widely used MNC,   which was introduced by Gohberg,  Gol'den\'{s}shte\'{i}n and Markus \cite{goh} in 1957. It is defined for $B\in\mathscr B(X)$ by
\begin{equation}
\beta(B)=\inf\{r>0: B\subset\cup_{x\in F}B(x,r),\;F\subset X,\;F^\sharp<\infty\},\end{equation}
where $B(x,r)$ denotes the closed ball centered at $x\in X$ with radius $r$.

It is easy to observe that if $\mu$ is either  Kuratowski's MNC $\alpha$ or the Hausdorff MNC $\beta$, then it satisfies the following three conditions.

(1) [{\bf Noncompactness}] $B\in\mathscr B(X),\;\mu(B)=0 \Longleftrightarrow B$ is relatively compact;

(2) [{\bf Monotonicity}] $A, B\in\mathscr B(X)\; {\rm with\;}A\supset B\;\Longrightarrow \mu(A)\geq\mu(B);$

(3) [{\bf Order preserving}] $A, B\in\mathscr B(X) \Longrightarrow\;\mu(A\cup B)=\mu(A)\vee\mu(B).$


\noindent
In particular, if $X$ is a Banach space over the scalar field $\mathbb F$,  then

(4) [{\bf Convexification invariance}] $B\in\mathscr B(X)\Longrightarrow \mu({\rm co}{(B)})= \mu(B)$;

(5) [{\bf Absolute homogeneity}] $B\in\mathscr B(X), k\in\mathbb F \Longrightarrow \mu(kB)= |k|\mu(B)$;

(6) [{\bf Subadditivity}] $A, B\in\mathscr B(X) \Longrightarrow\;\mu(A+B)\leq\mu(A)+\mu(B).$

\begin{definition} Let $X$ be a Banach space and $\mathscr B(X)$ be the family of all nonempty bounded subsets of $X$,  $\mu: \mathscr B(X)\rightarrow \R^+$ be  a real-valued function.

i) (\cite{bag}) $\mu$  is said to be a {\bf regular MNC} on $X$ provided it satisfies all the six properties ((1)-(6)) above.

ii) (\cite{mal}) $\mu$ is called a {\bf homogeneous  MNC} on $X$ if it satisfies (1), (2), (4)-(6).

iii) We say that $\mu$ is a {\bf sublinear MNC} on $X$ if it satisfies Properties (1), (2), (4), (6) and

(7) [{\bf Positive homogeneity}] $B\in\mathscr B(X), k\geq0 \Longrightarrow \mu(kB)= k\mu(B)$.

iv)  $\mu$ is a {\bf convex MNC} on $X$ if it satisfies Properties (1), (2), (4) and

(8) [{\bf Convexity}] $\forall A, B\in \mathscr B(X)\;{\rm and\;}0\leq\lambda\leq1,$ \[\mu(\lambda A+(1-\lambda)B)\leq\lambda\mu(A)+(1-\lambda)\mu(B).\]
\end{definition}
\begin{remark}
 Clearly, a regular MNC is a homogeneous MNC, a homogeneous MNC is a sublinear MNC, and a convex MNC.
 The notion of homogeneous MNC is called `` sublinear full MNC" by Bana\'{s} and Geobel \cite{bag}.

\end{remark}

In this paper, we always assume that the space $X$ in question is a real Banach space.  We should mention here that  a Banach space setting for the study of MNGCs would lose no generality, because that every metric space is isometric to a subset of a Banach space \cite[Lemma 1.1]{ben}. Besides, the concept is easily to be generalized by Banach space theory, for example, it has been generalized in various ways such as MNWC (see, for instance, \cite{cas},\cite{de},\cite{ha}, \cite{ha1});  of MNRNP \cite{z}; or, more general, measures of non-property $\mathfrak{D}$ \cite{ccs}.\\

Recall that $\mathscr B(X)$ denotes the collection of all nonempty bounded subsets of a Banach space $X$ endowed with the Hausdorff metric $d_H$ defined for $A, B\in\mathscr B(X)$ by
\[d_H(A,B)=\inf\{d>0: A\subset B+dB_X, B\subset A+dB_X\},\]
where $B_X$ is the closed unit ball of $X$.

We use $\mathscr C(X) (\subset\mathscr B(X)) $ to denote the collection of all nonempty bounded closed convex subsets of a Banach space $X$ (endowed with the Hausdorff metric) and with the following set addition $\bigoplus$:
\[A\bigoplus B=\overline{A+B},\;\;\forall\;A, B\in\mathscr C(X),\]  and the usual scalar multiplication of sets:
\[\lambda A=\{ka: a\in A\},\;\forall\;A\in\mathscr C(X)\;{\rm and\;}k\in\R.\]
If we put \[\||A\||=\sup_{a\in A}\|a\|,\;\;\forall\;A\in\mathscr C(X),\]
then it becomes a complete normed semigroup  with respect to  the Hausdorff metric \cite{ccs}. (See, also, Section 2 in detail.)

\begin{definition}[\cite{ccs}] Let $X$ be a Banach space and $\mathscr D\equiv\mathscr D(X)$ be a closed subsemigroup of $\mathscr C(X)$. It is said to be fundamental provided it satisfies

(1) $\bigcup\{D\in \mathscr{D}\}=X$;

(2) $A,B\in \mathscr{D} \;{\rm entails }\;{\rm\overline{co}}(\pm A\cup\pm B)\in \mathscr{D}$;

(3) $\emptyset\neq B\subset D\in  \mathscr{D}  \;{\rm implies }\;{\rm\overline{co}}(B)\in \mathscr D$.
\end{definition}

There are many possibilities. But the following are most interesting to us:
i) $\mathscr D=\mathscr K(X)$, the subsemigroup of all nonempty  compact convex subsets of $X$; ii) $\mathscr D=\mathscr W(X)$,  all nonempty  weakly compact convex subsets of $X$; iii) $\mathscr D=\mathscr R(X)$,  all nonempty closed bounded convex subsets $D$ of $X$  admitting the Radon-Nikod\'ym property; iv) $\mathscr D=\mathscr A(X)$, all nonempty bounded closed  convex Asplund subsets $D$ of $X$; and v) $\mathscr D={\rm sup\text{-}}\mathscr W(X)$, all nonempty convex super weakly compact subsets $D$ of $X$.

\begin{definition} Let $X$ be a Banach space, $\mathscr D\equiv\mathscr D(X)$ be a  fundamental sub-semigroup of $\mathscr C(X)$,  and $\mu: \mathscr B(X)\rightarrow \R^+$ be a real-valued function. It is said to be a regular measure of non-property $\mathfrak D$ (regular MNP$\mathfrak D$) provided

(1) [{\bf Non-property $\mathfrak D$}] $B\in\mathscr B(X),\;\mu(B)=0 \Longleftrightarrow {\rm\overline{co}}(B)\in\mathscr D$;

(2) [{\bf Monotonicity}] $A, B\in\mathscr B(X)\; {\rm with\;}A\supset B\;\Longrightarrow \mu(A)\geq\mu(B);$

(3) [{\bf Order preserving}] $A, B\in\mathscr B(X) \Longrightarrow\;\mu(A\cup B)=\mu(A)\vee\mu(B).$


(4) [{\bf Convexification invariance}] $B\in\mathscr B(X)\Longrightarrow \mu({\rm co}{(B)})= \mu(B)$;

(5) [{\bf Absolute homogeneity}] $B\in\mathscr B(X), k\in\mathbb F \Longrightarrow \mu(kB)= |k|\mu(B)$;

(6) [{\bf Subadditivety}] $A, B\in\mathscr B(X) \Longrightarrow\;\mu(A+B)\leq\mu(A)+\mu(B).$

In particular, a regular MNP$\mathfrak D$ is just a regular MNC if $\mathscr D=\mathscr K$; and a regular MNWC if $\mathscr D=\mathscr W$.
\end{definition}
\begin{definition} Let $X$ be a Banach space, $\mathscr D\equiv\mathscr D(X)$ be a  fundamental subsemigroup of $\mathscr C(X)$ and $\mu: \mathscr B(X)\rightarrow \R^+$ be a real-valued function.

i) $\mu$ is called a {\bf homogeneous measure of non\text{-}property $\mathfrak D$} (homogeneous MNP$\mathfrak D$) on $X$ if it satisfies (1), (2), (4)-(6) in Definition 1.4.

ii) $\mu$ is called a {\bf sublinear measure of non\text{-}property $\mathfrak D$} (sublinear MNP$\mathfrak D$) on $X$ if it satisfies (1), (2), (4), (6) in Definition 1.4 and

(7) [{\bf Positive homogeneity}] $B\in\mathscr B(X), k\geq0 \Longrightarrow \mu(kB)= k\mu(B)$.

iii) We say that $\mu$ is a {\bf convex measure of non\text{-}property $\mathfrak D$} (convex MNP$\mathfrak D$) on $X$ if it satisfies Properties (1), (2), (4) in Definition 1.4,  and

(8) [{\bf Convexity}] $\mu(\lambda A+(1-\lambda)B)\leq\lambda\mu(A)+(1-\lambda)\mu(B),\;\forall A, B\in \mathscr B(X)\;{\rm and\;}0\leq\lambda\leq1.$ Therefore, a sublinear measure of noncompactness is a convex one.

In particular, a convex (resp., homogeneous, sublinear) MNP$\mathfrak D$ is just a convex (resp., homogeneous, sublinear) MNC if $\mathscr D=\mathscr K$; and a convex (resp., homogeneous,  sublinear) MNWC if $\mathscr D=\mathscr W$.
\end{definition}

A great deal of effort has been expended in studying representation problem of some concrete MNCs, especially, the Hausdorff MNC
on typical Banach spaces such as $C(K)$ and $L_p$. See, for example, \cite{ak, ay, bag, bam}.  Nevertheless,  the representation problem of abstract MNCs on a general Banach space has never been studied before and poses significant difficulties. With the help of a three-time-order-preserving embedding theorem \cite{ccs} established in 2018 (which will be introduced in the next section) and by arguments in convex analysis, especially, in subdifferentiability of convex functions defined on convex sets with empty interiors, in the first part of this paper, we show the representation theorem previously mentioned in the first section for convex MNCs.\\

Many mathematicians have made great efforts to expect the following type of integral inequalities with various assumptions:
\begin{equation}\mu\Big\{\int_0^ax_n(\omega)d\omega: n\in\mathbb N\Big\}\leq \int_0^a\mu\Big\{x_n(\omega):n\in\mathbb N\Big\}d\omega,\end{equation}
where $\{x_n\}$ is a sequence of continuous $X$-valued functions in $C([0,a], X)$, and  $\mu$ is the Hausdorff MNC, or, the Kuratowski's MNC. Such type of integral inequalities arise from the initial value problem (1.5) in Banach spaces.
Among the many  extra conditions which are sufficient for the solvability of the equation, one of the most important types is under hypothesises in terms of MNCs. Indeed, a standard proof for the existence of solutions employs the Darbo fixed point theorem in connection with  the following Picard-Lindel\"{o}f operator
\begin{equation}Ax(t)=x_0+\int_0^t f(s,x(s))ds, \ \  x\in B,\end{equation}
 where $B$ is a bounded subset of $C([0,a], X)$, and $f$ is called the Carath\'{e}odory function. One wants to obtain that if the  Carath\'{e}odory function satisfies that $f(t,\cdot)$ is condensing for all $t\in[0,a]$, then the operator $A: C([0,a], X)\rightarrow  C([0,a], X)$ defined by (1.9) is also condensing. More precisely, one may expect that for  $\{x_n\}\subset B$,
\begin{equation}\mu\Big\{\int_0^t f(s,x_n(s))ds: n\geq1\Big\}\leq \int_0^t \mu\Big\{f(s,x_n(s)): n\geq1\Big\}ds\end{equation}
 Once we have established an estimate of the form (1.10), there is not much left to do to prove the existence of solutions for (1.5).\\

 Some remarkable contributions have been made by  K. Goebel and  W. Rzymowdski \cite{goe},  J. Bana\'{s} and K. Goebel \cite{bag}, M\"{o}nch\cite{mo},  M\"{o}nch and G.-F. von Harten \cite{mo2}, H.P. Heinz \cite{hei},  M. Kunze and G. Schl\"{u}chtermann \cite{kun}. Nevertheless,
it still comes as a surprise that the estimate (1.8) has been proved only under some very restrictive assumptions in the existing literatures.  While a number of counterexamples constructed by Heinz \cite{hei} show that  it is quite complicated and  poses significant difficulties to present appropriate hypothesises to guarantee (1.8).
In 1970, Goebel and Rzymowdski \cite{goe} first showed that (1.8) holds for the Hausdorff MNC $\beta$, with the assumptions that $\{x_n\}\subset C([0,a],X)$  is  bounded and equi-continuous. In 1980, Bana\'{s} and Goebel  \cite{bag} further proved (1.8) holds again under the same assumptions but one can substitute a  sublinear  MNC  $\mu$  for $\beta$.   
M\"{o}nch \cite[1980]{mo}, M\"{o}nch and G.-F. von Harten \cite[1982]{mo2} proved (1.8) with the  assumptions a)  $\mu=\beta$,  b) $X$ is a weakly compactly generated space (in particular, a separable Banach space) and c)  $\{x_n\}\subset C([0,a];X)$ and there is $\psi\in L_1([0,a])$ such that $\sup_n\|x_n(t)\|\leq\psi(t)$ a.e. In 1998, Kunze and  Schl\"{u}chtermann \cite{kun} showed that (1.8) holds for  a  Grothendieck measure $\mu$ assuming that $X$ is strongly generated by a Grothendieck class.  But for an arbitrary Banach space, one has to insert the factor 2 in the right-hand side of (1.10),  i.e.
\begin{equation}\nonumber\beta\Big\{\int_0^t x_n(s)ds:n\geq 1\Big\}\leq 2\int_0^t \beta\Big\{x_n(s):n\geq 1\Big\}ds.\end{equation}
See, for instance, \cite{bo}.

In the second part of this paper, applying the representation theorem established in the first part, we show the result presented in Part II.

We also show that the mapping $\mathbb J_G$ is always weakly measurable if $G\subset L_1(I,X)$ is separable and the sup-envelop of $\{\|g\|: g\in G\}$ is again integrable. It is strongly measurable if, in addition, $G$ satisfies one of the following three conditions: i) $G\subset C([0,a],X)$ is equi-continuous;  ii) $G$ is separable and equi-regulated; and iii) $G$ is uniformly measurable.\\

The solvability question of the initial value problem (1.5) in infinite dimensional Banach spaces is important because  Peano's existence theorem of the problem (1.5) in finite dimensional spaces  is not valid in infinite dimensional Banach spaces. A number of mathematicians have made a series of contributions to the problem.
 In 1970, Goebel and Rzymowdski \cite{goe} using
the fixed point theorem proved that (1.5) has a solution with the following assumptions:
a) $f$ is bounded and uniformly continuous on $[0,a]\times B(x_0,r)$; and
b) for the Hausdorff MNC  $\beta$ defined on $X$,  $\beta(f(t,B))\leq w(t,\beta(B))$ for almost all $t\in [0,a]$, where $w$ is a Kamke function.
In 1977, Deimling \cite{dei} by means of approximate solutions proved that (1.5) has a solution under the following assumptions: a) $f$ is bounded and uniformly continuous on $[0,a]\times B(x_0,r)$; and c) for Kuratowski's MNC $\alpha$ defined on $X$, $\alpha(f(t,B))\leq L\alpha(B)$ for some $L>0$ and for all $t\in [0,a]$. In 1980,  Bana\'{s} and Goebel  \cite{bag} showed that (1.5) has a solution assuming a) $f$ is bounded and uniformly continuous on $[0,a]\times B(x_0,r)$; and d) $\mu(f(t,B))\leq w(t,\mu(B))$ for almost all $t\in [0,a]$, where $w$ is a Kamke function and $\mu$ is a (symmetric) sublinear MNC.   Motivated by Deimling \cite{dei}, M\"{o}nch and G.-F. von Harten \cite[1982]{mo2} proved that (1.5) has a solution under the following assumptions that $X$ is a weakly compactly generated space, $f$ is continuous, and  there is a Kamke function $w$ such that $\beta(f(t,B))\leq w(t,\beta(B))$ for almost all $t\in [0,a]$.

As an application of the representation theorem and the integral inequalities of presented in Parts I and II,  in the last part of this paper, we consider solvability of the  initial value problem (1.5),  and extend some classical results due to Goebel and Rzymowski \cite{goe}, Bana\'{s} and Goebel \cite{bag}, and Deimling \cite{dei}.

\section{Preliminaries}
In this section, we will introduce the three-time order preserving embedding theorem established in \cite{ccs}. This theorem will play an essential rule in the study of representation problem of measures of non generalized compactness of this paper. It was proven  by this theorem that``every infinite dimensional Banach space admits inequivalent regular measures of noncompactness" which is an affirmative answer to a question  proposed by Goebel in 1978 (see, \cite{ab}), also by  Mallet-Paret and Nussbaum in 2011 \cite{mal}. All notions and results presented in this section can be found in \cite{ccs}.

\begin{definition}Let $G$ be an abelian semigroup and let $\mathbb F\in\{\mathbb R,\mathbb C\}.$ $G$ is said to be a module if there are two operations $(x,y)\in G\times G\rightarrow x+y\in G$,
and $(\alpha, x)\in (\mathbb F\times G)\rightarrow \alpha x\in G$ satisfying
$$(\lambda\mu)g=\lambda(\mu g),\;\forall\lambda,\mu\in\mathbb R\;and\;g\in G;$$
$$\lambda(g_1+g_2)=\lambda g_1+\lambda g_2,\;\forall\lambda\in\mathbb F\;and\;g_1,g_2\in G;$$
and
$$1g=g\;\;and\;\;0g=0\;\;\forall g\in G. $$

A module $G$ endowed with a norm is called a normed semigroup.
\end{definition}
 For a (real) Banach space $X$ we denote by $\mathscr C(X)$ (or, simply, $\mathscr C$) the collection of all nonempty bounded closed convex sets of $X$ with respect to the following  linear operations:

(a)\;\;  $A\oplus B=\overline{\{a+b:a\in A, b\in B\}},$ for all $A, B\in\mathscr C$;

(b)\;\;  $kC=\{kc: c\in C\}$ for all $k\in\mathbb F$ and $C\in\mathscr C$.\\
We further define the following function $\||\cdot\||$  for $C\in\mathscr C$ by
\begin{equation}
\||C\||=\sup_{c\in C}\|c\|.
\end{equation}
Then it is easy to check that $\||\cdot\||$ is a norm on $\mathscr C$, i.e. it satisfies

i) $\||C\||\geq0 $ for all $C\in\mathscr C$,  and $\||C\||=0\Longleftrightarrow C=\{0\}$;

ii) $\||kC\||=|k|\cdot\||C\||$, for all $C\in\mathscr C$ and $k\in\mathbb R$;

iii) $\||A\oplus B\||\leq\||A\||+\||B\||$, for all $A,\;B\in\mathscr C$.\\
Therefore, $(\mathscr C,\||\cdot\||)$ is a normed (real) semigroup.

If we equip the normed semigroup  $\mathscr C$  with the  Hausdorff metric $d_H$:
\begin{equation}
d_H(A,B)=\inf\big\{r>0: A\subset B+rB_X,\;B\subset A+rB_X\big\}, \;A, B\in\mathscr C,
\end{equation}
then \begin{equation}\||C\||=d_H(\{0\},C),\;\;\forall  C\in\mathscr C.\end{equation}

We also use $\mathscr{K}(X)$  to denote the (complete) sub-semigroup of $\mathscr{C}(X)$ consisting of all nonempty compact  convex subsets.
\begin{definition}
Let $\Gamma_1, \Gamma_2$ be two  partially ordered sets, and $f:\Gamma_1\rightarrow \Gamma_2$ be a mapping.

(i) Then $f$  is said to be (resp., fully) order-preserving provided it is bijective
and satisfying  $f(x)\geq f(y)\in\Gamma_2$  if (resp., and only if) $x\geq y\in\Gamma_1$.

(ii) If both  $\Gamma_1$  and $\Gamma_2$ are modules, then  we call the mapping $f$ affine  if it satisfies $f(ax+by)=af(x)+bf(y)$ for all $a, b\geq0$ and  $x, y\in\Gamma_1$.
\end{definition}

For each nonempty subset $A\subset X$, we denote by $\sigma_A$, the support function of $A$ restricted to $\Omega\equiv  B_{X^*}$, i.e.
\begin{equation}
\sigma_A(x^*)=\sup_{a\in A}\langle x^*,a\rangle, \;\;{\rm for}\;x^*\in\Omega.
\end{equation}
 Let $\Omega$ be endowed with the norm topology of $X^*$, and
 $\mathfrak S(\Omega)$ the wedge of all continuous and bounded-$w^*\text{-}$ lower semicontinuous sublinear functions on $X^*$ but restricted to $\Omega$. Then it is a closed cone of
 $C_b(\Omega)$, the Banach space of all  continuous functions bounded on $\Omega$ endowed with the sup-norm. Let $J: \mathscr C(X)\rightarrow \mathfrak S(\Omega)$ be defined by
  \begin{equation}
  J(C)=\sigma_C,\;\;\forall\;C\in\mathscr C(X).
  \end{equation}
If we order the normed semigroup $\mathscr C(X)$ by set inclusion, i.e. $A\geq B$ if and only if $A\supset B$,
then we have the following property.
 \begin{theorem}\cite[Th.2.3]{ccs}
 Let $X$ be a Banach space. Then

 (i) $\mathscr C(X)$ is complete with respect to the Hausdorff metric, and

 (ii) $J: \mathscr C(X)\rightarrow\mathfrak S(\Omega)$ is a fully order-preserving  affine surjective isometry, i.e. for all $A, B\in\mathscr C(X)$
 \begin{equation}d_H(A,B)=\|J(A)-J(B)\|\equiv\sup_{x^*\in\Omega}|\sigma_A(x^*)-\sigma_B(x^*)|.\end{equation}
\end{theorem}

\begin{lemma}\cite[Lemma 2.5]{ccs}
Suppose that $X$ is a Banach space, and $\mathscr D=\mathscr D(X)$  is a  sub-semigroup of $\mathscr C=\mathscr C(X)$. Then $\mathscr D$ is closed if and only if  every $C\in\mathscr C$ satisfying the condition that for every $\varepsilon>0$ there exists $D\in\mathscr D(X)$ so that $C\subset D+\varepsilon B_X$ is in $\mathscr D$.
 \end{lemma}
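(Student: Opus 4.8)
The plan is to prove the two implications separately, working inside the complete metric space $(\mathscr C(X), d_H)$ (completeness is Theorem 2.3(i)) and, where the one-sided containment has to be manipulated, passing through the fully order-preserving affine isometry $J$ of Theorem 2.3(ii) between bounded closed convex sets and their support functions on $\Omega$.

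For the easy implication ($\Leftarrow$), I would assume that every $C\in\mathscr C$ meeting the domination condition already lies in $\mathscr D$, and deduce closedness. Take any $C\in\mathscr C$ in the $d_H$-closure of $\mathscr D$, together with $D_n\in\mathscr D$ such that $d_H(C,D_n)\to 0$. Since $d_H(C,D_n)<\varepsilon$ forces in particular $C\subset D_n+\varepsilon B_X$, the set $C$ satisfies the hypothesis (for each $\varepsilon$ select the corresponding $D_n$), hence $C\in\mathscr D$. Thus $\mathscr D$ contains its limit points and is closed; here completeness of $\mathscr C$ is what guarantees that the limit $C$ is again a genuine nonempty bounded closed convex set, so the argument does not leave $\mathscr C$.

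For the converse ($\Rightarrow$), I would assume $\mathscr D$ closed and let $C\in\mathscr C$ satisfy: for every $\varepsilon>0$ there is $D\in\mathscr D$ with $C\subset D+\varepsilon B_X$. It suffices to produce $E_n\in\mathscr D$ with $d_H(E_n,C)\to 0$, for then $C\in\overline{\mathscr D}=\mathscr D$. First choose $D_n\in\mathscr D$ with $C\subset D_n+2^{-n}B_X$, which already supplies one half of the Hausdorff estimate. The reverse half, $E_n\subset C+\eta_n B_X$, is the crux. I would translate the situation through $J$, under which $C\subset D+\varepsilon B_X$ becomes $\sigma_C\le\sigma_D+\varepsilon$ on $\Omega$ and set inclusion becomes the pointwise order $\sigma_A\le\sigma_B$, and use that $J(\mathscr D)$ is a sub-semigroup of the cone $\mathfrak S(\Omega)$ stable under the addition inherited from $\oplus$. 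The aim is to trim each dominating $D_n$ down to an $E_n\in\mathscr D$ that no longer overshoots $C$ by more than $\eta_n$, by combining $D_n$ with sets that approximate $C$ from inside and checking that the combination remains in $\mathscr D$ via the semigroup operation (and, in the fundamental examples, via the downward-closure property that $B\subset D\in\mathscr D$ implies $\overline{\mathrm{co}}(B)\in\mathscr D$).

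The hard part will be exactly this trimming step in the converse: the bare one-sided domination $C\subset D+\varepsilon B_X$ does not by itself bound how far $D$ protrudes past $C$, so closedness of $\mathscr D$ can be invoked only after that protrusion has been removed while staying inside $\mathscr D$. The semigroup and order structure of $\mathscr D$, read off through the isometric embedding $J$ into $\mathfrak S(\Omega)$, is what makes the removal possible, and I expect the $J$-translation to reduce the geometric trimming to a cleaner statement about dominated elements of the closed sub-semigroup $J(\mathscr D)$ of support functions.
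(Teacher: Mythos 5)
Your backward implication (approximation--stability of $\mathscr D$ implies $d_H$-closedness) is fine; that is the routine half, and the paper itself gives no proof here since the lemma is quoted from \cite{ccs}. The genuine gap is in the forward implication, where your write-up stops exactly where a proof would have to begin: you announce that each dominating set $D_n$ must be ``trimmed'' to some $E_n\in\mathscr D$ with $d_H(E_n,C)\to 0$, but you never construct $E_n$, and the mechanism you nominate for the construction --- the additive semigroup and order structure transported by $J$ into $\mathfrak S(\Omega)$ --- cannot do the job. For a bare sub-semigroup the implication is in fact false: take $\mathscr D=\{rB_X:\ r\ge 0\}$, which is stable under $\oplus$ and scalar multiplication and is $d_H$-closed, and take $C=[0,x]$ for any $x\neq 0$ in $B_X$; then $C\subset B_X+\varepsilon B_X$ for every $\varepsilon>0$ with $B_X\in\mathscr D$, yet $C\notin\mathscr D$. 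So no manipulation of $\oplus$, scalar multiples and the pointwise order of support functions can remove the overshoot; what makes the trimming possible is the hereditarity property of Definition 1.3(3) (nonempty $B\subset D\in\mathscr D$ implies $\overline{\mathrm{co}}(B)\in\mathscr D$), which all the sub-semigroups actually used in the paper enjoy. You mention this property only as a parenthetical aside ``in the fundamental examples,'' whereas it is the entire content of the hard direction; without it the statement you are proving is not true.

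With hereditarity the trimming is short, purely geometric, and does not need $J$ at all: given $\varepsilon>0$, choose $D\in\mathscr D$ with $C\subset D+\varepsilon B_X$ and set $E=\overline{\mathrm{co}}\{x\in D:\ \mathrm{dist}(x,C)\le\varepsilon\}$. The generating set is nonempty (each $c\in C$ can be written $c=d+\varepsilon b$ with $d\in D$, and that $d$ lies in it), so $E\in\mathscr D$ by hereditarity; the same decomposition gives $C\subset E+\varepsilon B_X$, while $E\subset\overline{C+\varepsilon B_X}\subset C+2\varepsilon B_X$, whence $d_H(C,E)\le 2\varepsilon$ and $C\in\overline{\mathscr D}=\mathscr D$ by the assumed closedness. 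You should either carry out this construction explicitly (adding the hereditarity hypothesis, or restricting to fundamental sub-semigroups) or note that the lemma as transcribed for an arbitrary sub-semigroup cannot be proved because it fails; as it stands, your forward direction is a plan whose key step is both missing and pointed in an unworkable direction.
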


All notions related to Banach lattices and abstract $M$ spaces are the same as in Lindenstrauss and Tzafriri \cite{lin}.  Recall that a partially ordered real Banach space $Z$ is called a Banach lattice provided

(i) $x\leq y$ implies $x+z\leq y+z$ for all $x,y,z\in Z$;

(ii) $ax\geq0,$ for all $x\geq0$ in $Z$ and $a\in\mathbb R^+$;

(iii) both $x\vee y$ and $x\wedge y$ exist for all $x,y\in Z$;

(iv) $\|x\|\leq\|y\|$ whenever $|x|\equiv x\vee{-x}\leq y\vee{-y}\equiv|y|$.

It follows from iv) and
\begin{equation}
|x-y|=|x\vee z-y\vee z|+|x\wedge z-y\wedge z|, \;\;{\rm for\;all\;}x,y,z\in Z,
\end{equation}
that the lattice operations are norm continuous.

By a sublattice of a Banach lattice $Z$ we mean a linear subspace $Y$ of $Z$ so that $x\vee y$ (and also $x\wedge y=x+y-x\vee y$) belongs to $Y$ whenever $x,y\in Y$. A lattice ideal $Y$ in $Z$ is a sublattice of $Z$ satisfying that $|z|\leq|y|$ for $z\in Z$ and for some $y\in Y$ implies $z\in Y$.

In the following discussion, unless stated explicitly otherwise, we always assume the lattice operations $\vee$ and $\wedge$ in a real-valued function space $C(K)$ are defined for $x,y\in C(K)$ and $k\in K$ by \begin{equation}
x(k)\vee y(k)=\max\{x(k), y(k)\}, \;\;x(k)\wedge  y(k)=\min\{x(k), y(k)\}.
\end{equation}
Given a fundamental sub-semigroup  $\mathscr D$ of $\mathscr C(X)$ (Definition 1.3), let $J$ be defined as (2.5), and put
 $E_\mathscr D=\overline{J\mathscr D-J\mathscr D}$.
\begin{theorem}\cite[Th. 3.1]{ccs}
Let $X$ be a Banach space. Then for any fundamental sub-semigroup  $\mathscr D$ of $\mathscr C(X)$, the space $E_\mathscr D$ is a Banach lattice.
\end{theorem}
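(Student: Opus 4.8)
The plan is to realize $E_{\mathscr D}$ as a norm-closed \emph{sublattice} of the ambient Banach lattice $C_b(\Omega)$, taken with the pointwise order and the pointwise operations $\vee,\wedge$ of (2.9); once this is done the conclusion is immediate, since a norm-closed sublattice of a Banach lattice is again a Banach lattice, inheriting the lattice norm (property (iv) and the norm-continuity of the lattice operations transfer verbatim).

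First I would record the two structural facts that drive the argument. By Theorem 2.3 the map $J$ is affine, so $J(A\oplus B)=J(A)+J(B)$ and $J(\lambda A)=\lambda J(A)$ for $\lambda\ge 0$; since $\mathscr D$ is a sub-semigroup of $\mathscr C(X)$ closed under $\oplus$ and scalar multiplication, the difference set $J\mathscr D-J\mathscr D$ is already a \emph{linear subspace} of $C_b(\Omega)$ (a sum of differences is a difference because $J(A)+J(C)=J(A\oplus C)$, and negative scalars are absorbed by interchanging the two summands). Hence $E_{\mathscr D}=\overline{J\mathscr D-J\mathscr D}$ is a closed linear subspace, so a Banach space. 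The second fact is the support-function identity
\[
\sigma_{\overline{\mathrm{co}}(A\cup B)}=\sigma_{A\cup B}=\sigma_A\vee\sigma_B,\qquad A,B\in\mathscr C(X),
\]
valid because a support function is unchanged under passage to the closed convex hull and $\sigma_{A\cup B}=\max(\sigma_A,\sigma_B)$ pointwise; restricted to $\Omega$ it reads $J(\overline{\mathrm{co}}(A\cup B))=J(A)\vee J(B)$.

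The heart of the proof is to show $E_{\mathscr D}$ is stable under the positive-part map $f\mapsto f^{+}=f\vee 0$; granting this, stability under $\vee$ and $\wedge$ follows from $f\vee g=g+(f-g)^{+}$ and $f\wedge g=f+g-f\vee g$ together with linearity. So let $f=\sigma_A-\sigma_B$ with $A,B\in\mathscr D$ be a generic element of $J\mathscr D-J\mathscr D$. Using $\max(s-t,0)=\max(s,t)-t$ pointwise on $\Omega$ and the identity above,
\[
f^{+}=(\sigma_A\vee\sigma_B)-\sigma_B=\sigma_{\overline{\mathrm{co}}(A\cup B)}-\sigma_B=J(\overline{\mathrm{co}}(A\cup B))-J(B).
\]
This is the single place where \emph{fundamental}-ness of $\mathscr D$ enters: by axiom (2) of Definition 1.3, $D_0\equiv\overline{\mathrm{co}}(\pm A\cup\pm B)\in\mathscr D$, and since $A\cup B\subset D_0$, axiom (3) yields $\overline{\mathrm{co}}(A\cup B)\in\mathscr D$. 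Therefore $f^{+}\in J\mathscr D-J\mathscr D$, i.e. $J\mathscr D-J\mathscr D$ is closed under $f\mapsto f^{+}$.

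Finally I would pass to the closure. The positive-part map is $1$-Lipschitz on $C_b(\Omega)$, since $|s^{+}-t^{+}|\le|s-t|$ for reals gives $\|f^{+}-g^{+}\|_\infty\le\|f-g\|_\infty$. Given $f\in E_{\mathscr D}$, pick $f_n\in J\mathscr D-J\mathscr D$ with $f_n\to f$; then $f_n^{+}\in J\mathscr D-J\mathscr D\subset E_{\mathscr D}$ and $f_n^{+}\to f^{+}$, so $f^{+}\in E_{\mathscr D}$ by closedness. Thus $E_{\mathscr D}$ is a norm-closed sublattice of $C_b(\Omega)$, hence a Banach lattice. I expect the only delicate point to be the correct deployment of axioms (2) and (3) to return $\overline{\mathrm{co}}(A\cup B)$ to $\mathscr D$; everything else is soft functional-analytic bookkeeping. (One may add that $C_b(\Omega)$ is an AM-space with unit, so $E_{\mathscr D}$ is in fact lattice-isometric to some $C(K)$, which is what the later parts of the paper exploit.)
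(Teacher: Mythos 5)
This theorem is quoted in the paper directly from \cite{ccs} without a reproduced proof, so there is no internal argument to compare against; your proof is correct and follows what is essentially the route of the cited source: realize $E_{\mathscr D}$ as a norm-closed sublattice of $C_b(\Omega)$ via the identity $\sigma_A\vee\sigma_B=\sigma_{\overline{\rm co}(A\cup B)}$, invoke axioms (2)--(3) of Definition 1.3 exactly where you do to keep $\overline{\rm co}(A\cup B)$ inside $\mathscr D$, and pass to the closure using the $1$-Lipschitz continuity of $f\mapsto f^{+}$. The one inaccuracy is the closing aside: for general $\mathscr D$ (e.g.\ $\mathscr D=\mathscr K$ with $X$ infinite dimensional) the natural order unit $\sigma_{B_X}$ does not belong to $E_{\mathscr D}$, so one only obtains a lattice isometry onto a \emph{sublattice} of some $C(K)$ (as in Theorem 2.6(ii)), not onto a full $C(K)$ --- but that remark plays no role in your argument.
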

\begin{theorem}\cite[Th. 3.2]{ccs}
Suppose that $X$ is a Banach space. Then

(i) $E_\mathscr K=\overline{J\mathscr K-J\mathscr K}$ is a lattice ideal of the Banach lattice $E_\mathscr C=\overline{J\mathscr C-J\mathscr C}$;

(ii) The quotient space $E_\mathscr C/E_\mathscr K$ is an abstract $M$ space. Therefore, there exist  a Banach function space $C(K)$  for some compact Hausdorff space $K$ and an order isometry $T$ from $E_\mathscr C/E_\mathscr K$  onto a sublattice of a $C(K)$.

(iii) $TQJ\mathscr C$ is a closed cone contained in the positive cone $C(K)^+$ of $C(K)$ , where $Q: E_\mathscr C\rightarrow E_\mathscr C/E_\mathscr K$ is the quotient mapping and $T:E_\mathscr C/E_\mathscr K\rightarrow C(K)$ is the corresponding order isometry.
\end{theorem}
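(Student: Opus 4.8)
The plan is to carry out everything inside the AM-space $C_b(\Omega)$, transporting the set-theoretic data through the support-function isometry $J$ of the embedding theorem. Because $J$ is a fully order-preserving affine isometry onto the cone $\mathfrak S(\Omega)$, and because $\sigma_A\vee\sigma_B=\sigma_{\overline{\mathrm{co}}(A\cup B)}$ while $\sigma_A+\sigma_B=\sigma_{A\oplus B}$, the linear span $J\mathscr C-J\mathscr C$ is stable under the pointwise operations $\vee,\wedge$ of $C_b(\Omega)$; indeed $\sigma_A\wedge\sigma_B=\sigma_{A\oplus B}-\sigma_{\overline{\mathrm{co}}(A\cup B)}$ lies again in $J\mathscr C-J\mathscr C$. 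The same computation inside $\mathscr K$ is legitimate, since $\overline{\mathrm{co}}(K_1\cup K_2)$ and $K_1\oplus K_2$ stay compact. Taking sup-norm closures, both $E_\mathscr C$ and $E_\mathscr K$ are closed sublattices of $C_b(\Omega)$, in accordance with the Banach-lattice theorem, and $E_\mathscr K\subseteq E_\mathscr C$. This settles the sublattice half of (i).

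The ideal half of (i) is the crux of the whole theorem, and I expect it to be the main obstacle. The target is the implication: $z\in E_\mathscr C$, $y\in E_\mathscr K$ and $|z|\le|y|$ imply $z\in E_\mathscr K$. I would first look for an intrinsic description of $E_\mathscr K$ inside $E_\mathscr C$. Every element of $E_\mathscr C$ is the restriction to $\Omega$ of a positively homogeneous function on $X^*$ vanishing at the origin, and for $K\in\mathscr C$ the support function $\sigma_K$ is weak$^*$-continuous on the weak$^*$-compact set $\Omega$ precisely when $K$ is compact; so weak$^*$-regularity on $\Omega$ is the natural candidate for membership in $E_\mathscr K$. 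The genuine difficulty is that lattice domination $|z|\le|y|$ does not by itself transmit such regularity, so the argument cannot be purely order-theoretic: it must use the difference-of-sublinear structure of $E_\mathscr C$ together with the fundamental-semigroup axioms (Definition 1.3) and the closed-subsemigroup lemma, reducing membership to an approximation of $z$ by differences $\sigma_{K_n}-\sigma_{K_n'}$ with $K_n,K_n'\in\mathscr K$ that is controlled, uniformly on $\Omega$, by the compact data underlying $y$, and then invoking completeness of $(\mathscr C,d_H)$ to pass to the limit. Making this control quantitative is where the real work lies.

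Granting (i), part (ii) is essentially formal. The space $C_b(\Omega)$ with the sup-norm is an AM-space, since $\|f\vee g\|=\max(\|f\|,\|g\|)$ for $f,g\ge0$; a closed sublattice inherits this, so $E_\mathscr C$ is an AM-space, and the quotient of an AM-space by a closed lattice ideal is again an AM-space, the M-identity descending to the quotient norm. Thus $E_\mathscr C/E_\mathscr K$ is an abstract $M$-space, and the Kakutani--Bohnenblust--Krein representation produces a compact Hausdorff $K$ and an order isometry $T$ of $E_\mathscr C/E_\mathscr K$ onto a closed sublattice of $C(K)$, as claimed.

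For part (iii), that $V:=TQJ\mathscr C$ is a cone is immediate, because $J\mathscr C$ is closed under $\oplus$ and nonnegative scalars while $Q$ and $T$ are additive and positively homogeneous. To see $V\subseteq C(K)^+$, fix $C\in\mathscr C$ and any $c_0\in C$: then $\sigma_C-\langle\,\cdot\,,c_0\rangle\ge0$ pointwise on $\Omega$ and $\langle\,\cdot\,,c_0\rangle=\sigma_{\{c_0\}}\in J\mathscr K\subseteq E_\mathscr K$, so $Q\sigma_C\ge0$ and hence $TQ\sigma_C\in C(K)^+$. The remaining point, closedness of $V$, is the second delicate step: since $T$ is an isometric embedding it suffices to show $QJ\mathscr C$ is closed in $E_\mathscr C/E_\mathscr K$. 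Given $\sigma_{C_n}$ with $Q\sigma_{C_n}$ Cauchy, I would absorb the compact (ideal) part into the representatives, using the closed-subsemigroup lemma to keep them support functions, so that the adjusted $\sigma_{C_n}$ converge in $\mathfrak S(\Omega)$; completeness of $(\mathscr C,d_H)$ then yields $C_n\to C$ with $Q\sigma_C$ the limit. The technical content here, as in (i), is the bookkeeping that the ideal adjustments can be realised by genuine elements of $\mathscr K$.
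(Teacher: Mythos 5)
First, a point of reference: the paper contains no proof of this statement. It is Theorem 2.6 of the preliminaries, imported verbatim from \cite[Th.\ 3.2]{ccs}, so there is no in-paper argument to compare yours against; your attempt has to stand on its own. The routine parts of it do stand: the identities $\sigma_A\vee\sigma_B=\sigma_{\overline{\mathrm{co}}(A\cup B)}$ and $\sigma_A+\sigma_B=\sigma_{A\oplus B}$ do show that $J\mathscr C-J\mathscr C$ and $J\mathscr K-J\mathscr K$ are vector sublattices of $C_b(\Omega)$ under the pointwise operations; the AM-identity passes to a closed sublattice and to the quotient by a closed ideal, so (ii) is indeed formal once (i) is known, via Kakutani's representation; and subtracting $\sigma_{\{c_0\}}\in J\mathscr K$ from $\sigma_C$ is a clean way to get $TQ\sigma_C\geq 0$ in (iii).

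The difficulty is that at the two places where the theorem has actual content you give a plan rather than a proof, and you say so yourself. For the ideal half of (i) you state the target implication ($z\in E_\mathscr C$, $y\in E_\mathscr K$, $|z|\le|y|\Rightarrow z\in E_\mathscr K$), correctly note that lattice domination alone cannot transmit the needed regularity, and then conclude that ``making this control quantitative is where the real work lies'' --- i.e.\ the step is not carried out. For the closedness of $TQJ\mathscr C$ in (iii), the proposed ``absorption of the ideal part into the representatives'' is not a well-defined operation: $\sigma_{C_n}+\sigma_{K'}-\sigma_K=\sigma_{C_n\oplus K'}-\sigma_K$ is in general not the support function of any member of $\mathscr C$ unless $K$ happens to be a summand of $C_n\oplus K'$, and the underlying assertion that $J\mathscr C+E_\mathscr K$ is closed is exactly the sort of ``closed cone plus closed subspace'' statement that fails generically, so it cannot be waved through as bookkeeping. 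What is missing in both places is the quantitative description of the quotient order and norm --- that $\|Q(\sigma_A-\sigma_B)\|<r$ forces, for every $\eps>0$, inclusions of the form $A\subset B+K_1+(r+\eps)B_X$ and $B\subset A+K_2+(r+\eps)B_X$ with $K_1,K_2\in\mathscr K$ --- which is precisely the content this paper later extracts from the cited theorem in Lemmas 3.1 and 3.3 and which would have to be established from scratch (using Lemma 2.4 and completeness of $(\mathscr C,d_H)$) to close your argument. As it stands the proposal is an accurate map of where the proof is hard, not a proof.
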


Note that for a nonempty bounded subset $A\subset X^*$, $\|\cdot\|_A=\sigma_{A\cup-A}$ defines a continuous semi-norm on $X$.

\begin{theorem}\cite[Th. 5.5]{ccs} Let all the notions be the same as in Theorem 2.6.
 Then for every nonempty bounded set $F\subset C(K)^*$ of positive functionals satisfying that for each $0\neq u\in TQJ\mathscr C$, there exists $\varphi\in F$ so that $\langle\varphi, u\rangle>0$, the following formula defines  a homogeneous MNC $\mu$ on $X$.
 \begin{equation}
 \mu(B)=\|T[QJ\overline{{\rm co}}(B)]\|_F\;\;{\rm for\;all\;}B\in\mathscr B,
 \end{equation}
 where $\|u\|_F=\sup_{\varphi\in F\cup-F}\langle\varphi, u\rangle$\; for all $u\in C(K)$.

 In particular, the Hausdorff MNC $\beta$ on $X$ satisfies
 \begin{equation}
 \beta(B)=\|T[QJ\overline{{\rm co}}(B)]\|\;\;{\rm for\;all\;}B\in\mathscr B.
 \end{equation}
 Therefore,  $TQJ\overline{\rm co}B=0$ if and only if $B$ is relatively compact.
\end{theorem}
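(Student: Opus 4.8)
The plan is to verify directly that $\mu(B)=\|T[QJ\overline{\rm co}(B)]\|_F$ satisfies the five defining properties of a homogeneous MNC, and then to read off the Hausdorff formula as the special case in which $\|\cdot\|_F$ is replaced by the full $C(K)$-norm. Throughout write $C=\overline{\rm co}(B)$ and $u_B=T[QJ\overline{\rm co}(B)]\in TQJ\mathscr C\subset C(K)^+$. Everything rests on three structural facts. First, since the support function cannot distinguish a bounded set from its closed convex hull, $J\overline{\rm co}(B)=\sigma_B$, so $u_B$ depends on $B$ only through $\sigma_B$. Second, because $J$ is affine and fully order-preserving (Theorem 2.3) while $Q$ and $T$ are linear and order-preserving (Theorem 2.6), the assignment $B\mapsto u_B$ is positively homogeneous ($u_{sB}=s\,u_B$ for $s\ge0$), additive on sums ($u_{A+B}=u_A+u_B$, using $\sigma_{A+B}=\sigma_A+\sigma_B$ and $\overline{\rm co}(A+B)=\overline{\rm co}(A)\oplus\overline{\rm co}(B)$), and monotone ($B\subset A\Rightarrow 0\le u_B\le u_A$ in $C(K)^+$). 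Third, $\|\cdot\|_F$ is a seminorm with $\|u\|_F=\sup_{\varphi\in F}|\langle\varphi,u\rangle|<\infty$ that is monotone on the positive cone. Granting these, properties (2), (4), (6) and the $k\ge0$ half of (5) are immediate: convexification invariance holds because $\overline{\rm co}({\rm co}(B))=\overline{\rm co}(B)$; monotonicity and subadditivity follow from monotonicity and the triangle inequality of $\|\cdot\|_F$ applied to $0\le u_B\le u_A$ and to $u_{A+B}=u_A+u_B$; and $\mu(sB)=s\mu(B)$ follows from $u_{sB}=s\,u_B$. For the remaining case $\mu(-B)=\mu(B)$ of absolute homogeneity, one uses the reflection $R\colon C_b(\Omega)\to C_b(\Omega)$, $(Rf)(x^*)=f(-x^*)$, which satisfies $R\sigma_C=\sigma_{-C}$ and preserves $J\mathscr C$ and $J\mathscr K$ (both symmetric families), hence descends to a lattice isometry of $C(K)$ sending $u_B$ to $u_{-B}$, under which one checks $\|\cdot\|_F$ is invariant.

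The core of the theorem, and the step I expect to be the main obstacle, is the noncompactness property (1). Since $F$ separates the nonzero points of the cone $TQJ\mathscr C$, we have $\mu(B)=\|u_B\|_F=0$ if and only if $u_B=0$; and as $T$ is injective with $\ker Q=E_\mathscr K$, this is in turn equivalent to $\sigma_{\overline{\rm co}(B)}=J\overline{\rm co}(B)\in E_\mathscr K$. The heart of the matter is therefore the identification
\begin{equation}
JC\in E_\mathscr K\iff C\in\mathscr K,\qquad C\in\mathscr C(X).
\end{equation}
The forward direction is trivial, as $C\in\mathscr K$ gives $JC\in J\mathscr K\subset E_\mathscr K$. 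For the converse, fix $\eps>0$ and choose $K_1,K_2\in\mathscr K$ with $\|\sigma_C-\sigma_{K_1}+\sigma_{K_2}\|_\infty<\eps$ on $\Omega$; after a harmless translation we may assume $0\in K_2$, so that $\sigma_{C\oplus K_2}\le\sigma_{K_1}+\eps$ on $\Omega$. Extending by positive homogeneity from $\Omega$ to all of $X^*$ and invoking the isometry of Theorem 2.3, we obtain $C\subset C\oplus K_2\subset K_1+\eps B_X$, and Lemma 2.4 applied to the closed subsemigroup $\mathscr K$ then forces $C\in\mathscr K$. Combined with Mazur's theorem — that $\overline{\rm co}(B)$ is compact precisely when $B$ is relatively compact — this yields both the noncompactness property and the concluding assertion that $TQJ\overline{\rm co}(B)=0$ if and only if $B$ is relatively compact.

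Finally, for the Hausdorff MNC I replace $\|\cdot\|_F$ by the $C(K)$-norm, so that $\beta(B)=\|u_B\|_{C(K)}=\|QJ\overline{\rm co}(B)\|_{E_\mathscr C/E_\mathscr K}$ is the quotient norm, namely $\operatorname{dist}(\sigma_{\overline{\rm co}(B)},E_\mathscr K)$. Using that $E_\mathscr K$ is a lattice ideal of the Banach lattice $E_\mathscr C$ (Theorems 2.5, 2.6) and that $\sigma_{\overline{\rm co}(B)}$ is a positive element, a Banach-lattice computation identifies this distance with the one-sided ball distance $\inf\{r>0:\overline{\rm co}(B)\subset K+rB_X,\ K\in\mathscr K\}$, which is exactly $\beta(\overline{\rm co}(B))=\beta(B)$. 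Reconciling the symmetric quotient-by-a-subspace norm with the one-sided ball-covering definition of $\beta$ is the second delicate point, and it is the ideal — rather than merely subspace — structure of $E_\mathscr K$ that makes this identification go through.
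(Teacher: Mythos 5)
The paper itself contains no proof of this statement: Theorem~2.7 is quoted from \cite{ccs}, so there is no in-paper argument to compare yours with, and I can only judge the proposal on its own merits. Most of it is sound. The structural reductions ($u_{sB}=su_B$ for $s\ge0$, $u_{A+B}=u_A+u_B$, monotonicity of $B\mapsto u_B$ and of $\|\cdot\|_F$ on the positive cone) correctly yield properties (2), (4), (6) and the $k\ge 0$ half of (5); and your treatment of the noncompactness property --- reducing it to $JC\in E_{\mathscr K}\Leftrightarrow C\in\mathscr K$ and proving the nontrivial direction by normalizing $0\in K_2$, extending the inequality $\sigma_{C\oplus K_2}\le\sigma_{K_1}+\eps$ homogeneously off $\Omega$ to get $C\subset K_1+\eps B_X$, and invoking Lemma~2.4 --- is correct and is indeed the heart of the matter. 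The Hausdorff identity is also right in outline, although ``a Banach-lattice computation'' hides the two concrete steps: the witness $y=\sigma_C\wedge\sigma_K\in E_{\mathscr K}$ (this is where the ideal property enters, via $|y|\le|\sigma_{\{c_0\}}|+|\sigma_K|$) gives ${\rm dist}(\sigma_C,E_{\mathscr K})\le\beta(C)$, while approximating $v\in E_{\mathscr K}$ by $\sigma_{K_1}-\sigma_{K_2}$ together with $-\sigma_{K_2}\le\sigma_{-K_2}$ gives the reverse inequality.

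The genuine gap is the step $\mu(-B)=\mu(B)$. The reflection $R$ does descend to a lattice isometry $\widetilde R$ of $TQJ(E_{\mathscr C})$ carrying $u_B$ to $u_{-B}$, but your claim that ``one checks $\|\cdot\|_F$ is invariant'' under $\widetilde R$ is not justified and does not follow from the hypotheses: $F$ is an arbitrary bounded family of positive functionals subject only to the separation condition on the cone, and nothing makes $F$ symmetric under $\widetilde R^*$. The issue is not vacuous, because $u_{-B}\ne u_B$ in general: for $B=\{e_n\}\subset c_0$ and $C=\overline{\rm co}(B)$ one has $(\sigma_{-C}-\sigma_C)(-e_n^*)=1$ for every $n$ (here $e_n^*$ are the coordinate functionals in $\ell_1=c_0^*$), whereas $(\sigma_{K_1}-\sigma_{K_2})(-e_n^*)\rightarrow 0$ for compact $K_1,K_2$ since $-e_n^*\rightarrow 0$ weak$^*$ and support functions of compact sets are weak$^*$-continuous on bounded sets; hence $\sigma_{-C}-\sigma_C\notin E_{\mathscr K}$ and $u_{-B}\ne u_B$. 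So the symmetry of $\mu$ cannot come from a cancellation $u_{-B}=u_B$ and must be extracted from some property of $F$ that you have not established. As written, your argument proves that the formula defines a sublinear MNC in the sense of Definition~1.1~iii); to obtain property (5) in full you must either prove $\sup_{\varphi\in F}\langle\varphi,\widetilde Rv\rangle=\sup_{\varphi\in F}\langle\varphi,v\rangle$ for $v$ in the cone from the stated hypotheses (I do not see how), or replace $F$ by a symmetrized family, which changes $\mu$.
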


 \begin{remark}Since the three mappings  $J: \mathscr C(X)\rightarrow C_b(\Omega)$, $Q: E_\mathscr{C}\rightarrow E_\mathscr{C}/E_\mathscr{K}$, and $T: E_\mathscr{C}/E_\mathscr{K}\rightarrow C(K)$ are order-preserving, $\mathbb T\equiv TQJ$ is an order preserving 1-Lipschitzian mapping from $\mathscr C(X)$ to $C(K)^+$, which is called the three-time order preserving affine mapping.
\end{remark}

\section{Representation of convex measures of noncompactness}

In this section, we are devoted to representation of convex MNCs.
We denote again by  $\mathscr B(X)$ the set of all nonempty bounded subsets of a Banach space $X$, and by $\mathscr C(X)$ (resp., $\mathscr K(X)$) the cone of all nonempty closed (resp., compact) convex subsets of  $X$ endowed the  operations addition $\oplus$ and scalar multiplication of sets, and endowed with the Hausdorff metric $d_H$. For a Banach space $X$, the Banach spaces $C_b(\Omega)$, $E_\mathscr C$, $E_\mathscr K$ and $C(K)$,  the mappings $J, Q, T$ and $\mathbb T=TQJ$ are the same as in Section 2. Let $V=\mathbb T\mathscr C(X)$. Then $V$ is a closed subcone of the positive cone $C(K)^+$ of $C(K)$.

\begin{lemma}
Let $X$ be a Banach space, and $f,g\in V\equiv\mathbb T\mathscr C(X)$. Then $f\leq g$ if and only if there exist $A, B\in\mathscr C(X)$ with $f=\mathbb TA$, $g=\mathbb TB$ such that for all $\eps>0$ there is $K_\eps\in\mathscr{K}(X)$ satisfying
\begin{equation}A\subset B+K_\eps+\eps B_X.\end{equation}
\end{lemma}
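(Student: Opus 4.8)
The plan is to transport the pointwise inequality $f\le g$ in $C(K)$ back through the three maps $T,Q,J$ into a single membership statement about the ideal $E_\mathscr{K}$, and then to match that membership statement against the geometric condition in both directions. Write $f=\mathbb TA=TQ\sigma_A$ and $g=\mathbb TB=TQ\sigma_B$. Since $T$ is an order isometry onto a sublattice of $C(K)$ (Theorem 2.6), it both preserves and reflects order, so $f\le g$ holds in $C(K)$ if and only if $Q\sigma_A\le Q\sigma_B$ in $E_\mathscr{C}/E_\mathscr{K}$. Because $E_\mathscr{K}$ is a closed lattice ideal of the Banach lattice $E_\mathscr{C}$ (Theorem 2.6), the quotient map $Q$ is a lattice homomorphism; hence $Q\sigma_A\le Q\sigma_B$ is equivalent to $Q\big((\sigma_A-\sigma_B)^+\big)=\big(Q(\sigma_A-\sigma_B)\big)^+=0$, i.e.
\[
f\le g\quad\Longleftrightarrow\quad h:=(\sigma_A-\sigma_B)^+\in E_\mathscr{K}.
\]
Here $h\in E_\mathscr{C}^+$ (as $\sigma_A,\sigma_B\in J\mathscr C\subset E_\mathscr C$ and $E_\mathscr C$ is a lattice), the lattice operations are the pointwise ones on $\Omega$, and $h$ is positively homogeneous because it is the positive part of a difference of support functions. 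This reduction is the backbone; the two implications of the lemma now amount to passing between ``$h\in E_\mathscr{K}$'' and the geometric inclusion.

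For the implication ``geometric $\Rightarrow$ order'', assume that for every $\eps>0$ there is $K_\eps\in\mathscr K(X)$ with $A\subset B+K_\eps+\eps B_X$. Applying $J$ and using $\sigma_{B_X}(x^*)=\|x^*\|\le1$ on $\Omega$ gives $\sigma_A-\sigma_B\le\sigma_{K_\eps}+\eps$, hence $0\le h\le \sigma_{K_\eps}^+ +\eps$, using $(a+\eps)^+\le a^++\eps$. Now $\sigma_{K_\eps}=J(K_\eps)\in J\mathscr K\subset E_\mathscr{K}$, so $y_\eps:=\sigma_{K_\eps}^+\in E_\mathscr{K}$ (an ideal is a sublattice). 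Writing $h=(h\wedge y_\eps)+(h-y_\eps)^+$, the term $h\wedge y_\eps$ lies in $E_\mathscr{K}$ (since $0\le h\wedge y_\eps\le y_\eps$ and $E_\mathscr{K}$ is an ideal), while $0\le(h-y_\eps)^+\le\eps$ gives $\|h-h\wedge y_\eps\|=\|(h-y_\eps)^+\|\le\eps$. Thus $\mathrm{dist}(h,E_\mathscr{K})\le\eps$ for every $\eps$, and since $E_\mathscr{K}$ is closed we get $h\in E_\mathscr{K}$, i.e. $f\le g$.

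For the converse ``order $\Rightarrow$ geometric'', assume $h\in E_\mathscr{K}$; I will show that the given representatives $A,B$ already work. By definition of $E_\mathscr{K}=\overline{J\mathscr K-J\mathscr K}$, fix $\eps>0$ and choose $K^{(1)},K^{(2)}\in\mathscr K(X)$ with $\|h-(\sigma_{K^{(1)}}-\sigma_{K^{(2)}})\|<\eps/4$. Picking any $k_0\in K^{(2)}$ and using $-\sigma_{K^{(2)}}\le\sigma_{\{-k_0\}}$, the compact convex set $K':=K^{(1)}\oplus\{-k_0\}$ satisfies $h\le\sigma_{K'}+\eps/4$ on $\Omega$. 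On the unit sphere $S_{X^*}$ this reads $h\le\sigma_{K'}+(\eps/2)\|\cdot\|$, and since $h,\sigma_{K'},\|\cdot\|$ are all positively homogeneous this inequality extends to all of $\Omega$. Combined with $\sigma_A-\sigma_B\le h$ and $\sigma_{K'}+(\eps/2)\sigma_{B_X}=\sigma_{K'\oplus(\eps/2)B_X}$, the order correspondence of Theorem 2.3 yields $A\subset\overline{B+K'+(\eps/2)B_X}$; and the standard inclusion $\overline{S}\subset S+(\eps/2)B_X$ removes the closure, giving $A\subset B+K'+\eps B_X$ with $K_\eps:=K'\in\mathscr K(X)$. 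I expect the main obstacle to be exactly this last direction: the passages that convert $-\sigma_{K^{(2)}}$ into the support function $\sigma_{\{-k_0\}}$ and that upgrade an additive constant to the homogeneous majorant $\eps\|\cdot\|$ are where the geometry is genuinely recovered from the purely lattice-theoretic membership $h\in E_\mathscr{K}$, whereas the reduction step and the first implication are essentially formal once the lattice-homomorphism and ideal properties are in hand.
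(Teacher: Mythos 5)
Your proof is correct, and its necessity half is in substance the paper's argument; the sufficiency half and the overall packaging are genuinely different. The paper proves sufficiency in three lines: from $A\subset B+K_\eps+\eps B_X$ it applies $\mathbb T$ directly, uses that $\mathbb T$ is affine and order-preserving with $\mathbb TK_\eps=0$, gets $\mathbb TA\le\mathbb TB+\eps\,\mathbb TB_X$, and lets $\eps\to0^+$. You instead route both directions through the single criterion $f\le g\Longleftrightarrow(\sigma_A-\sigma_B)^+\in E_{\mathscr K}$, proving sufficiency via the ideal decomposition $h=(h\wedge y_\eps)+(h-y_\eps)^+$ and the estimate $\mathrm{dist}(h,E_{\mathscr K})\le\eps$. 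For necessity the two arguments coincide: the paper's step ``by definition of the order-preserving quotient mapping $Q$, for every $\eps>0$ there exist $C,D\in\mathscr K(X)$ with $J(B)-J(A)-(J(C)-J(D))\ge-\eps$'' is exactly your approximation of $h$ by elements of $J\mathscr K-J\mathscr K$; both then use positive homogeneity to upgrade the constant $\eps$ to $\eps\|\cdot\|_{X^*}=\eps\sigma_{B_X}$, and both absorb the compact perturbation into a single compact convex set (the paper via $J(D)-J(C)\le J(D-C)$, you via $-\sigma_{K^{(2)}}\le\sigma_{\{-k_0\}}$, giving $K^{(1)}-k_0$ instead of $D-C$). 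What your version buys is a rigorous justification, via the lattice-homomorphism property of $Q$ and the fact that $E_{\mathscr K}$ is a closed ideal, of the paper's rather terse appeal to ``the definition of $Q$''; what it costs is that the trivial direction becomes noticeably longer than it needs to be. Both treatments share the same mild imprecision at the very end (passing from $A\subset\overline{B+K+\eps B_X}$ to an unclosured inclusion), which you at least acknowledge and repair by enlarging $\eps/2$ to $\eps$.
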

\begin{proof}
Sufficiency.  Suppose that $A, B\in\mathscr C(X)$ such that $f=\mathbb TA$, $g=\mathbb TB$.   Let $B_\eps=\overline{B+K_\eps+\eps B_X}\; (=B+K_\eps+\eps B_X$). Since $\mathbb T$ is affine and order-preserving with $\mathbb TC=0$ for all $C\in\mathscr K(X)$, we obtain that
\[\mathbb TA\leq\mathbb TB_\eps=\mathbb TB+\mathbb TK_\eps+\eps\mathbb TB_X.\]
Since $\eps$ is arbitrary, we obtain $f=\mathbb TA\leq\mathbb TB=g$.

Necessity. Note that $\mathbb T=TQJ$. Suppose $f\leq g\in V$. By the definition of $V$, there exist $A, B\in\mathscr C(X)$ such that \[TQJ(A)=\mathbb TA=f\leq g=\mathbb TB=TQJ(B).\]  Since $T: E_\mathscr C/E_\mathscr K\rightarrow T(E_\mathscr C/E_\mathscr K)\subset C(K)$ is fully order-preserving, we get
\[QJ(A)=(T^{-1}\mathbb T)A\leq(T^{-1}\mathbb T)B=QJ(B).\]
Therefore,
\[QJ(B)-QJ(A)\geq0.\]
By definition of the order-preserving quotient mapping $Q$, for every $\eps>0$, there exist $C, D\in\mathscr K(X)$ such that
\[J(B)-J(A)-(J(C)-J(D))\geq-\eps.\]
Positive homogeneity of $J(A),J(B), J(C)$ and $J(D)$ entails
\[J(B)-J(A)-(J(C)-J(D))\geq-\eps\|\cdot\|_{X^*}.\]
Note \[-(J(C)-J(D))=J(D)-J(C)\leq J(D-C).\] Then
\[J(B)+J(D-C)+\eps\|\cdot\|_{X^*}\geq J(A).\]
Let $K_\eps=D-C$. Then the inequality above is equivalent to
\[A\subset B+K_\eps+\eps B_X.\]
\end{proof}
\begin{corollary}
Let $X$ be a Banach space, and $f,g\in V\equiv\mathbb T\mathscr C(X)$. Then $f=g$ if and only if there exist $A, B\in\mathscr C(X)$ with $f=\mathbb TA$, $g=\mathbb TB$ such that for all $\eps>0$ there is $K_1, K_2\in\mathscr{K}(X)$ satisfying
\begin{equation}B\subset A+K_1+\eps B_X,\;{\rm and}\;\; A\subset B+K_2+\eps B_X.\end{equation}
\end{corollary}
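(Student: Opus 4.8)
The plan is to deduce this directly from Lemma 3.1, exploiting the antisymmetry of the order, namely that $f=g$ is equivalent to the conjunction $f\le g$ and $g\le f$. First I would fix, once and for all, representatives $A,B\in\mathscr C(X)$ with $f=\mathbb TA$ and $g=\mathbb TB$; such representatives exist because $f,g\in V=\mathbb T\mathscr C(X)$. The key observation is that the necessity argument in the proof of Lemma 3.1 never uses any special feature of the chosen representatives — it only uses $\mathbb TA=f$, $\mathbb TB=g$, and the order relation between $f$ and $g$ — so the conclusion of that Lemma holds verbatim for this single fixed pair $A,B$.

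For the forward implication, assume $f=g$. Then in particular $\mathbb TA=f\le g=\mathbb TB$, and Lemma 3.1 applied to the fixed pair yields, for every $\eps>0$, a set $K_2\in\mathscr K(X)$ with $A\subset B+K_2+\eps B_X$. Symmetrically, $\mathbb TB=g\le f=\mathbb TA$, so the Lemma applied in the reverse order yields, for every $\eps>0$, a set $K_1\in\mathscr K(X)$ with $B\subset A+K_1+\eps B_X$. Since both applications are made with the same $A,B$, this produces exactly the two displayed inclusions (3.2).

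For the converse, suppose $A,B$ with $f=\mathbb TA$, $g=\mathbb TB$ satisfy (3.2). I would invoke the sufficiency half of Lemma 3.1 twice: the inclusion $A\subset B+K_2+\eps B_X$, valid for all $\eps>0$, gives $f=\mathbb TA\le\mathbb TB=g$, while $B\subset A+K_1+\eps B_X$ gives $g=\mathbb TB\le\mathbb TA=f$. Antisymmetry of the order on $V\subset C(K)^+$ then forces $f=g$, which completes the argument.

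The only point that needs care — and the step I would flag as the main (though minor) obstacle — is ensuring that one and the same pair of representatives $A,B$ serves both inclusions simultaneously; the bare ``there exist'' quantifier in the statement of Lemma 3.1 could a priori return different pairs for the two order relations $f\le g$ and $g\le f$. This is resolved precisely by the remark above that the necessity proof of Lemma 3.1 is representative-independent, so fixing $A,B$ at the outset and feeding this one pair into both directions is legitimate.
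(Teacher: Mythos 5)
Your proposal is correct and follows the route the paper intends: the corollary is stated without proof precisely because it is the conjunction of the two order relations $f\le g$ and $g\le f$ obtained from Lemma 3.1, combined with antisymmetry of the order on $C(K)$. Your observation that the necessity argument in Lemma 3.1 is representative-independent (so one fixed pair $A,B$ serves both directions) is exactly the right way to close the only subtle gap in quoting that lemma twice.
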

\begin{lemma}
Let $f, g\in V$ with $\|f-g\|=r$. Then $|f-g|\leq r\mathbb T(B_X).$
\end{lemma}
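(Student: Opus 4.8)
The plan is to reduce the lattice inequality $|f-g|\leq r\mathbb T(B_X)$ to the two one-sided inequalities $f-g\leq r\mathbb T(B_X)$ and $g-f\leq r\mathbb T(B_X)$, and to prove each by producing an explicit Minkowski-sum inclusion and then invoking the order characterization on $V$ supplied by Lemma 3.1. Since $\mathbb T$ is affine, for $g=\mathbb TB$ one has $g+r\mathbb T(B_X)=\mathbb T(B\oplus rB_X)$, so the target $f\leq g+r\mathbb T(B_X)$ becomes $\mathbb TA\leq\mathbb T(B\oplus rB_X)$; by the sufficiency direction of Lemma 3.1 this will follow once I exhibit, for each $\eps>0$, a set $K_\eps\in\mathscr K(X)$ with $A\subset(B\oplus rB_X)+K_\eps+\eps B_X$.

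First I would unwind the hypothesis $\|f-g\|=r$. Writing $f=\mathbb TA$, $g=\mathbb TB$ with $A,B\in\mathscr C(X)$, and using that $T$ is an isometry (Theorem 2.6) and $Q$ the quotient map, I get $\|f-g\|=\|Q(JA-JB)\|=\inf\{\|JA-JB-w\|: w\in E_\mathscr K\}$. As $E_\mathscr K=\overline{J\mathscr K-J\mathscr K}$, for every $\eps>0$ I may choose $C,D\in\mathscr K(X)$ with $\|JA-JB-(JC-JD)\|_{C_b(\Omega)}\leq r+\eps$. Recalling $J(\cdot)=\sigma_{(\cdot)}$ and the identity $\sigma_A+\sigma_D=\sigma_{A\oplus D}$, this reads $\sup_{x^*\in\Omega}|\sigma_{A\oplus D}(x^*)-\sigma_{B\oplus C}(x^*)|\leq r+\eps$.

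Next I convert this uniform estimate on $\Omega=B_{X^*}$ into a set inclusion. Both support functions are positively homogeneous, so the bound $\sigma_{A\oplus D}-\sigma_{B\oplus C}\leq r+\eps$ on the unit sphere upgrades to $\sigma_{A\oplus D}(x^*)\leq\sigma_{B\oplus C}(x^*)+(r+\eps)\|x^*\|=\sigma_{B\oplus C}(x^*)+\sigma_{(r+\eps)B_X}(x^*)$ for all $x^*\in X^*$, whence $A+D\subset B+C+(r+\eps)B_X$; the reverse bound symmetrically gives $B+C\subset A+D+(r+\eps)B_X$. Then I cancel the extraneous compact set: fixing any $d_0\in D$ yields $A\subset B+(C-d_0)+(r+\eps)B_X$, and $K_\eps:=C-d_0\in\mathscr K(X)$ since a translate of a compact convex set is compact convex. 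As $(r+\eps)B_X=rB_X+\eps B_X$ and $B+rB_X\subset B\oplus rB_X$, this is precisely the inclusion required by Lemma 3.1, giving $f\leq g+r\mathbb T(B_X)$. Running the same argument with a point $c_0\in C$ on the symmetric inclusion gives $g\leq f+r\mathbb T(B_X)$, and taking the lattice supremum of the two yields $|f-g|\leq r\mathbb T(B_X)$.

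I expect the only genuinely delicate point to be the passage from the sup-norm bound on $\Omega$ to the global support-function inequality and thence to the Minkowski inclusion: one must use positive homogeneity to recognize the constant $r+\eps$, which is a bound valid only on the unit ball, as the support function $\sigma_{(r+\eps)B_X}$, and one must perform the bookkeeping trick of translating by a point of $D$ (resp.\ $C$) so that the correcting set stays compact convex and lands in the exact form demanded by Lemma 3.1. Everything else is routine manipulation of the isometry $J$, the quotient $Q$, and the affineness of $\mathbb T$.
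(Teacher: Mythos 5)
Your proposal is correct and follows essentially the same route as the paper's proof: both unwind the quotient norm to produce compact correctors $C,D\in\mathscr K(X)$ with $\|JA-JB-(JC-JD)\|\leq r+\eps$, use positive homogeneity of support functions to recognize the constant bound on $\Omega$ as $(r+\eps)J(B_X)$, and then discard the compact terms to obtain the two one-sided inequalities whose supremum is $|f-g|\leq r\mathbb T(B_X)$. The only difference is cosmetic: the paper stays at the level of support functions in $C_b(\Omega)$ and applies $\mathbb T$ directly (using $\mathbb TK=0$ for $K\in\mathscr K(X)$), whereas you descend to Minkowski-sum inclusions and route the conclusion through Lemma 3.1; the small closure issue in $A+D\subset B+C+(r+\eps)B_X$ is harmlessly absorbed by the $\eps B_X$ slack that Lemma 3.1 already provides.
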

\begin{proof} Let $f_C=\mathbb T C=TQJ(C)$ for all $C\in\mathscr{C}(X)$.
Since $f, g\in V$, there exist $A,B\in\mathscr C(X)$ such that \[f=\mathbb TA=f_A, \;g=\mathbb TB=f_B.\]
Since $T: Q(J(\mathscr{C}(X))\rightarrow V$ is a surjective order-preserving isometry,
\[T^{-1}(f_A)=QJ(A),\;T^{-1}(f_B)=QJ(B)\]
satisfy \[\|Q(J(A))-Q(J(B))\big\|=\|Q\Big(J(A)-J(B)\Big)\|=r.\]
By definition of the quotient mapping $Q: E_{\mathscr{C}}\rightarrow E_{\mathscr{C}}/E_{\mathscr{K}}$, for all $\eps>0$,
there exist  $K_1, K_2\subset\mathscr{K}X$ such that
\[\|\big(J(A)-J(B)\big)-\big(J({K_1})-J({K_2})\big)\|_{C_b(\Omega)}<r+\eps.\]
Positive homogeneity of the functions $J(A), J(B), J(K_1)$ and $J(K_2)$ entail
\[\big|\big(J(A)-J(B)\big)-\big(J({K_1})-J({K_2})\big)\big|<(r+\eps)\|\cdot\|_{X^*}.\]
Note $J(B_X)=\|\cdot\|_{X^*}.$ Then
\[J(A)+J({K_2})<(r+\eps)J(B_X)+J(B)+J({K_1}),\] and \[J(B)+J({K_1})<(r+\eps)J(B_X)+J(A)+J({K_2}).\]
Consequently,
\[\mathbb T(A)\leq \mathbb T(B)+(r+\eps)\mathbb T(B_X),\;\mathbb T(B)\leq \mathbb T(A)+(r+\eps)\mathbb T(B_X).\]
Therefore,
\[\big|\mathbb T(A)-\mathbb T(B)\big|\leq(r+\eps)\mathbb T(B_X).\]
Since $\eps$ is arbitrary, \[\big|f-g\big|=\big|f_A-f_B\big|\leq r\mathbb T(B_X).\]
\end{proof}

Recall (Definition 1.1 iv)) that for a Banach space $X$, $\mu: \mathscr B(X)\rightarrow \R^+$ is said to be a convex MNC provided it satisfies the following four properties.

(P1) [{\bf Noncompactness}] $B\in\mathscr B(X),\;\mu(B)=0 \Longleftrightarrow B$ is relatively compact;

(P2) [{\bf Monotonicity}] $A, B\in\mathscr B(X)\; {\rm with\;}A\supset B\;\Longrightarrow \mu(A)\geq\mu(B);$

(P3) [{\bf Convexification invariance}] $B\in\mathscr B(X)\Longrightarrow \mu({\rm co}{(B)})= \mu(B)$;

(P4) [{\bf Convexity}] $\mu(\lambda A+(1-\lambda)B)\leq\lambda\mu(A)+(1-\lambda)\mu(B),\;\forall A, B\in \mathscr B(X)\;{\rm and\;}0\leq\lambda\leq1.$ 

Every convex MNC admits the following basic properties.
\begin{theorem}
Let $X$ be a Banach space, and $\mu:\mathscr B(X)\rightarrow\mathbb R^+$ be a convex MNC. Then

i)  {\rm [{\bf Density determination}]} $\mu(\overline{B})=\mu(B),\;\forall B\in \mathscr B(X)$;

ii) {\rm[{\bf Translation invariance}]} $\mu(B+C)=\mu(B),\;\forall  B\in \mathscr B(X), C\in\mathscr K(X)$;

iii) {\rm[{\bf Negiligbility}]} $\mu(B\cup C)=\mu(B),\;\forall \; B\in \mathscr B(X), C\in\mathscr K(X)$;

iv) {\rm[{\bf Continuity}]} $\emptyset\neq B_{n+1}\subset B_n\in\mathscr B(X),\;n\in\N;\; \mu(B_n)\rightarrow0\Longrightarrow$\[\bigcap_n\overline{B_n}\neq\emptyset.\]
\end{theorem}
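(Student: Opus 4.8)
The plan is to deduce all four assertions directly from the defining axioms (P1)--(P4), without invoking the $\mathbb T$-machinery. The main difficulty is that a convex MNC carries \emph{no} homogeneity, so I would first manufacture a substitute for it. Applying (P4) to the identity $tB=tB+(1-t)\{0\}$ and using $\mu(\{0\})=0$ (a singleton is compact, so (P1) forces $\mu=0$) gives the one-sided estimate $\mu(tB)\le t\mu(B)$ for $0\le t\le1$. More importantly, for a \emph{convex} set $B$ one has $\alpha B+\beta B=(\alpha+\beta)B$ for $\alpha,\beta\ge0$, so (P4) shows that $\phi_B(s):=\mu(sB)$ is a convex function of $s$ on $(0,\infty)$; being convex on an open interval it is continuous, and in particular $\lim_{s\to1}\mu(sB)=\mu(B)$. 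This continuity of $\phi_B$ is the device that will replace homogeneity in every limiting argument below.

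For \textbf{translation invariance} (ii) I would first treat a single vector $v$: for convex $B$, writing $B+\{v\}=\lambda(\tfrac1\lambda B)+(1-\lambda)(\tfrac1{1-\lambda}\{v\})$ and applying (P4) gives $\mu(B+v)\le\lambda\,\phi_B(1/\lambda)$, since the translate $\tfrac1{1-\lambda}\{v\}$ is a singleton; letting $\lambda\to1^-$ and using continuity of $\phi_B$ yields $\mu(B+v)\le\mu(B)$, and applying this to $B+v$ with $-v$ gives the reverse inequality, hence equality. The passage to arbitrary $B$ is by (P3), since ${\rm co}(B+v)={\rm co}(B)+v$, and the extension from a vector to a compact convex $C\in\mathscr K(X)$ runs identically: now $\tfrac1{1-\lambda}C$ is compact so $\mu(\tfrac1{1-\lambda}C)=0$, giving $\mu(B+C)\le\lambda\,\phi_B(1/\lambda)\to\mu(B)$ for convex $B$, while $B+C\supset B+\{c_0\}$ and the vector case supply the lower bound; (P3) and ${\rm co}(B+C)={\rm co}(B)+C$ then remove convexity of $B$. \textbf{Density} (i) follows by the same scaling idea: reducing to a convex $D$ via (P3) and monotonicity, from $\overline D\subset D+\eps B_X$ and the splitting $D+\eps B_X=\lambda(\tfrac1\lambda D)+(1-\lambda)(\tfrac{\eps}{1-\lambda}B_X)$ with the choice $\lambda=1-\eps$, the ball term contributes exactly $\eps\,\mu(B_X)$ and the other tends to $\mu(D)$ by continuity of $\phi_D$, so $\mu(\overline D)\le(1-\eps)\phi_D(\tfrac1{1-\eps})+\eps\,\mu(B_X)\to\mu(D)$ as $\eps\to0$.

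For \textbf{negligibility} (iii) I would reduce $\mu(B\cup C)$ to $\mu({\rm co}(B\cup C))$ by (P3) and then use the inclusion ${\rm co}(B\cup C)\subset {\rm co}(B)+C'$, where $C'=\overline{{\rm co}}\big(\{0\}\cup(C-u_0)\big)$ for a fixed $u_0\in{\rm co}(B)$: every element $tu+(1-t)c$ of ${\rm co}(B\cup C)$ (the usual form for the convex hull of the union of the two convex sets ${\rm co}(B)$ and $C$) splits as $[tu+(1-t)u_0]+(1-t)(c-u_0)$, with the first summand in ${\rm co}(B)$ and the second in $C'$. By Mazur's theorem $C'$ is compact convex, so translation invariance (ii) gives $\mu({\rm co}(B)+C')=\mu(B)$, whence $\mu(B\cup C)\le\mu(B)$; the reverse is monotonicity. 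Finally \textbf{continuity} (iv) is a Cantor-type argument resting on (iii): choosing $x_n\in B_n$ and setting $S_m=\{x_n:n\ge m\}\subset B_m$, the set $S_1$ differs from $S_m$ only by the finite set $\{x_1,\dots,x_{m-1}\}$, so (iii) applied with $C={\rm co}\{x_1,\dots,x_{m-1}\}$ gives $\mu(S_1)\le\mu(S_m)\le\mu(B_m)\to0$; hence $\mu(S_1)=0$, (P1) makes $\{x_n\}$ relatively compact, and the limit of any convergent subsequence lies in every $\overline{B_N}$, so $\bigcap_n\overline{B_n}\neq\emptyset$.

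The step I expect to be the real obstacle is the one that unlocks everything else: producing a workable continuity surrogate in the absence of absolute and positive homogeneity. The observation that $s\mapsto\mu(sB)$ is convex, hence continuous, for convex $B$ is exactly what lets the scaling splittings converge to the desired equalities; without it the naive convex-combination estimates only return $\mu(B)$ up to an uncontrolled scaling factor, and the reverse inequalities cannot be closed. Everything afterward---the inclusion ${\rm co}(B\cup C)\subset{\rm co}(B)+C'$, the appeal to Mazur's theorem, and the finite-set reduction in (iv)---is routine, and the only point requiring care is to keep every auxiliary set genuinely compact and convex, so that (ii) and (P1) actually apply.
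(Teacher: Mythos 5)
Your proof is correct. For parts i), ii) and iv) it is essentially the argument the paper gives: the paper likewise isolates the convexity (hence continuity) of $t\mapsto\mu(tB)$ as the surrogate for homogeneity, proves ii) via the splitting $\mu(B+C)\le\lambda\mu(\tfrac1\lambda C)+(1-\lambda)\mu(\tfrac1{1-\lambda}B)\rightarrow\mu(B)$ together with the substitution $B\mapsto B+C$ for the reverse inequality, proves i) from $\overline A\subset(1-\tfrac{\varepsilon}{M})A+\tfrac{\varepsilon}{M}(M+1)B_X$, and proves iv) by exactly your tail-sequence reduction to (P1). (A small simplification you could adopt from the paper: $t\mapsto\mu(tB)$ is convex for \emph{every} bounded $B$, not only convex $B$, because $(\lambda s+(1-\lambda)t)B\subset\lambda sB+(1-\lambda)tB$ always holds and monotonicity then applies; this lets you drop the detour through ${\rm co}(B)$ in i) and ii).) Where you genuinely depart from the paper is iii). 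The paper proceeds incrementally: first a single point, via the segment $[0,x_0-b_0]$ and translation invariance; then finite sets by induction; then a general compact $C$ by covering it with a finite $\varepsilon$-net and passing to the limit through one more convex-combination estimate. Your route replaces all of this with the single inclusion ${\rm co}(B\cup C)\subset{\rm co}(B)+\overline{{\rm co}}\big(\{0\}\cup(C-u_0)\big)$, uses Mazur's theorem to certify that the perturbing set is compact convex, and concludes in one line from (P3), monotonicity and ii). This is more direct and entirely avoids the approximation-by-finite-sets step, whose limiting inequalities in the paper require some care to justify; the only price is the (standard) appeal to Mazur's theorem, which the paper's elementary induction does not need.
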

\begin{proof}
i) Monotonicity of $\mu$ entails
\[\mu(A)\leq\mu(\overline{A}),\;\;\forall A\in\mathscr{B}(X).\]
On the other hand, let $M=\||A\||\equiv\sup_{a\in A}\|a\|>0.$ Then
$\forall M\geq\eps>0$,
\[\overline{A}\subset A+\frac{\eps}{M} B_X\subset(1-\frac{\eps}M)A+\frac{\eps}M(M+1) B_X.\]
Convexity and monotonicity of $\mu$ imply
\[\mu(\overline{A})\leq (1-\frac{\eps}M)\mu(A)+\frac{\eps}M\mu((M+1) B_X).\]
Since $\eps$ is arbitrary, we obtain
\[\mu(\overline{A})\leq \mu(A)\leq\mu(\overline{A}).\]

ii) It is easy to check that for each $B\in\mathscr{B}(X)$, $f(t)\equiv\mu(tB), \;t\in\R$ defines a continuous convex function on $\R$. Indeed, since $\mu$ is a convex MNC,  $f$ is convex on $\R$. Consequently, $f$ is continous  on $\R$. $\forall\; B\in \mathscr B(X), C\in \mathscr K(X), 1\geq\lambda>0$,
\[\mu(B+C)=\mu[\lambda(\frac{1}\lambda C)+(1-\lambda)(\frac{1}{1-\lambda}B)]\]
\[\leq\lambda\mu(\frac{1}\lambda C)+(1-\lambda)\mu[(\frac{1}{1-\lambda}B)]\]
\[=(1-\lambda)\mu[(\frac{1}{1-\lambda}B)]\rightarrow\mu(B),\;\;\]
as $\lambda\rightarrow0^+$. Thus, \[\mu(B+C)\leq \mu(B),\;\forall x_0\in X,\;B\in\mathscr{B}(X).\]
We substitute $B+C$ for $B$ in the inequality above. Then
\[\mu(B)\leq\mu[-C+(B+C)]\leq\mu(B+C).\]
Therefore,
\[\mu(B+C)=\mu(B),\;\forall\;B\in\mathscr{B}(X),\;C\in\mathscr K(X).\]

iii) Given $B\in \mathscr B(X), x_0\in X$, choose any $b_0\in B$ and let $B_1=B-x_0$, $C=[0,x_0-b_0]\equiv\{\lambda(x_0-b_0): 0\leq\lambda\leq1\}.$ Then
by ii) we have just proven,
\[\mu(\{x_0\}\cup B)=\mu(\{0\}\cup B_1).\]
Note that $\{0\}\cup B_1\subset B_1+C$, and $C\in\mathscr K(X)$. Then again by ii),
\[\mu(\{0\}\cup B_1)\leq\mu(B_1+C)=\mu(B_1)=\mu(B).\]
Thus,
\[\mu(\{x_0\}\cup B)=\mu(B).\]
It follows that $\mu(B\cup F)=\mu(B)$ for any nonempty finite set $F\subset X$.
For any $C\in\mathscr{K}(X)$ and for any $1>\eps>0$, let $F_\eps$ be a finite set such that $C\subset F_\eps+\eps B_X$. Then
 \[\mu(B\cup C)\leq\mu\Big(B\cup(F_\eps+\eps B_X)\Big)=\mu\Big(B\cup\eps B_X\Big)\]
 \[\leq\mu\Big(\frac{1}{1+\eps}B+\frac{\eps}{1+\eps}(B\cup(1+\eps) B_X)\Big)\;\;\;\]
 \[\leq\frac{1}{1+\eps}\mu(B)+\frac{\eps}{1+\eps}\mu\Big(B\cup(1+\eps) B_X)\Big)\]
 \[\longrightarrow\mu(B),\;\;{\rm as}\;\;\eps\rightarrow0^+.\;\;\;\;\;\;\;\;\;\;\;\;\;\;\;\;\;\;\;\;\;\;\;\]
It follows that \[\mu(B\cup C)=\mu(B),\;\forall \; B\in \mathscr B(X), C\in\mathscr K(X).\]

iv) Let $\{B_n\}\subset\mathscr{B}(X)$ be a sequence of  nonempty subsets with $B_{n+1}\subset B_n\in\mathscr B(X),\;n\in\N;\; \mu(B_n)\rightarrow0$.
For each $n\in\N$, choose any $x_n\in B_n$. Then by iii),
\[\mu(\{x_n\})=\mu(\{x_n\}_{n\geq k})\leq\mu(B_k)\rightarrow0,\;{\rm as}\;k\rightarrow\infty.\]
It follows that $\{x_n\}$ is relatively compact. Let $x_0$ be a cluster point of $\{x_n\}$. Then $x_0\in\bigcap_n\overline{B_n}.$
\end{proof}
For $B\in\mathscr B(X)$, we will simply denote $\mathbb T(B)$ by $f_B$ in the sequel.
\begin{lemma}
Let $\mu$ be a convex MNC on a Banach space $X$. Then

i) $\digamma(f_B)=\mu(B),\,B\in\mathscr B(X)$ defines a monotone convex function on $V$;

ii) For each $r>0$, $\digamma$ is bounded by $b_r\equiv\mu(rB_X)$ on  $V\cap(rB_{C(K)})$;

iii) For each $f\in V$,
\begin{equation}p(g)=\lim_{t\rightarrow 0^+}\frac{\digamma(f+tg)-\digamma(f)}t, \;g\in V\end{equation}
defines a non-negative sublinear functional $p$ on $V$ satisfying
\begin{equation}
p(g)\leq\digamma(f+g)-\digamma(f),\;\;\forall g\in V.
\end{equation}

\end{lemma}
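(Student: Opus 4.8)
The plan is to establish the three assertions of Lemma 3.6 in the order i), ii), iii), exploiting the correspondence $f_B=\mathbb T(B)$ together with the affineness and order-preservation of $\mathbb T$ that were set up in Section 2, and the basic properties of the convex MNC $\mu$ proved in Theorem 3.5. The first and most delicate point is that $\digamma$ must be \emph{well-defined} on $V$: distinct bounded sets $B,B'$ may produce the same image $f_B=f_{B'}$, so I must check $\mu(B)=\mu(B')$ whenever $\mathbb T(B)=\mathbb T(B')$. I expect this to be the main obstacle, and the tool for it is Corollary 3.2: $f_B=f_{B'}$ forces, for every $\eps>0$, compact convex sets $K_1,K_2$ with $\overline{\rm co}(B')\subset\overline{\rm co}(B)+K_1+\eps B_X$ and $\overline{\rm co}(B)\subset\overline{\rm co}(B')+K_2+\eps B_X$. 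Applying monotonicity together with the translation invariance and negligibility properties (Theorem 3.5 ii)--iii)), which absorb the compact summands, and the density and convexification invariance that collapse $B$ onto $\overline{\rm co}(B)$, I obtain $\mu(B')\le\mu(B)+o(1)$ and symmetrically, hence equality as $\eps\to0^+$. This shows $\digamma(f_B)\equiv\mu(B)$ is unambiguous.

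For the structural properties in i), monotonicity of $\digamma$ is immediate from Lemma 3.1: if $f_A\le f_B$ in $V$ then $A\subset B+K_\eps+\eps B_X$, and the same absorption argument via Theorem 3.5 gives $\mu(A)\le\mu(B)$, i.e. $\digamma(f_A)\le\digamma(f_B)$. Convexity of $\digamma$ follows because $\mathbb T$ is affine: for $A,B\in\mathscr C(X)$ and $0\le\lambda\le1$ one has $\lambda f_A+(1-\lambda)f_B=\mathbb T(\lambda A+(1-\lambda)B)$, so that
\[
\digamma\bigl(\lambda f_A+(1-\lambda)f_B\bigr)=\mu\bigl(\lambda A+(1-\lambda)B\bigr)\le\lambda\mu(A)+(1-\lambda)\mu(B)=\lambda\digamma(f_A)+(1-\lambda)\digamma(f_B),
\]
which is precisely property (P4) transported through $\mathbb T$. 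The affine identity is the linchpin here, and is exactly what makes the convexity of $\mu$ pass to $\digamma$.

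For ii), any $f=f_B\in V$ with $\|f\|\le r$ satisfies $\|f_B\|=\||\overline{\rm co}(B)\||\le r$ by the isometry of Theorem 2.3 (so $\overline{\rm co}(B)\subset rB_X$), whence monotonicity gives $\digamma(f)=\mu(B)=\mu(\overline{\rm co}(B))\le\mu(rB_X)=b_r$, with nonnegativity of $\digamma$ clear from $\mu\ge0$. Finally, for iii) I fix $f\in V$ and consider the difference quotient $\varphi(t)=\bigl(\digamma(f+tg)-\digamma(f)\bigr)/t$; by the convexity established in i), $\varphi$ is nondecreasing in $t>0$, so the limit $p(g)$ in $(3.3)$ exists (it is bounded below using ii) and monotonicity), and the standard convex-analysis fact that the one-sided directional derivative of a convex function is sublinear and is dominated by the increment gives both the sublinearity of $p$ and the inequality $(3.4)$, $p(g)\le\digamma(f+g)-\digamma(f)$; nonnegativity of $p$ comes from monotonicity of $\digamma$ along the cone direction. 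The routine verifications of positive homogeneity and subadditivity of $p$ I would leave to the standard directional-derivative computation and not grind through here.
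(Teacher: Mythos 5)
Your parts i) and iii) follow essentially the same route as the paper (well-definedness via Corollary 3.2 plus the absorption of compact summands from Theorem 3.4, convexity through the affineness of $\mathbb T$, monotonicity via Lemma 3.1, and the standard monotone difference-quotient argument for the directional derivative), and they are fine. But your proof of ii) contains a step that fails. You claim that $\|f_B\|_{C(K)}=\||\overline{\rm co}(B)\||$ ``by the isometry of Theorem 2.3'' and deduce $\overline{\rm co}(B)\subset rB_X$ from $\|f_B\|\le r$. This is false: only $J$ is an isometry into $C_b(\Omega)$; the map $\mathbb T=TQJ$ also involves the quotient $Q:E_{\mathscr C}\rightarrow E_{\mathscr C}/E_{\mathscr K}$, which is merely norm non-increasing, so one only has $\|f_B\|\le\||\overline{\rm co}(B)\||$, and the inequality is typically strict. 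Indeed, by Theorem 2.7, $\|f_B\|=\|TQJ\overline{\rm co}(B)\|=\beta(B)$, the Hausdorff measure of noncompactness of $B$. A singleton $B=\{x_0\}$ with $\|x_0\|$ large has $f_B=0$, hence $\|f_B\|=0\le r$ for every $r>0$, yet $B\not\subset rB_X$; so the implication $\|f\|\le r\Rightarrow\overline{\rm co}(B)\subset rB_X$ is simply wrong.

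The conclusion of ii) is still true, and the repair is available from tools you already have: Lemma 3.3 applied to $f$ and $g=0=\mathbb T(\{0\})$ gives $|f|\le\|f\|\,\mathbb T(B_X)\le r f_{B_X}=f_{rB_X}$, i.e. $f\le f_{rB_X}$ in the order of $C(K)$, and then the monotonicity of $\digamma$ established in part i) yields $0\le\digamma(f)\le\digamma(f_{rB_X})=\mu(rB_X)=b_r$. This is exactly the paper's argument; note that it is the order comparison coming from Lemma 3.3, not any norm identity for $\mathbb T$, that does the work.
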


\begin{proof}
i)  We first show that $\digamma$ is well-defined on $V$. Indeed, it follows from Corollary 3.2, for all $A, B\in\mathscr B(X)$,
$f_A=f_B$ if and only if $\forall\;\eps>0, \;\exists \;K_1, K_2\in\mathscr{K}(X)$ such that
\[B\subset K_1+A+\eps B_X,\;{\rm and\;}\;A\subset K_2+B+\eps B_X.\]
Thus, \[\digamma(f_A)=\digamma(f_B)\Longleftrightarrow \mu(A)=\mu(B).\]

 Let $f_A, f_B\in V$ and $\lambda\in[0,1]$. Then
\[\digamma[\lambda f_A+(1-\lambda)f_B]=\digamma[\mathbb T(\lambda A+(1-\lambda)B)]\]
\[=\mu(\lambda A+(1-\lambda)B)\leq \lambda\mu(A)+(1-\lambda)\mu(B)\]
\[=\lambda\digamma(f_A)+(1-\lambda)\digamma(f_B).\]
Therefore, the convexity of $\digamma$ is shown.

To show the monotonicity of $\digamma$, let $f_A\geq f_B\in V$. Then it follows from Lemma 3.1 that  for all $\eps>0$ there is $K_\eps\in\mathscr{K}$ satisfying
\[B\subset A+K_\eps+\eps B_X.\]
It follows from Theorem 3.4 ii) that
\[\digamma(f_B)=\mu(B)\leq\mu(A+\eps B_X)\leq(1-\eps)\mu(\frac{1}{1-\eps}A)+\eps\mu(B_X)\]
\[=(1-\eps)\digamma(\frac{1}{1-\eps}f_A)+\eps\digamma(f_{B_X}).\]
Note that $g: \R^+\rightarrow\R^+$ defined for $t\in\R^+$ by $g(t)=\digamma(tf_A)$ is a convex function. It is continuous at $t=1$. Thus, let $\eps\rightarrow0^+$  in the inequalities above. Then
\[\digamma(f_B)\leq\digamma(f_A).\]

ii) Let $r>0$, $f=f_B\in V\cap(rB_{C(K)})$. Then $\|f\|\leq r$. By Lemma 3.3,  $f\leq rf_{B_X}=f_{rB_X}$. Due to the monotonicity of $\digamma$,
\[0\leq\digamma(f_B)\leq \digamma(f_{rB_X})=\mu(rB_X)=b_r.\]

iii) Let $f,g\in V$.  Since $V$ is a cone, $\digamma(f+tg)$ is clearly well-defined for all $t\geq0$. By i) we have just proven, $\digamma(f+tg)$ is convex in $t\in\R^+$. For all  $0<t<s\leq 1$,
\begin{equation}\nonumber
 \begin{aligned}
0\leq G(t)&\equiv\frac{\digamma(f+tg)-\digamma(f)}{t}\\
&=\frac{\digamma[\frac{t}{s}(f+sg)+\frac{s-t}{s}f]-\digamma(f)}{t}\\
&\leq\frac{\frac{t}{s}\digamma(f+sg)+\frac{s-t}{s}\digamma(f)-\digamma(f)}{t}\\
&=\frac{\frac{t}{s}\digamma(f+sg)-\frac{t}{s}\digamma(f)}{t}\\
&=\frac{\digamma(f+sg)-\digamma(f)}{s}\\
&=G(s).
\end{aligned}
\end{equation}
Therefore, $G(t)$ is increasing  $t\in\R^+$. Consequently,
\[0\leq p(g)=\lim\limits_{t\rightarrow 0^+}G(t)\leq G(1)\leq \digamma(f+g)-\digamma(f).\]
Since $\digamma$ is convex on $V$,  $p$ is a non-negative sublinear functional on $V$ and satisfies
\[p(g)\leq \digamma(f+g)-\digamma(f), \;\forall g\in V.\]
\end{proof}

\begin{lemma}
Let $\mu$ be a convex measure of noncompactness on a Banach space $X$, and   $\digamma$ be defined as Lemma 3.5. Then $\digamma$ is continuous on $V$.
\end{lemma}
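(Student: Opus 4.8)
The plan is to prove continuity by establishing a genuine local Lipschitz estimate for $\digamma$ directly, rather than invoking the classical principle that a convex function which is locally bounded above is automatically locally Lipschitz. That principle is unavailable here because the cone $V=\mathbb T\mathscr C(X)$ typically has empty interior in $C(K)$, so there are no interior points at which to run the usual argument. The device that replaces interiority is Lemma 3.3, which converts norm-closeness into order-closeness: if $f,g\in V$ with $\|f-g\|=r$, then $|f-g|\le r\,\mathbb T(B_X)=r f_{B_X}$, and hence
\[
f\le g+r f_{B_X}\qquad\text{and}\qquad g\le f+r f_{B_X}.
\]
A key point is that $g+r f_{B_X}\in V$: writing $g=\mathbb T B$ and using that $\mathbb T$ is affine together with $f_{B_X}=\mathbb T(B_X)$, we get $g+r f_{B_X}=\mathbb T(B+rB_X)\in V$. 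Thus $f_{B_X}$ functions as a universal admissible order-unit direction inside the cone, and Lemma 3.3 lets us sandwich $f$ between $g$ and a controlled perturbation of $g$ lying again in $V$.

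First I would fix $f\in V$, set $M=\|f\|+1$, and take an arbitrary $g\in V$ with $r\equiv\|f-g\|\le 1$, so that $\|g\|\le M$. Monotonicity of $\digamma$ (Lemma 3.5 i)) applied to $f\le g+r f_{B_X}$ gives $\digamma(f)\le\digamma(g+r f_{B_X})$, so it suffices to control $\digamma(g+r f_{B_X})-\digamma(g)$. Here I would use convexity with the specific parameter $\lambda=r$: since $r\le1$ one has the identity $g+r f_{B_X}=(1-r)g+r\,(g+f_{B_X})$, and convexity of $\digamma$ yields
\[
\digamma(g+r f_{B_X})-\digamma(g)\le r\big[\digamma(g+f_{B_X})-\digamma(g)\big]\le r\,\digamma(g+f_{B_X}),
\]
the last inequality using $\digamma\ge0$. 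Because $\|g+f_{B_X}\|\le M+1$, the boundedness statement of Lemma 3.5 ii) gives $\digamma(g+f_{B_X})\le b_{M+1}=\mu\big((M+1)B_X\big)$. Combining these estimates,
\[
\digamma(f)-\digamma(g)\le\digamma(g+r f_{B_X})-\digamma(g)\le b_{M+1}\,r=b_{M+1}\|f-g\|.
\]
Interchanging the roles of $f$ and $g$ (the hypotheses are symmetric) produces the reverse inequality, whence $|\digamma(f)-\digamma(g)|\le b_{M+1}\|f-g\|$ for every $g\in V$ within distance $1$ of $f$. This is local Lipschitz continuity at $f$, and as $f\in V$ was arbitrary, $\digamma$ is continuous on all of $V$.

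The step I expect to be the main obstacle is precisely the passage from the order inequality supplied by Lemma 3.3 to a two-sided numerical bound: this is where the empty-interior difficulty is actually resolved. The mechanism is the interplay of three ingredients that must be lined up correctly — monotonicity (to turn $f\le g+rf_{B_X}$ into $\digamma(f)\le\digamma(g+rf_{B_X})$), the convexity split with the sharp choice $\lambda=r$ (to extract a factor $r$ and thereby a Lipschitz constant independent of $g$), and the uniform bound $b_{M+1}$ from Lemma 3.5 ii) (to make that constant explicit on the ball of radius $M$). Each piece individually is routine; the care lies in checking that the perturbed point $g+r f_{B_X}$ remains in $V$ and in bounded position, so that all three lemmas are legitimately applicable simultaneously. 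Once this is done, the same constant $b_{M+1}$ in fact yields the stronger conclusion that $\digamma$ is Lipschitz on each bounded subset of $V$, by subdividing segments (which stay in $V$ by convexity of $V$ and in the ball by convexity of the norm), anticipating the sharper assertion in Part I.
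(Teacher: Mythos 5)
Your proof is correct, and while it rests on the same two pillars as the paper's argument --- the order estimate $|f-g|\le \|f-g\|\,f_{B_X}$ of Lemma 3.3 (which is indeed the substitute for the missing interior of $V$) combined with the monotonicity, convexity and local boundedness of $\digamma$ from Lemma 3.5 --- the way you exploit convexity is genuinely different and yields more. The paper, after treating continuity at $0$ separately, writes $v_n\le\frac{1}{1+r_n}\big[(1+r_n)v_0\big]+\frac{r_n}{1+r_n}\big[(1+r_n)f_{B_X}\big]$, i.e.\ it rescales the base point, and then must invoke continuity of the one-variable convex function $t\mapsto\digamma\big((1+t)v_0\big)$ to pass to the limit; this produces only sequential continuity via a $\limsup$/$\liminf$ argument and no modulus. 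Your decomposition $g+rf_{B_X}=(1-r)g+r(g+f_{B_X})$ keeps the base point fixed, plays the extracted factor $r$ against the uniform bound $b_{M+1}$ of Lemma 3.5 ii), and delivers an explicit local Lipschitz estimate $|\digamma(f)-\digamma(g)|\le \mu\big((M+1)B_X\big)\|f-g\|$. That is strictly stronger than the lemma being proved: as you note at the end, it already contains (up to the value of the constant) the Lipschitz-on-bounded-sets assertion of Lemma 3.10 and Theorem 3.11 iv), which the paper only reaches through the much heavier route of Lemmas 3.7--3.9 (G\^ateaux differentiability on finite-dimensional subcones, positivity of the derivatives, and a mean value theorem). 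The only points worth tidying are cosmetic: $g+rf_{B_X}=\mathbb T(B\oplus rB_X)$ with the semigroup sum $\oplus$ rather than $B+rB_X$, and in the symmetric direction one bounds $\digamma(f+f_{B_X})$ using $\|f+f_{B_X}\|\le M+1$; both are immediate.
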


\begin{proof}
We first show that $\digamma$ is continuous at $0$. Let $\{u_n\}\subset V$ be a null sequence with  $r_n=\|u_n\|,\;n\in\N$. By Lemma 3.3, for all sufficiently large $n\in\N$, \[0\leq\digamma(u_n)\leq\digamma(\|u_n\|f_{B_X})\leq\|u_n\|\digamma(f_{B_X})\rightarrow0.\]
Therefore, $\digamma$ is continuous at $0$.

To show that $\digamma$ is continuous at each point $v_0\in V$, let $\{v_n\}\subset V$ such that $v_n\rightarrow v_0$, and let $r_n=\|v_n-v_0\|$. Again by Lemma 3.3,
\[|v_n-v_0|\leq r_nf_{B_X},\;\;n=0,1,\cdots.\]
This implies that
\begin{equation}
v_n\leq v_0+r_nf_{B_X}, \;v_0\leq v_n+r_nf_{B_X},\;n=1,2,\cdots.
\end{equation}
It follows from the first inequality of (3.5) that
\[v_n\leq\frac{1}{1+r_n}[(1+r_n)v_0]+\frac{r_n}{1+r_n}[(1+r_n)f_{B_X}].\]
Since $\digamma$ is convex,
\begin{equation}
\digamma(v_n)\leq\frac{1}{1+r_n}\digamma((1+r_n)v_0)+\frac{r_n}{1+r_n}\digamma((1+r_n)f_{B_X}).
\end{equation}
Since $\xi$ (defined for $t\in\R^+$ by $\xi(t)\equiv\digamma((1+t)v_0)$) is a  convex function, it is necessarily continuous in $t\in\R^+$. Therefore,
\begin{equation}
\frac{1}{1+r_n}\digamma((1+r_n)v_0)\rightarrow\digamma(v_0),\;{\rm and\;}\frac{r_n}{1+r_n}\digamma((1+r_n)f_{B_X})\rightarrow0,
\end{equation}
 (3.6) and (3.7) together imply
\begin{equation}\limsup_n\digamma(v_n)\leq\digamma(v_0).
\end{equation}
It follows from the second inequality of (3.5) that
\[\frac{1}{1+r_n}v_0\leq\frac{1}{1+r_n}v_n+\frac{r_n}{1+r_n}f_{B_X}.\]
Convexity of $\digamma$ implies that
\begin{equation}
\digamma(\frac{1}{1+r_n}v_0)\leq\frac{1}{1+r_n}\digamma(v_n)+\frac{r_n}{1+r_n}\digamma(f_{B_X}).
\end{equation}
Therefore,
\begin{equation}\liminf_n\digamma(v_n)\geq\digamma(v_0).
\end{equation}
Continuity of $\digamma$ at $v_0$ follows from (3.8) and (3.10).
\end{proof}
For a subcone $C$ of a Banach space $Z$, we say a linear functional $z^*\in Z^*$ is a positive functional on $C$ if it is non-negative valued on $C$, or, equivalently, $z^*|_{X_C}$ is a positive functional of the subspace $X_C\equiv C-C$ with respect to the ``positive" cone $C$ in the usual sense.
\begin{lemma}
Let $V_0\subset V$ be a subcone of $V$ with $f_{B_X}\in V_0$, and let $E_{V_0}=V_0-V_0$. Then for every functional $x^*\in E^*_{V_0}$ which is positive on $V_0$, we have
\begin{equation}\|x^*\|_{E_{V_0}}=\langle x^*,f_{B_X}\rangle.\end{equation}
\end{lemma}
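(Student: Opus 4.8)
We have a subcone $V_0$ of $V = \mathbb{T}\mathscr{C}(X) \subset C(K)^+$ containing $f_{B_X} = \mathbb{T}(B_X)$, and a functional $x^* \in E_{V_0}^*$ (where $E_{V_0} = V_0 - V_0$) that is positive on $V_0$. We must show its norm equals its value at $f_{B_X}$. Let me think about why $f_{B_X}$ should be an "order unit" for this cone.
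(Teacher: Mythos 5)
Your proposal stops at the setup: you restate the claim and observe that the heart of the matter is that $f_{B_X}$ should act as an order unit for the cone $V_0$, but you never actually establish this or derive the norm identity from it. As written there is no proof, only the (correct) guiding intuition.

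To close the gap you need two things. First, the order-unit property itself, which in this paper is exactly Lemma 3.3: if $u,v\in V\subset C(K)^+$ and $\|u-v\|=r$, then $|u-v|\leq r\,\mathbb T(B_X)=r f_{B_X}$ in the lattice order of $C(K)$. (This is not automatic for an arbitrary cone in $C(K)^+$; it is proved there by lifting back through the isometry $T$ and the quotient $Q$ to $C_b(\Omega)$ and using positive homogeneity of support functions together with $J(B_X)=\|\cdot\|_{X^*}$.) Granting that, take any $h=u-v\in E_{V_0}$ with $\|h\|\leq 1$; then $u\leq v+f_{B_X}$ and $v\leq u+f_{B_X}$, and positivity of $x^*$ on $V_0$ gives $|\langle x^*,h\rangle|\leq\langle x^*,f_{B_X}\rangle$, hence $\|x^*\|_{E_{V_0}}\leq\langle x^*,f_{B_X}\rangle$. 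Second, you need the reverse inequality $\langle x^*,f_{B_X}\rangle\leq\|x^*\|_{E_{V_0}}$, which requires noting $\|f_{B_X}\|\leq 1$ (since $T$ is an isometry, $Q$ is norm-nonincreasing, and $\|J(B_X)\|_{C_b(\Omega)}=\sup_{\omega\in B_{X^*}}\sigma_{B_X}(\omega)=1$). Without both halves the equality in the statement is not established.
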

\begin{proof}
Let $u,v\in V_0$, $h=u-v\in E_{V_0}$ with $\|h\|\leq1$. Then by Lemma 3.3,
\[u\leq v+f_{B_X},\;\;{\rm and\;\;}v\leq u+f_{B_X}.\]
Since $x^*$  is positive on $V_0$, we obtain
\[\langle x^*,u\rangle\leq\langle x^*,v\rangle+\langle x^*,f_{B_X}\rangle\]
and
\[\langle x^*,v\rangle\leq\langle x^*,u\rangle+\langle x^*,f_{B_X}\rangle.\]
Therefore,
\[|\langle x^*,h\rangle|=|\langle x^*,u-v\rangle|\leq\langle x^*,f_{B_X}\rangle.\]
The inequality above is clearly equivalent to (3.11).
\end{proof}
\begin{lemma}
Suppose that  $V_0\subset V$ is a subcone of $V$, and $u\in V_0$ is in the relative interior ${\rm int}(V_0)$ of $V_0$. Let the function $\digamma$ be defined on $V$ as Lemma 3.5.  If $u$ is a G\^{a}teaux differentiability point of $\digamma |_{V_0}$ (the restriction of $\digamma$ to $V_0$). Then its relative G\^{a}teaux derivative $x^*=d_G\digamma |_{V_0}(u)$ is a positive functional on $V_0$.
\end{lemma}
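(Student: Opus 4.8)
The plan is to read off positivity directly from the monotonicity of $\digamma$ established in Lemma 3.5, together with the fact that every element of $V_0$ is, as a direction, nonnegative in the ambient order of $C(K)$. The governing observation is that moving away from $u$ in a direction lying in $V_0$ can only increase $\digamma$, so the corresponding derivative must be nonnegative.

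First I would fix an arbitrary $v\in V_0$ and record that $v\geq0$ in the pointwise order of $C(K)$, since $V_0\subset V\subset C(K)^+$. Because $V_0$ is a convex cone containing $u$, for every $t>0$ we have $tv\in V_0$ and hence $u+tv\in V_0$ (a convex cone is stable under addition); moreover $u+tv\geq u$ in $C(K)$. The monotonicity of $\digamma$ from Lemma 3.5 i) then yields $\digamma(u+tv)\geq\digamma(u)$, so that the difference quotient $\frac{\digamma(u+tv)-\digamma(u)}{t}$ is nonnegative for all $t>0$. This is precisely the assertion that the right directional derivative $p(v)$ of $\digamma$ at $u$ in the direction $v$ is nonnegative, which is already packaged in Lemma 3.5 iii).

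Next I would invoke the two standing hypotheses: that $u\in{\rm int}(V_0)$ and that $u$ is a G\^ateaux differentiability point of $\digamma|_{V_0}$. The relative interior hypothesis guarantees that for each $h\in E_{V_0}=V_0-V_0$ the points $u+th$ lie in $V_0$ for all small $|t|$, so the two-sided limit defining $\langle x^*,h\rangle=d_G\digamma|_{V_0}(u)(h)$ makes sense; differentiability guarantees it exists. In particular, for $v\in V_0\subset E_{V_0}$ the one-sided limit as $t\to0^+$ coincides with the full G\^ateaux derivative, so $\langle x^*,v\rangle=\lim_{t\to0^+}\frac{\digamma(u+tv)-\digamma(u)}{t}=p(v)\geq0$. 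As $v\in V_0$ was arbitrary, $x^*$ is a positive functional on $V_0$, as claimed.

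I do not expect a genuine obstacle here; the two points worth making explicit are (a) that the order under which $\digamma$ is monotone is exactly the pointwise order of $C(K)$ restricted to $V$, so that each $v\in V_0$ really is a nonnegative direction, and (b) that $u+tv$ stays inside $V_0$ for small $t>0$, so that the directional derivative is legitimately computed within the domain of $\digamma|_{V_0}$. The relative interior assumption on $u$ is what upgrades the nonnegative right derivative to the genuine (two-sided) G\^ateaux derivative, and the substance of the lemma is no more than the remark that monotonicity forces the derivative to be nonnegative along all cone directions.
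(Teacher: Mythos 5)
Your proof is correct and follows exactly the route the paper intends: the paper's own proof is the one-line remark that the lemma ``is an immediate consequence of definition of G\^ateaux derivative and the monotonicity of $\digamma$ on $V$,'' and your argument is precisely that observation written out in full (each $v\in V_0$ is a nonnegative direction in $C(K)^+$, so monotonicity makes the difference quotients nonnegative, and differentiability at the relative interior point $u$ turns the nonnegative one-sided limit into $\langle x^*,v\rangle\geq 0$). No gaps; your expanded version is, if anything, more careful than the original.
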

\begin{proof}
This is an immediate consequence of  definition of G\^{a}teaux derivative and the monotonicity of $\digamma$ on $V$.
\end{proof}

\begin{lemma}
Suppose that $g$ is a continuous convex function on a closed convex subset $D$ of a Banach space $X$ with ${\rm int}(D)\neq\emptyset$. Then the following mean-value theorem holds.

i) $\forall x,y\in {\rm int}(D)$, there exist $\xi\in[x,y]\equiv\{\lambda x+(1-\lambda)y:\lambda\in[0,1]\}$ and $x^*_\xi\in\partial g(\xi)$ such that
\begin{equation}g(y)-g(x)=\langle x^*_\xi,y-x\rangle.\end{equation}

ii)  $\forall x^*\in\partial g(x),\;y^*\in\partial g(y)$, we have
\begin{equation}\langle y^*,y-x\rangle\geq g(y)-g(x)\geq\langle x^*,y-x\rangle.\end{equation}
\end{lemma}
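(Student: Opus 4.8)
The plan is to prove ii) directly from the definition of the subdifferential and to obtain i) by restricting $g$ to the segment $[x,y]$, running the one-dimensional mean-value argument there, and then lifting the resulting scalar subgradient back into $X^*$.

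For ii), recall that $x^*\in\partial g(x)$ means $g(z)-g(x)\geq\langle x^*,z-x\rangle$ for all $z\in D$; taking $z=y$ gives $g(y)-g(x)\geq\langle x^*,y-x\rangle$. Symmetrically, $y^*\in\partial g(y)$ with $z=x$ gives $g(x)-g(y)\geq\langle y^*,x-y\rangle$, that is, $\langle y^*,y-x\rangle\geq g(y)-g(x)$. Chaining the two inequalities yields (3.13), so ii) is immediate and requires no interior hypothesis.

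For i), fix $x,y\in\mathrm{int}(D)$. Since $\mathrm{int}(D)$ is convex, the whole segment $[x,y]$ lies in $\mathrm{int}(D)$, and I would set $\varphi(t)=g(x+t(y-x))$ for $t\in[0,1]$. Then $\varphi$ is a finite continuous convex function on $[0,1]$, so its one-sided derivatives $\varphi'_-,\varphi'_+$ exist everywhere and are nondecreasing. Put $m=\varphi(1)-\varphi(0)=g(y)-g(x)$ and consider the tilted function $h(t)=\varphi(t)-mt$, which is convex with $h(0)=h(1)$. By convexity $h(t)\leq h(0)$ for every $t\in[0,1]$, so $h$ attains its minimum at some interior point $t_0\in(0,1)$ (or $h$ is constant, in which case any $t_0$ works); at such a point $0\in\partial h(t_0)$, i.e. $\varphi'_-(t_0)\leq m\leq\varphi'_+(t_0)$.

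It then remains to lift this to $X^*$ at the point $\xi=x+t_0(y-x)\in[x,y]\subset\mathrm{int}(D)$. Because $g$ is continuous convex and $\xi$ is interior, $g$ is locally Lipschitz near $\xi$, whence $\partial g(\xi)$ is nonempty, convex and weak$^*$-compact, and the directional derivatives are recovered as support functions: $g'(\xi;d)=\max_{x^*\in\partial g(\xi)}\langle x^*,d\rangle$ for every direction $d$. Applying this with $d=\pm(y-x)$ and using $g'(\xi;y-x)=\varphi'_+(t_0)$ together with $g'(\xi;-(y-x))=-\varphi'_-(t_0)$, the linear image $\{\langle x^*,y-x\rangle:x^*\in\partial g(\xi)\}$ is exactly the interval $[\varphi'_-(t_0),\varphi'_+(t_0)]$, which contains $m$ by the previous step. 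Since $\partial g(\xi)$ is convex, this scalar image is an interval and the value $m$ is attained, so there is $x^*_\xi\in\partial g(\xi)$ with $\langle x^*_\xi,y-x\rangle=m=g(y)-g(x)$, which is (3.12).

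The main obstacle is precisely this lifting step: passing from the scalar subgradient $m\in\partial\varphi(t_0)$ to a genuine element of $\partial g(\xi)\subset X^*$ that realizes the value $m$ along $y-x$. Here the interior hypothesis on $\xi$ is essential, as it is what guarantees $\partial g(\xi)$ is nonempty and weak$^*$-compact and validates the support-function formula for the directional derivative; without it one could only assert the existence of one-sided directional derivatives rather than an honest subgradient. The convexity of $\partial g(\xi)$, which makes its scalar image along $y-x$ an interval, then furnishes the intermediate-value argument that pins down $x^*_\xi$.
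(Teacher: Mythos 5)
Your proof is correct, but it takes a genuinely different route from the paper's. The paper works directly with the set-valued map $z\mapsto\partial g(z)$ along the segment: it uses monotonicity of $\partial g$ to show the scalar images $[a_z,b_z]=\{\langle z^*,y-x\rangle:z^*\in\partial g(z)\}$ are ordered as $z$ moves from $x$ to $y$, and norm-to-$w^*$ upper semicontinuity to show their union has no gaps, hence is an interval; combined with part ii) this interval must contain $g(y)-g(x)$, which produces $\xi$ and $x^*_\xi$. You instead reduce to one dimension, run the classical Rolle-type argument on $\varphi(t)=g(x+t(y-x))$ tilted by $mt$ to locate $t_0$ with $\varphi'_-(t_0)\leq m\leq\varphi'_+(t_0)$, and then lift via the max formula $g'(\xi;d)=\max_{x^*\in\partial g(\xi)}\langle x^*,d\rangle$, which identifies the scalar image of $\partial g(\xi)$ along $y-x$ as exactly $[\varphi'_-(t_0),\varphi'_+(t_0)]$. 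Your version is more self-contained and pins down $\xi$ constructively as a minimizer of the tilted function, at the price of invoking the support-function formula for directional derivatives (standard, e.g.\ in Phelps); the paper's version avoids the one-dimensional detour but leans on the upper semicontinuity of $\partial g$ and leaves the final step (that $g(y)-g(x)$ lies in the union interval, via part ii)) implicit. Part ii) is handled identically in both.
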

\begin{proof} Since  $g$ is a continuous convex function on  ${\rm int}(D)$, $\partial g: {\rm int}(D)\rightarrow 2^{X^*}$  is nonempty $w^*$-compact-valued and norm-to-$w^*$ upper semicontinuous at each point of ${\rm int}(D);$ and $\partial g$ is maximal monotone on ${\rm int}(D)$. (See, for instance, R.R. Phelps \cite{ph}.)

i) For each fixed $z\in[x,y]$, since $\partial g(z)$ is nonempty $w^*$-compact and convex, $\{\langle z^*,y-x\rangle: z^*\in\partial g(z)\}$ is an interval, say, $[a_z,b_z]\subset\R,$
where $a_z=\min\{\langle z^*,y-x\rangle: z^*\in\partial g(z)\}$, and $b_z=\max\{\langle z^*,y-x\rangle: z^*\in\partial g(z)\}.$ Since $\partial g$ is monotone, for $0\leq\alpha<\beta\leq1$, $z_1=\alpha x+(1-\alpha)y$,  $z_2=\beta x+(1-\beta)y$, and for all $z_1^*\in\partial g(z_1)$ and $z_2^*\in\partial g(z_2)$, we have
\[\langle z_1^*,y-x\rangle\geq\langle z_2^*,y-x\rangle.\]
Since  $\partial g: {\rm int}(D)\rightarrow 2^{X^*}$  is norm-to-$w^*$ upper semicontinuous, for every selection $\varphi$ of $\partial g$ on $[x,y]$ and for every $w^*$-cluster point $z^*_0$ of $\{\varphi(\beta x+(1-\beta)y): \beta\rightarrow \alpha^+\}$ we have $z_0^*\in\partial g(z_1)$. Thus, \[\bigcup\{[a_z,b_z]: z\in[x,y]\}\] is again an interval. Consequently, (3.12) holds.

ii) It follows from definition of the subdifferential mapping $\partial g$.

\end{proof}

\begin{lemma} Suppose that $X$ is a Banach space, $\mu$ is a convex measure of noncompactness on $X$, and that $V_0\subset V$ is a closed subcone of $V$ with $f_{B_X}\in V_0$ such that $E_{V_0}=V_0-V_0$ is a finite dimensional subspace. Let the function $\digamma$ on $V$  be defined as Lemma 3.5. Then

 i) for each $r>0$, $\digamma$ is $c_r$-Lipschitzian on $V_{0,r}\equiv V_0\cap rB_{C(K)}$, where $c_r=\digamma((1+r)f_{B_X})=\mu\big((1+r)B_X\big)$;

 ii) if, in addition, $\mu$ is a homogeneous measure of noncompactness, then $c_r=\digamma(f_{B_X})=\mu(B_X).$
\end{lemma}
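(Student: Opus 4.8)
The plan is to prove part (i) directly, using the mean-value machinery of Lemma 3.9 together with the finite-dimensionality hypothesis to control the subdifferential. Since $E_{V_0}=V_0-V_0$ is finite dimensional and $f_{B_X}\in V_0$, the subcone $V_0$ has nonempty relative interior in $E_{V_0}$, so the continuous convex function $\digamma|_{V_0}$ (continuity comes from Lemma 3.6) is subdifferentiable at every relative interior point and is differentiable on a dense $G_\delta$ subset of $\mathrm{int}(V_0)$. First I would fix $r>0$ and take $u,v\in V_{0,r}$ with $u,v\in\mathrm{int}(V_0)$; by Lemma 3.9(i) applied to $g=\digamma|_{V_0}$ on the finite-dimensional ambient space $E_{V_0}$, there exist $\xi\in[u,v]$ and $x^*_\xi\in\partial(\digamma|_{V_0})(\xi)$ with
\[\digamma(v)-\digamma(u)=\langle x^*_\xi,v-u\rangle,\]
whence $|\digamma(v)-\digamma(u)|\le\|x^*_\xi\|_{E_{V_0}}\,\|v-u\|$. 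The whole estimate therefore reduces to bounding $\|x^*_\xi\|_{E_{V_0}}$ uniformly over $\xi\in V_{0,r}$.

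The key observation is that $x^*_\xi$ is a \emph{positive} functional on $V_0$. This follows exactly as in Lemma 3.8 from the monotonicity of $\digamma$: for any subgradient at a relative interior point, monotonicity of $\digamma$ along directions in $V_0$ forces the subgradient to be nonnegative on $V_0$ (indeed $x^*_\xi$ is a $w^*$-cluster point of relative Gâteaux derivatives, each positive by Lemma 3.8, and positivity is preserved under $w^*$-limits). Once positivity is established, Lemma 3.7 applies and gives the norm identity
\[\|x^*_\xi\|_{E_{V_0}}=\langle x^*_\xi,f_{B_X}\rangle.\]
It remains to bound the right-hand side. Using the subgradient inequality $\digamma(\xi+f_{B_X})-\digamma(\xi)\ge\langle x^*_\xi,f_{B_X}\rangle$ from Lemma 3.5(iii) (equivalently Lemma 3.9(ii)), together with $\xi\ge0$ so that $\xi+f_{B_X}\le(1+r)f_{B_X}$ is false in general — so instead I would bound via monotonicity applied at the point $\xi$: since $\|\xi\|\le r$, Lemma 3.3 gives $\xi\le rf_{B_X}$, hence $\xi+f_{B_X}\le(1+r)f_{B_X}=f_{(1+r)B_X}$, and monotonicity of $\digamma$ yields
\[\langle x^*_\xi,f_{B_X}\rangle\le\digamma(\xi+f_{B_X})-\digamma(\xi)\le\digamma\big((1+r)f_{B_X}\big)=\mu\big((1+r)B_X\big)=c_r.\]
This gives $\|x^*_\xi\|_{E_{V_0}}\le c_r$ and therefore the $c_r$-Lipschitz estimate on the relative interior; a density argument (continuity of $\digamma$ from Lemma 3.6) extends it to all of $V_{0,r}$.

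For part (ii), when $\mu$ is homogeneous I would simply recompute $c_r$: positive homogeneity (property (7), which a homogeneous MNC satisfies) gives $\digamma((1+r)f_{B_X})=(1+r)\digamma(f_{B_X})$, but this is not yet $\mu(B_X)$. The sharper route is to observe that for a homogeneous MNC one can bound $\langle x^*_\xi,f_{B_X}\rangle$ by $\digamma(f_{B_X})$ directly: homogeneity makes $t\mapsto\digamma(tf_{B_X})=t\mu(B_X)$ linear, so the slope $p(f_{B_X})$ in the direction $f_{B_X}$ equals $\mu(B_X)$ independently of the base point, and the monotone increasing difference quotients of Lemma 3.5(iii) then pin $\langle x^*_\xi,f_{B_X}\rangle\le\mu(B_X)$. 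Hence $c_r$ may be replaced by $\digamma(f_{B_X})=\mu(B_X)$.

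The main obstacle I anticipate is the positivity-and-norm step: proving rigorously that the mean-value subgradient $x^*_\xi$ is positive on $V_0$ (so that Lemma 3.7 is applicable) and that the bound on $\langle x^*_\xi,f_{B_X}\rangle$ is genuinely uniform over $V_{0,r}$. The finite-dimensionality of $E_{V_0}$ is essential here — it guarantees $\mathrm{int}(V_0)\ne\emptyset$ relative to $E_{V_0}$, which is what makes the subdifferential nonempty and lets Lemma 3.9 apply; without it one would have to contend with empty relative interiors and the full subdifferentiability theory on sets with empty interior alluded to in the introduction.
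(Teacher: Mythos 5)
Your argument is correct and follows essentially the same route as the paper's: both reduce to bounding subgradient norms on $V_{0,r}$ via positivity of subgradients at relative interior points (Lemma 3.8), the norm identity $\|\varphi\|_{E_{V_0}}=\langle\varphi,f_{B_X}\rangle$ (Lemma 3.7), and the estimate $\langle\varphi,f_{B_X}\rangle\le\digamma(u+f_{B_X})\le\digamma\big((1+r)f_{B_X}\big)$ using $u\le rf_{B_X}$; the only cosmetic difference is that the paper invokes the two-sided subgradient inequality of Lemma 3.9 ii) with a selection drawn from the norm closure of the set of G\^{a}teaux derivatives, whereas you use the mean-value equality of Lemma 3.9 i), an equivalent device in the finite-dimensional setting. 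In part ii) your claim that the slope in the direction $f_{B_X}$ is independent of the base point actually rests on subadditivity of $\mu$ (property (6) of a homogeneous MNC), which yields $\digamma(\xi+f_{B_X})-\digamma(\xi)\le\digamma(f_{B_X})=\mu(B_X)$ --- precisely how the paper sharpens the constant.
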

\begin{proof}
i) Since $V_0$ is a finite dimensional subcone of $V$, the relative interior ${\rm int}_{rel}(V_{0,r})$ is not empty. By Lemma 3.6,   $\digamma$ is convex and continuous  on $V$. It follows that the restriction $\digamma|_{V_0}$ is locally Lipschitzian on ${\rm int}_{rel}(V_0)$.  Therefore, $\digamma|_{V_0}$ is G\^ateaux differentiable at each point of a dense $G_\delta$-subset $M$ of  ${\rm int}_{rel}(V_0)$. Due to Lemma 3.8, for each $u\in M$, $\varphi\equiv d_G\digamma|_{V_0}(u)$ is a positive functional on $V_0$ and with $\|\varphi\|_{E_{V_0}}=\langle\varphi, f_{B_{X}}\rangle$. On the other hand, by Lemma 3.5,
\begin{equation}\langle\varphi, f_{B_{X}}\rangle\leq\digamma(u+f_{B_X})-\digamma(u)\leq\digamma(u+f_{B_X}).\end{equation} Since $u\in V_{0,r}$, by Lemma 3.3 we obtain that $u\leq f_{rB_X}=rf_{B_X}$.  Consequently,
\begin{equation}\|\varphi\|_{E_{V_0}}=\langle\varphi, f_{B_{X}}\rangle\leq\digamma((1+r)f_{B_X})).\end{equation}
Let $N=\{d_G\digamma|_{V_0}(u): u\in M\}$. Since the subdifferential mapping $\partial\digamma|_{V_0}: {\rm int}_{rel}(V_{0,r})\rightarrow 2^{E_{V_0}^*}$ is norm-to-norm upper semi-continuous, the norm closure $\overline{N}$ in $E_{V_0}^*$ contains a selection of $\partial\digamma|_{V_0}$ on $V_{0,r}$. Thus, for all $u,v\in V_{0,r}$, there exist $\varphi_1\in\partial\digamma|_{V_0}(u)\cap\overline{N}$ and $\varphi_2\in\partial\digamma|_{V_0}(v)\cap\overline{N}$. It follows from Lemma 3.9,
\begin{equation}
\langle\varphi_1, v-u\rangle\leq\digamma(v)-\digamma(u)\leq\langle\varphi_2, v-u\rangle.
\end{equation}
Note that $\|\varphi\|_{E_{V_0}}\leq\digamma((1+r)f_{B_X}))$ for all $\varphi\in\overline{N}$. Then it follows from (3.16) that
\[|\digamma(v)-\digamma(u)|\leq c_r\|v-u\|,\] this says that $\digamma$ is $c_r$-Lipschitzian on  ${\rm int}_{rel}(V_{0,r})$.

ii) If, in addition, $\mu$ is a homogeneous measure of noncompactness, then $\digamma$ is a non-negative sublinear functional on $V$. It follows from (3.14) that $c_r=\digamma(f_{B_X})$.
\end{proof}

Now,  we are ready to state and prove the main result of this section, namely, the representation theorem of convex MNC as follows.

\begin{theorem} Suppose that $X$ is a Banach space. Then there is a Banach space $C(K)$ for some compact Hausdorff space $K$ such that for every convex MNC $\mu$ on $X$, there is a function $\digamma$ on the cone $V\equiv\mathbb T(\mathscr{B}(X))\subset C(K)^+$ satisfying

i) $\mu(B)=\digamma(\mathbb TB)$, for all $B\in\mathscr{B}(X)$;

ii)  $\digamma$ is nonnegative-valued  convex and monotone on $V$;

iii) $\digamma$ is bounded by $b_r=\digamma(r\mathbb TB_X)$ on $V\bigcap(rB_{C(K)})$, for all $r\geq0$;

iv) $\digamma$ is $c_r$-Lipschitian on $V\bigcap(rB_{C(K)})$, for all $r\geq0$, where $c_r=\digamma\big((1+r)\mathbb TB_X\big)=\mu\big((1+r)B_X\big)$;

v) In particular, if $\mu$ is a sublinear MNC, then we can  take $c_r=\mu(B_X)$ in iv).
\end{theorem}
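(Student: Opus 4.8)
The plan is to assemble the lemmas of this section around the single space $C(K)$ furnished by Theorem 2.6, which depends only on $X$ and not on the particular measure $\mu$. First I would record that $V=\mathbb T(\mathscr B(X))$ coincides with $\mathbb T(\mathscr C(X))$: for any $B\in\mathscr B(X)$ the support function satisfies $\sigma_B=\sigma_{\overline{\mathrm{co}}(B)}$, so $J(B)=J(\overline{\mathrm{co}}(B))$ and hence $\mathbb T B=\mathbb T\overline{\mathrm{co}}(B)$ with $\overline{\mathrm{co}}(B)\in\mathscr C(X)$. Since $\mathbb T=TQJ$ is affine and $\mathscr C(X)$ is closed under $\oplus$ and nonnegative scalar multiplication, $V$ is a closed subcone of $C(K)^+$ containing $f_{B_X}=\mathbb T B_X$.

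Next, I would define $\digamma$ on $V$ by $\digamma(f_B)=\mu(B)$ exactly as in Lemma 3.5, Corollary 3.2 guaranteeing that it is well defined. Parts i) and ii) are then immediate: i) is the defining identity, while Lemma 3.5 i) gives convexity and monotonicity and $\mu\geq0$ gives nonnegativity. Part iii) is precisely Lemma 3.5 ii) with $b_r=\mu(rB_X)=\digamma(rf_{B_X})$. (Lemma 3.6 moreover yields continuity of $\digamma$, which is subsumed by part iv).)

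The crux is part iv), where I must upgrade the Lipschitz estimate of Lemma 3.10 — proved only on finite dimensional subcones — to the whole, possibly infinite dimensional, cone $V$. The device is localization: given $u,v\in V\cap(rB_{C(K)})$, let $V_0$ be the closed subcone of $V$ generated by $u$, $v$ and $f_{B_X}$. Because $E_{V_0}=V_0-V_0$ is spanned by these three vectors it is finite dimensional, so $V_0$ is a finitely generated, hence closed, subcone of $V$ containing $f_{B_X}$, and $u,v\in V_{0,r}=V_0\cap(rB_{C(K)})$. Lemma 3.10 i) then applies and gives $|\digamma(u)-\digamma(v)|\le c_r\|u-v\|$ with $c_r=\digamma((1+r)f_{B_X})=\mu((1+r)B_X)$. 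The essential point, making the localization work, is that this constant depends only on $r$ and not on the auxiliary subcone $V_0$; since $u,v$ were arbitrary, $\digamma$ is $c_r$-Lipschitzian on all of $V\cap(rB_{C(K)})$. I expect this dimension reduction to be the main obstacle.

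Finally, for part v), I would observe that when $\mu$ is a sublinear MNC the functional $\digamma$ is itself sublinear on $V$: affineness of $\mathbb T$ gives $f_{A\oplus B}=f_A+f_B$ and $f_{tB}=tf_B$ for $t\ge0$, whence subadditivity and positive homogeneity of $\mu$ (together with the density determination of Theorem 3.4) transfer to $\digamma$. Then the estimate $\langle\varphi,f_{B_X}\rangle\le\digamma(u+f_{B_X})-\digamma(u)$ of (3.14), combined with subadditivity in the form $\digamma(u+f_{B_X})-\digamma(u)\le\digamma(f_{B_X})$, sharpens the bound on $\|\varphi\|_{E_{V_0}}$ appearing in Lemma 3.10 to $\digamma(f_{B_X})=\mu(B_X)$; hence $c_r$ may be taken equal to $\mu(B_X)$, uniformly in $r$. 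Once the uniformity of the Lipschitz constant across subcones is isolated in iv), all the remaining assertions follow by direct citation of the preceding lemmas.
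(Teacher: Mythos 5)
Your proposal is correct and follows essentially the same route as the paper: define $\digamma(f_B)=\mu(B)$, invoke Lemma 3.5 for i)--iii), and for iv) localize to the finite dimensional subcone generated by the two given points and $f_{B_X}$ so that Lemma 3.10 i) applies with the constant $c_r=\mu((1+r)B_X)$, which is independent of the auxiliary subcone; v) is Lemma 3.10 ii), exactly as in the paper.
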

\begin{proof}
Given a convex measure of noncompactness $\mu$ on $X$, let
\[\digamma(\mathbb TB)=\mu(B),\;\;\forall B\in\mathscr{B}(X).\]
Then by Lemma 3.5, i), ii) and iii) follow. Next, we will show iv).
Given $r\geq0$, $f,g\in V$ with $\|f\|,\;\|g\|\leq r$, let $V_0\subset V$ be the subcone generated by $\{f,g, \mathbb T(B_X)$. By Lemma 3.10 i), $\digamma$ is $c_r$-Lipschitian on $V_0\bigcap{rB_{C(K)}}$. Therefore,
\[\big|\digamma(f)-\digamma(g)\big|\leq c_r\|f-g\|.\]
Consequently, iv) follows.

v) follows from Lemma 3.10 ii).
\end{proof}
The following result  is a converse version of Theorem 3.11 for sublinear MNCs.
\begin{theorem}
Suppose that $\mu$ is a sublinear MNC defined on a Banach space $X$, and that $C(K)$ is the Banach function space with respect to $X$ defined as in Theorem 3.11. Then there is a bounded subset $M\subset {C(K)^*}^+$ of positive functionals such that
\[\mu(B)=\sup_{\varphi\in M}\langle\varphi, \mathbb T(B)\rangle,\;\;\forall B\in\mathscr{B}(X).\]
\end{theorem}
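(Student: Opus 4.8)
The plan is to reduce the statement to a support-function representation of a continuous sublinear functional, and then to produce the required positive functionals by extending that functional monotonically to all of $C(K)$. First I would invoke Theorem 3.11(v): since $\mu$ is a sublinear MNC, the associated function $\digamma$ on $V=\mathbb T(\mathscr B(X))$ is a monotone, nonnegative, $\mu(B_X)$-Lipschitzian \emph{sublinear} functional satisfying $\digamma(f_B)=\mu(B)$ for every $B\in\mathscr B(X)$, where $f_B=\mathbb T(B)$. Thus the whole problem becomes: represent the sublinear functional $\digamma$, defined on the subcone $V\subset C(K)^+$, as the upper envelope of a bounded family of \emph{positive} linear functionals on $C(K)$.

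Next I would extend $\digamma$ from $V$ to the whole space by the monotone recipe
\[P(u)=\inf\{\digamma(f):f\in V,\ u\leq f\},\qquad u\in C(K).\]
Using positive homogeneity of the cone $V$ together with subadditivity and monotonicity of $\digamma$, one checks directly that $P$ is positively homogeneous, subadditive and monotone, and that $P|_V=\digamma$: indeed $P(g)\leq\digamma(g)$ by taking $f=g$, while $g\leq f\in V$ forces $\digamma(f)\geq\digamma(g)$ by monotonicity of $\digamma$, giving $P(g)\geq\digamma(g)$. The two-sided bound $0\leq P(u)\leq\mu(B_X)\|u\|$, and hence the $\mu(B_X)$-Lipschitz continuity of the sublinear $P$, follows once we know that $u\leq\|u\|\,f_{B_X}$ with $\|u\|f_{B_X}\in V$, the majorizing property discussed below.

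With $P$ in hand I would apply the Hahn--Banach theorem in its sublinear form. The set
\[M:=\partial P(0)=\{\varphi\in C(K)^*:\ \langle\varphi,u\rangle\leq P(u)\ \forall u\in C(K)\}\]
is nonempty, convex and $w^*$-compact, is norm-bounded by the Lipschitz constant $\mu(B_X)$, and satisfies $P(u)=\max_{\varphi\in M}\langle\varphi,u\rangle$ for every $u$. Monotonicity of $P$ makes each $\varphi\in M$ positive: for $u\geq0$ one has $\langle\varphi,-u\rangle\leq P(-u)\leq P(0)=0$, so $\langle\varphi,u\rangle\geq0$; hence $M\subset{C(K)^*}^+$. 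Restricting the evaluation to $V$ then yields, for each $B\in\mathscr B(X)$,
\[\mu(B)=\digamma(f_B)=P(f_B)=\max_{\varphi\in M}\langle\varphi,\mathbb T(B)\rangle,\]
which is precisely the desired formula.

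The step I expect to be the genuine obstacle is the finiteness (well-definedness) of $P$ on all of $C(K)$, i.e. the claim that $V$ is majorizing: for every $u\in C(K)$ there is $f\in V$ with $u\leq f$. This is where the internal structure of Section 2 must be used. Since $J(B_X)=\|\cdot\|_{X^*}$ dominates $\sigma_C=J(C)$ for every bounded closed convex $C$, the element $QJ(B_X)$ is an order unit of the abstract $M$-space $E_\mathscr C/E_\mathscr K$, and the order isometry $T$ carries it to the constant function $\mathbf 1$ of $C(K)$ by the Kakutani representation. Consequently $f_{B_X}=TQJ(B_X)$ is an order unit of $C(K)$, so $u\leq\|u\|f_{B_X}\in V$ for every $u$, which simultaneously furnishes the majorizing property, the finiteness of $P$, and the upper bound used above. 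If one prefers to avoid identifying $f_{B_X}$ with $\mathbf 1$, the alternative is to run the argument inside the closed sublattice $\overline{E_V}=T(E_\mathscr C/E_\mathscr K)$, which is itself an $AM$-space, and then extend each resulting positive functional to $C(K)$ by Kantorovich's positive extension theorem; this again reduces to the same majorizing fact.
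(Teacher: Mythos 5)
Your argument is correct in substance but follows a genuinely different route from the paper. The paper extends $\digamma$ off the cone by the Lipschitz (infimal-convolution) formula $\gimel(f)=\inf\{\digamma(g)+c\|f-g\|:g\in V\}$, takes $M'=\partial\gimel(V)$, and then has to work to make these functionals positive: positivity is proved by restricting to finite-dimensional subcones, using dense G\^ateaux differentiability and Lemma 3.8, and finally replacing each $\varphi\in M'$ by its positive part $\varphi^+$. Your monotone upper-envelope extension $P(u)=\inf\{\digamma(f):f\in V,\ u\le f\}$ short-circuits all of that: monotonicity of $P$ makes every element of $\partial P(0)$ positive in one line, and no finite-dimensional reduction or positive-part decomposition is needed. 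What your approach buys is exactly this directness; what it costs is the finiteness of $P$, i.e.\ the claim that $V$ majorizes $C(K)$.

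That majorizing step is the one place you should be more careful, and you correctly identified it. Theorem 2.6 only provides an order isometry $T$ onto a \emph{sublattice} of $C(K)$, so it is not automatic that $f_{B_X}$ is an order unit of $C(K)$ itself; your main route is only literally valid after arguing (via Lemma 3.3, density of $V-V$ in $T(E_{\mathscr C}/E_{\mathscr K})$, and norm-continuity of the lattice operations) that $QJ(B_X)$ is an order unit of the $M$-space $E_{\mathscr C}/E_{\mathscr K}$, so that one may take $C(K)=T(E_{\mathscr C}/E_{\mathscr K})$ with $f_{B_X}=\mathbf 1$. Your fallback --- run the argument inside the closed sublattice $Y=\overline{V-V}$ and then extend the resulting positive functionals to $C(K)$ --- does work, but not for the reason you give: Kantorovich's theorem would again require $Y$ to be majorizing, which is precisely what is in doubt. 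Instead, a bounded positive functional $\psi$ on a closed sublattice $Y$ of $C(K)$ always extends positively with the same norm by Hahn--Banach dominated by the sublinear functional $p(u)=\|\psi\|_Y\,\|u^+\|_\infty$ (using that $y^+\in Y$ for $y\in Y$), no majorizing hypothesis needed. With that substitution your proof is complete.
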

\begin{proof}
Since $\mu$ is a sublinear MNC, by Theorem 3.11,  the corresponding convex function $\digamma$ is $c (=\mu(B_X)$)-Lipschitzian on the positive cone $V=\mathbb T(\mathscr{B}(X))$. Let
\[\gimel(f)=\inf\{\digamma(g)+c\|f-g\|_{C(K)}: g\in V\},\;\;\forall f\in C(K).\]
Then by a standard argument we can see that $\gimel$ is c-Lipschizian on the whole space $C(K)$ and with $\gimel|_V=\digamma$ on $V$. Let $M^\prime=\partial\gimel(V)\equiv\bigcup_{v\in V} \partial\gimel(v)$. Then $\digamma(v)=\sup_{\varphi\in M^\prime}\langle\varphi,v\rangle$. Next, we will show that each $\varphi\in M^\prime$ is a positive functional on $V$.
Let $F\subset C(K)$ be a finite dimensional subspace so that $F=V_F-V_F$, where $V_F\equiv V\bigcap F$. Then
$\partial\gimel(v)|_{F}\subset\partial\gimel|_{F}(v).$
Since $F$ is finite dimensional, $\gimel|_{F}$ is densely (G\^{a}teaux) differentiable in $F$, hence, densely differentiable in $V_F$. Let $D$ be the dense subset of $V_F$ such that at each point of $D$ $\gimel_{F}$ is differentiable. By Lemma 3.8, for each  point $v\in D$ of  $\gimel|_{F}$, the G\^{a}teaux derivative  $d_G\gimel|_{F}(v)$ is a positive functional on the space $F$. Thus, all elements in $\overline{\rm co}(d_G\gimel|_{F}(v): v\in D)$ are positive functionals on the space $F$. Note $\partial\gimel(V)|_{F}\subset\overline{\rm co}(d_G\gimel|_{F}(v): v\in D).$ Then we see that $\partial\gimel(V)|_F$ is consisting of positive functionals on $F$. Since $F$ is arbitrary, each element in $\partial\gimel(V)$ is a positive functional on the subspace $V-V$ of $C(K)$.

For each $\varphi\in M^{\prime}$, it has a unique  decomposition $\varphi=\varphi^+-\varphi^-$ and with $\|\varphi\|=\|\varphi^+\|+\|\varphi^-\|$, where $\varphi^\pm\in {C^*(K)}^+$, the cone of all  positive functionals of $C(K)$. Since $\varphi|_{V-V}$ is a positive functional of $V-V$, $\varphi|_{V-V}=\varphi^+|_{V-V}$. Thus, we finish the proof by letting $M=\{\varphi^+: \varphi\in M^\prime\}$.
\end{proof}

\section{Applications to a class of basic integral inequalities}
In this section, we will use the representation theorem of convex MNC (Theorem 3.11)  to establish the basic integral inequalities (1.8) and (1.10) with respect to the initial problem (1.5) presented in the first section, which can be understood as an extension, improvement and unification of some classical results related to the problem.

The letter $X$ still denotes a real Banach space, and $I$ is the interval $[0,a]\subset\R^+$ with $a>0$. We use $(I,\Sigma,m)$ to denote the Lebesgue measure space. For a set $A$, $\chi_A$ stands for the characteristic function of $A$.  Unless stated otherwise, all notions and symbols will be the same as previously defined. We use $L_p(I, X)$ ($0< p\leq\infty$) to denote the space of all $X$-valued measurable functions defined on $I$ such that $|f|^p$ is Lebesgue-Bochner integrable. $L_0(I, X)$ stands for the space of all $X$-valued strongly measurable functions.

We are going to convert whether the integral equality is true to whether the mapping $\mathbb J_G: I\rightarrow C_b(\Omega)$ defined blow  is (Lebesgue-Bochner) measurable, where $\Omega=B_{X^*}$ and $C_b(\Omega)$ denotes again the Banach space of all continuous bounded functions on $\Omega$ endowed with the sup-norm. For a Banach space $X$, the Banach spaces $E_\mathscr C$, $E_\mathscr K$ and $C(K)$,  the mappings $J, Q, T$ and $\mathbb T=TQJ$, the positive cone $V=\mathbb T\mathscr C$ are the same as in Section 2.

\begin{definition}
Let $G\subset L_0(I,X)$ be a nonempty subset satisfying $\sup\|G(t)\|\equiv\sup_{g\in G}\|g(t)\|<\infty$ a.e. The mapping $\mathbb J_G: I\rightarrow C_b(\Omega)$ is defined for $t\in I$ by
\begin{equation}
\mathbb J_G(t)(x^*)=\sup_{g\in G}\langle x^*,g(t)\rangle,\;x^*\in\Omega.
\end{equation}

Note that $\mathbb J_G(t)=J(G(t))$, where $G(t)=\{g(t): g\in G\}$.
If it arises no confusion, we simply write $\mathbb J$ for $\mathbb J_G.$

\end{definition}


We recall some notions and basic properties related to  $X$-valued functions defined on the interval $I$ (from Definition 4.2 to Lemma 4.6), which can be found in J. Diestel \cite{di}.
\begin{definition}
 Let $f: I\rightarrow X$ be a function.

 i) $f$ is said to be a simple function provided there exist a finite $\Sigma$-partition $\{E_j\}_{j=1}^n$ and $n$ vectors $\{x_j\}_{j=1}^n\subset X$ such that $f=\sum_{j=1}^{n} x_j\chi _{E_j}$. The integral of $f$ is defined by $\int_Ifdm=\sum_jx_jm(E_j).$

 ii) $f$ is called (strongly) measurable if there is a sequence $\{f_n\}$ of simple functions such that $f_n(s)\rightarrow f(s)$ for almost all $s\in I$. If, in addition,
 the limit $\lim_n\int_If_ndm$ exists, then we say that $f$ is (Lebesgue-Bochner) integrable and $\int_Ifdm\equiv\lim_n\int_If_ndm$ is called the integral of $f$ on $I$.

 iii) We say that $f$ is weakly measurable if for each $x^*\in X^*$ the numerical function $\langle x^*,f\rangle$ is ($\Sigma$-) measurable.

\end{definition}



\begin{lemma}
A function $f:I\rightarrow X$ is (Lebesgue-Bochner) integrable if and only if $f$ is strongly measureable and $\int_I \|f\|dm<\infty.$
\end{lemma}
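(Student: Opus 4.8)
The plan is to prove the classical Bochner integrability criterion, treating the two implications separately; the crux is the sufficiency direction, where I must manufacture an $L_1$-mean approximating sequence of simple functions out of a merely pointwise-approximating one, and then exploit completeness of $X$.

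First I would dispatch the necessity direction. Suppose $f$ is Lebesgue--Bochner integrable. By Definition 4.2 ii), $f$ is strongly measurable, so there is a sequence of simple functions $f_n$ with $f_n(s)\to f(s)$ a.e.; since the norm $\|\cdot\|:X\to\R^+$ is continuous, $\|f_n(s)\|\to\|f(s)\|$ a.e. with each $\|f_n\|$ a nonnegative scalar simple function, whence $\|f\|$ is $(\Sigma\text{-})$measurable. To bound $\int_I\|f\|\,dm$ I would invoke the standard fact that the approximating simple functions witnessing integrability may be chosen so that $\int_I\|f_n-f\|\,dm\to 0$; then for $n$ large
\[\int_I\|f\|\,dm\le\int_I\|f-f_n\|\,dm+\int_I\|f_n\|\,dm<\infty,\]
since each $\int_I\|f_n\|\,dm<\infty$.

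Next, the sufficiency direction, which is the heart of the matter. Assume $f$ is strongly measurable with $\int_I\|f\|\,dm<\infty$, and pick simple functions $g_n\to f$ a.e. These need carry no control on their masses, so the key step is a truncation: set
\[f_n(t)=\begin{cases} g_n(t), & \|g_n(t)\|\le 2\|f(t)\|,\\ 0, & \text{otherwise.}\end{cases}\]
Each $f_n$ is again a simple function, $f_n\to f$ a.e., and by construction $\|f_n(t)\|\le 2\|f(t)\|$, so that $\|f_n-f\|\le 3\|f\|$ pointwise with $3\|f\|$ integrable. The scalar dominated convergence theorem then yields $\int_I\|f_n-f\|\,dm\to 0$, and hence $\|\int_I f_n\,dm-\int_I f_m\,dm\|\le\int_I\|f_n-f_m\|\,dm\to 0$; completeness of the Banach space $X$ provides the limit $\lim_n\int_I f_n\,dm$, so $f$ is Lebesgue--Bochner integrable.

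The main obstacle is precisely this truncation/domination step: the raw sequence $g_n$ supplied by strong measurability gives no handle on $\int_I\|g_n\|\,dm$ (the masses may blow up), so it must be replaced by an $f_n$ that is simultaneously still simple and pointwise dominated by the fixed $L_1$ function $3\|f\|$. The technical points to verify carefully before the dominated convergence theorem can be applied are that the truncation set $\{\,\|g_n\|\le 2\|f\|\,\}$ is measurable (which uses measurability of $\|f\|$, established in the first step) and that $f_n$ genuinely takes finitely many values on measurable sets; both are routine once $\|f\|$ is known to be measurable.
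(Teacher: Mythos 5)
Your argument is correct, but there is nothing in the paper to compare it against: Lemma 4.3 is stated without proof as a recalled classical fact (Bochner's integrability criterion), with the reader referred to Diestel--Uhl. What you have written is essentially the standard textbook proof of that criterion, and the key step --- replacing the raw pointwise-approximating simple functions $g_n$ by the truncations $f_n=g_n\chi_{\{\|g_n\|\le 2\|f\|\}}$ so as to obtain domination by $3\|f\|$ and then apply scalar dominated convergence --- is exactly the right move; your checks that the truncation set is measurable and that $f_n$ remains simple and still converges to $f$ a.e.\ are all sound.

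One point deserves a remark. In the necessity direction you ``invoke the standard fact that the approximating simple functions witnessing integrability may be chosen so that $\int_I\|f_n-f\|\,dm\to 0$.'' Under the standard Diestel--Uhl definition of Bochner integrability this is the definition itself, so nothing needs invoking. But under the paper's literal Definition 4.2 ii) --- which only requires that $f_n\to f$ a.e.\ and that $\lim_n\int_I f_n\,dm$ exist --- this is not a standard fact at all, and indeed with that weaker definition the implication ``integrable $\Rightarrow\int_I\|f\|\,dm<\infty$'' would fail (and the integral would not even be well defined independently of the approximating sequence). So you are tacitly, and correctly, working with the standard definition rather than the one the paper transcribes; it would be worth saying so explicitly, but it is the paper's recollection of the definition that is imprecise, not your proof.
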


\begin{lemma}{\rm(Pettis's measurability theorem)}\; A function $f:I\rightarrow X$ is strongly measurable if and only if

(1) $f(I)\subset X$ is essentially separable, i.e., there exists a null set $I_0\subset I$  such that $f(I\backslash I_0)$ is  separable; and

(2) $f$ is weakly measurable.
\end{lemma}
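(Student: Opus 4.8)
The plan is to prove the two implications separately, treating the necessity as routine and concentrating the effort on the sufficiency.

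For the necessity, suppose $f$ is strongly measurable, so by Definition 4.2 ii) there is a sequence $\{f_n\}$ of simple functions with $f_n(s)\to f(s)$ for all $s\in I\setminus I_0$, where $I_0$ is a null set. Each $f_n$ assumes only finitely many values, so the set $D$ of all values taken by all the $f_n$ is countable; since $f(s)=\lim_n f_n(s)\in\overline{D}$ for every $s\notin I_0$, the range $f(I\setminus I_0)$ lies in the separable set $\overline{D}$, which gives (1). For (2), fix $x^*\in X^*$; then $\langle x^*,f_n\rangle$ is a scalar simple, hence measurable, function and $\langle x^*,f_n(s)\rangle\to\langle x^*,f(s)\rangle$ pointwise off $I_0$, so the scalar limit $\langle x^*,f\rangle$ is measurable, which is (2).

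For the sufficiency, I would first modify $f$ on the null set $I_0$ so that, without loss of generality, $Y\equiv\overline{\mathrm{span}}\,f(I)$ is a separable closed subspace of $X$, since altering $f$ on a null set changes neither hypothesis nor conclusion. The decisive step is to manufacture a countable norming family: using separability of $Y$ together with the Hahn--Banach theorem, I would select $\{x_k^*\}\subset B_{X^*}$ such that $\|y\|=\sup_k\langle x_k^*,y\rangle$ for every $y\in Y$. Granting this, for each fixed $x_m$ taken from a countable dense subset $\{x_m\}$ of $Y$, the function $s\mapsto\|f(s)-x_m\|=\sup_k\langle x_k^*,f(s)-x_m\rangle$ is a countable supremum of measurable functions, each $\langle x_k^*,f(\cdot)\rangle$ being measurable by weak measurability (2), and is therefore measurable. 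With these distance functions in hand I would construct the approximating simple functions explicitly: for each $n\in\N$ let $m_n(s)$ be the least index $m\le n$ minimizing $\|f(s)-x_m\|$ over $1\le m\le n$, and set $f_n(s)=x_{m_n(s)}$. Each level set $\{s:f_n(s)=x_m\}$ is cut out by finitely many inequalities among the measurable functions $\|f(\cdot)-x_k\|$, so it is measurable; since $f_n$ takes at most $n$ values and $m(I)<\infty$, $f_n$ is a simple function in the sense of Definition 4.2 i). Finally, fixing $s$ and $\eps>0$, density of $\{x_m\}$ in $Y$ yields some $x_m$ with $\|f(s)-x_m\|<\eps$, whence for every $n\ge m$ we have $\|f(s)-f_n(s)\|=\min_{1\le k\le n}\|f(s)-x_k\|\le\|f(s)-x_m\|<\eps$; thus $f_n(s)\to f(s)$ everywhere, proving strong measurability.

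The main obstacle is the sufficiency direction, and within it the construction of the countable norming set $\{x_k^*\}$: this is precisely where separability (1) is converted into a tool for transferring the scalar measurability furnished by (2) into measurability of the norm-distance functions $s\mapsto\|f(s)-x_m\|$, which is what makes the nearest-point simple functions measurable. Once that bridge is built, the remaining verifications of measurability of the level sets and of pointwise convergence are routine.
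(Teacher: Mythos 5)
Your proof is correct: both directions are the standard argument (necessity via the countable set of values of the approximating simple functions, sufficiency via a countable norming family for the separable subspace generated by the essential range, which makes the distance functions $s\mapsto\|f(s)-x_m\|$ measurable and allows the nearest-point simple approximations). The paper itself gives no proof of this lemma --- it is recalled from Diestel--Uhl \cite{di} --- and your argument is precisely the classical one found there, so there is nothing to fault and no genuinely different route to compare.
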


\begin{lemma} 
Let $F$ be a closed linear operator defined inside $X$ and having values in a Banach space $Y$. If both $f:I\rightarrow X$ and $Ff$ are Bochner integrable, then $$F\Big(\int_I fds\Big)=\int_I Ff ds.$$
\end{lemma}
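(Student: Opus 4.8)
The plan is to use the classical device of passing to the graph of $F$, thereby reducing the statement to the fact that a Bochner integral of a function taking values (a.e.) in a closed subspace again lies in that subspace.

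First I would equip the product $Z=X\times Y$ with the norm $\|(x,y)\|=\|x\|+\|y\|$, so that $Z$ is a Banach space, and consider the graph
\[
G(F)=\{(x,Fx): x\in{\rm dom}(F)\}\subset Z.
\]
Since $F$ is a closed operator, $G(F)$ is a closed linear subspace of $Z$. Define $h:I\rightarrow Z$ by $h(s)=(f(s),(Ff)(s))$. Because $Ff$ is Bochner integrable it is in particular defined almost everywhere, so $f(s)\in{\rm dom}(F)$ and $(Ff)(s)=F(f(s))$ for almost all $s\in I$; hence $h(s)\in G(F)$ for almost all $s\in I$.

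Next I would check that $h$ is Bochner integrable into $Z$ and compute its integral coordinatewise. Strong measurability of $h$ follows from that of $f$ and $Ff$ (approximate each coordinate by simple functions and combine them), while
\[
\int_I\|h\|\,dm=\int_I\big(\|f\|+\|Ff\|\big)\,dm<\infty
\]
by Lemma 4.3 applied to $f$ and to $Ff$; so $h$ is Bochner integrable, again by Lemma 4.3. By linearity of the integral in each coordinate,
\[
\int_I h\,dm=\Big(\int_I f\,dm,\ \int_I Ff\,dm\Big).
\]
The decisive step is then to show $\int_I h\,dm\in G(F)$. For this I would invoke the Hahn--Banach theorem in the form that a closed subspace $M$ of a Banach space equals the intersection of the kernels of all functionals annihilating it. Given $z^*=(x^*,y^*)\in Z^*$ with $z^*|_{G(F)}=0$, the scalar function $\langle z^*,h(\cdot)\rangle$ vanishes almost everywhere, and since bounded linear functionals commute with the Bochner integral (the scalar-valued case, established elementarily in \cite{di}),
\[
\big\langle z^*,\textstyle\int_I h\,dm\big\rangle=\int_I\langle z^*,h\rangle\,dm=0.
\]
Thus $\int_I h\,dm$ lies in every such kernel, hence in $G(F)$. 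Reading off the two coordinates of $\int_I h\,dm\in G(F)$ yields $\int_I f\,dm\in{\rm dom}(F)$ together with $F\big(\int_I f\,dm\big)=\int_I Ff\,dm$, as required.

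The main obstacle is precisely this last step, the passage of the integral into the closed subspace $G(F)$; this is where closedness of $F$ is indispensable, and it is handled through the Hahn--Banach characterization of closed subspaces combined with the elementary fact that scalar functionals pass through the Bochner integral. One must also take care that $h$ genuinely takes values in $G(F)$ almost everywhere, which is exactly the point at which integrability of $Ff$ guarantees $f(s)\in{\rm dom}(F)$ for almost all $s$.
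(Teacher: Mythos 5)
Your argument is correct. The paper itself offers no proof of this lemma: it is recalled verbatim from Diestel--Uhl (it is Hille's theorem, \cite{di}), and the graph argument you give --- pass to $Z=X\times Y$, note that closedness of $F$ means $G(F)$ is a closed subspace, verify that $h=(f,Ff)$ is Bochner integrable with values a.e.\ in $G(F)$, and conclude via the Hahn--Banach description of a closed subspace as the intersection of the kernels of its annihilator that $\bigl(\int_I f\,dm,\int_I Ff\,dm\bigr)\in G(F)$ --- is precisely the classical proof in that reference. The two points you flag as delicate (that integrability of $Ff$ forces $f(s)\in{\rm dom}(F)$ a.e., and that the integral of a function valued in a closed subspace stays in that subspace) are indeed the only places where anything needs to be checked, and you handle both correctly.
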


\begin{lemma}[Jensen's inequality]  Assume $f:[0,1]\rightarrow X$ is integrable, and $p: X\rightarrow\mathbb{R}$ is a continuous convex function. If $p\circ f$ is integrable, then
\[p\;\Big(\int_0^1fds\Big)\leq \int_0^1\Big(p\circ f\Big)ds.\]
\end{lemma}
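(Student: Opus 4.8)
The plan is to reduce the vector-valued inequality to the scalar case by supporting the convex function $p$ with a single affine functional at the barycenter $x_0\equiv\int_0^1 f\,ds$. First I would note that $x_0$ is a well-defined element of $X$ since $f$ is integrable. Because $p$ is a continuous convex function on all of $X$, its subdifferential $\partial p(x_0)$ is nonempty (continuity of $p$ at $x_0$ already guarantees subdifferentiability there; see R.R. Phelps \cite{ph}, the same source invoked for Lemma 3.9). Fixing any $x^*\in\partial p(x_0)\subset X^*$, the defining property of a subgradient furnishes the affine minorant
\[
p(y)\geq p(x_0)+\langle x^*,y-x_0\rangle,\quad\forall\,y\in X.
\]

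Next I would specialize this inequality to $y=f(s)$ for almost every $s\in[0,1]$ and integrate over $[0,1]$. The left-hand side integrates to $\int_0^1(p\circ f)\,ds$, which is finite by hypothesis. On the right-hand side the constant term contributes $p(x_0)$, while the remaining term is $\int_0^1\langle x^*,f(s)-x_0\rangle\,ds$. Here the scalar function $s\mapsto\langle x^*,f(s)\rangle$ is integrable, since $|\langle x^*,f(s)\rangle|\leq\|x^*\|\,\|f(s)\|$ and $f$ is Bochner integrable (Lemma 4.4). Viewing $x^*\in X^*$ as a bounded, hence closed, linear operator from $X$ into $\mathbb R$, I may apply Lemma 4.5 to pull it through the Bochner integral:
\[
\int_0^1\langle x^*,f(s)-x_0\rangle\,ds=\Big\langle x^*,\int_0^1\big(f(s)-x_0\big)\,ds\Big\rangle=\langle x^*,x_0-x_0\rangle=0.
\]

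Combining the two sides then yields $\int_0^1(p\circ f)\,ds\geq p(x_0)=p\big(\int_0^1 f\,ds\big)$, which is precisely the asserted inequality. The only step demanding genuine care is the existence of the supporting functional $x^*\in\partial p(x_0)$; once this is in hand, the argument is the routine interchange of a bounded functional with a Bochner integral together with the positivity-free linearity of the integral. I therefore expect no obstacle beyond citing the standard subdifferentiability theorem for continuous convex functions on a Banach space, exactly as in the proof of Lemma 3.9.
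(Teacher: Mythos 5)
Your proof is correct. Note that the paper itself offers no proof of this lemma: it is listed among the facts ``which can be found in J. Diestel \cite{di}'' and is simply cited. Your argument --- take $x_0=\int_0^1 f\,ds$, pick a subgradient $x^*\in\partial p(x_0)$ (nonempty because a convex function continuous at a point is subdifferentiable there), integrate the affine minorant $p(y)\geq p(x_0)+\langle x^*,y-x_0\rangle$ along $y=f(s)$, and kill the linear term by commuting $x^*$ with the Bochner integral via Lemma 4.5 --- is the standard supporting-functional proof and is complete. You correctly handle the only delicate points: the existence of the supporting functional and the integrability of both sides of the pointwise inequality before integrating (the left side by hypothesis, the right side because $|\langle x^*,f(s)\rangle|\leq\|x^*\|\|f(s)\|$ with $\|f(\cdot)\|$ integrable). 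So where the paper defers to the literature, your argument supplies a self-contained justification using only tools already present in the paper (Lemma 4.5 and the subdifferentiability facts invoked for Lemma 3.9).
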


\begin{lemma}
Let $X$ be a Banach space, $I=[0,a]$, $G\subseteq L_1(I,X)$ be a nonempty set,  $\psi\in L_1(I,\mathbb{R}_+)$ such that $\sup_{d\in G}\|g(t)\|\leq \psi(t)\;\;a.e.\;t\in I$ . Assume that $\mathbb J_G: I\rightarrow C_b(\Omega)$ is strongly measurable. then $TQ\mathbb J_G: I\rightarrow C(K)$ defined for $t\in I$ by $TQ\mathbb J_G(t)=TQJ\Big(G(t)\Big)$ is integrable on $I$ and satisfies
\begin{equation}0\leq TQJ\Big(\int_0^tG(s)ds\Big)\leq \int_0^t TQJ\Big(G(s)\Big)ds.\end{equation}
\end{lemma}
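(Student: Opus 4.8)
The plan is to treat the two assertions separately and to exploit throughout that $TQ\colon E_\mathscr C\to C(K)$ is a positive, norm-nonincreasing linear operator: it is the composition of the quotient map $Q$ (a lattice homomorphism, since $E_\mathscr K$ is a lattice ideal of $E_\mathscr C$ by Theorem 2.6(i)) with the order isometry $T$, so by Remark 2.8 it is order-preserving with $\|TQ\|\le 1$, and by Lemma 4.5 it commutes with the Bochner integral.

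For the integrability claim I would argue as follows. For a.e.\ $t\in I$ the value $\mathbb J_G(t)=J(G(t))=\sigma_{\overline{\rm co}(G(t))}$ belongs to $J\mathscr C\subset E_\mathscr C$, so the hypothesised strong measurability of $\mathbb J_G$ into $C_b(\Omega)$ is in fact strong measurability into the closed subspace $E_\mathscr C$ (by Pettis' theorem, Lemma 4.4: essential separability and weak measurability descend to a closed subspace); composing with the continuous operator $TQ$ keeps $TQ\mathbb J_G$ strongly measurable into $C(K)$. Moreover $\|TQ\mathbb J_G(t)\|_{C(K)}\le\|\mathbb J_G(t)\|_{C_b(\Omega)}=\sup_{g\in G}\|g(t)\|\le\psi(t)$ a.e., where the middle equality is $(2.1)$ together with Theorem 2.3(ii); since $\psi\in L_1(I,\R^+)$, Bochner's criterion (Lemma 4.3) yields integrability of $TQ\mathbb J_G$.

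For the inequality, the left bound is immediate: $\int_0^tG(s)\,ds$ is bounded (each element has norm at most $\int_0^t\psi\,dm$), hence $TQJ(\int_0^tG(s)\,ds)\in TQJ\mathscr C\subset C(K)^+$ by Theorem 2.6(iii), giving $0\le TQJ(\int_0^tG(s)\,ds)$. The core of the argument is the right bound, which I would obtain from a pointwise estimate of support functions on $\Omega$. Fixing $x^*\in\Omega$ and using that $x^*$ commutes with the Bochner integral (Lemma 4.5), I get $J(\int_0^tG(s)\,ds)(x^*)=\sup_{g\in G}\langle x^*,\int_0^tg(s)\,ds\rangle=\sup_{g\in G}\int_0^t\langle x^*,g(s)\rangle\,ds$; bounding each integrand by $\sigma_{G(s)}(x^*)=\mathbb J_G(s)(x^*)$ before passing to the supremum, and then identifying $\int_0^t\mathbb J_G(s)(x^*)\,ds$ with $(\int_0^t\mathbb J_G(s)\,ds)(x^*)$ (Lemma 4.5 applied to the evaluation functional $f\mapsto f(x^*)$), I arrive at $J(\int_0^tG(s)\,ds)(x^*)\le(\int_0^t\mathbb J_G(s)\,ds)(x^*)$. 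As $x^*\in\Omega$ is arbitrary, this is the order inequality $J(\int_0^tG(s)\,ds)\le\int_0^t\mathbb J_G(s)\,ds$ in $E_\mathscr C$.

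To conclude I would apply the positive operator $TQ$ and let it pass through the integral, obtaining $TQJ(\int_0^tG(s)\,ds)\le TQ(\int_0^t\mathbb J_G(s)\,ds)=\int_0^tTQJ(G(s))\,ds$, which is exactly the asserted estimate. I expect the only delicate points to be bookkeeping rather than substance: one must check that $\int_0^t\mathbb J_G(s)\,ds$, computed in $C_b(\Omega)$, actually lands in the closed subspace $E_\mathscr C$, so that $TQ$ may legitimately be applied and so that the displayed inequality is read in the lattice $E_\mathscr C$ (this follows from closedness of $E_\mathscr C$ together with a Hahn--Banach/Lemma 4.5 argument against functionals annihilating $E_\mathscr C$), and that the pointwise order on $C_b(\Omega)$ restricts to the lattice order of $E_\mathscr C$, which is guaranteed by the lattice-ideal structure of Theorem 2.6(i). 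The remaining verifications are routine.
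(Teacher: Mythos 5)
Your proposal is correct and follows essentially the same route as the paper: the norm bound $\|\mathbb J_G(t)\|\le\psi(t)$ plus Bochner's criterion for integrability, the pointwise estimate of $J\bigl(\int_0^tG(s)\,ds\bigr)$ against evaluation functionals $\delta_{x^*}$ via Lemma 4.5, and then pushing the order inequality through the order-preserving maps, commuting them with the integral by Lemma 4.5 again. The only cosmetic difference is that you apply $TQ$ as a single positive operator (and flag the membership of the intermediate integral in $E_{\mathscr C}$) where the paper applies $Q$ and then $T$ separately.
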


\begin{proof} Since $\mathbb J_G(t)=J\Big(G(t)\Big)$ is strongly measurable with respect to $t$, and since  $\psi\in L_1(I,\mathbb{R}_+)$, it follows from
  \begin{equation}\nonumber
 \begin{aligned}
  \|J\Big(G(t)\Big)\|_{C_b(\Omega)}&=\sup_{x^*\in \Omega}\sup_{g\in G}|\langle x^*,g(t)\rangle|\\
  &\leq \sup_{x^*\in \Omega}\sup_{g\in G}\|x^*\|\cdot \|g(t)\|\\
  &\leq\psi(t),
    \end{aligned}
\end{equation}
that $\|J(G(t))\|_{C_b(\Omega)}\in L_1(I,\mathbb{R}^+)$. By Lemma 4.3, $J(G(t))$  is integrable on $I$. Consequently, $TQJ(G(t))$  is integrable.

Note that every $x^*\in \Omega=B_{X^*}$,  $x^*$ can be regarded as the evaluation function $\delta_{x^*}\subset C^*_b(\Omega)$. By Lemma 4.5,
\begin{equation}\nonumber
 \begin{aligned}
J\Big(\int_0^t G(s)ds\Big)(x^*)&=\sup_{g\in G}\langle x^*,\int_0^t g(s)ds\rangle\\
&=\sup_{g\in G}\int_0^t \langle x^*,g(s)\rangle ds\\
&\leq\int_0^t \sup_{g\in G}\langle x^*,g(s)\rangle ds\\
&=\int_0^t J\Big(G(s)\Big)(x^*)ds\\
&=\Big(\int_0^t J(G(s))ds\Big)(x^*).
\end{aligned}
\end{equation}
Thus, in the order induced by the positive cone of $C_b(\Omega)$, we have
\begin{equation}\nonumber
J\Big(\int_0^tG(s)ds\Big)\leq \int_0^t J\Big(G(s)\Big)ds.
\end{equation}
Since the quotient mapping $Q$ is order preserving, it follows from Lemma 4.5
\begin{equation}\nonumber
 \begin{aligned}
QJ\Big(\int_0^tG(s)ds\Big)&\leq Q\int_0^t J\Big(G(s)\Big)ds\\
&=\int_0^t QJ\Big(G(s)\Big)ds.
\end{aligned}
\end{equation}
Since $T$ is an order isometry,
\begin{equation}\nonumber
 \begin{aligned}
TQJ\Big(\int_0^tG(s)ds\Big)&\leq T\int_0^tQJ\Big(G(s)\Big)ds\\
&=\int_0^t TQJ\Big(G(s)\Big)ds.
\end{aligned}
\end{equation}
Since $TQJ\Big(\mathscr C(X)\Big)\subset C(K)^+$ (Theorem 2.6 (iii)), and since $TQJ\Big(\mathscr C(X)\Big)=TQJ\Big(\mathscr B(X)\Big)$,
\begin{equation}\nonumber
0\leq TQJ(\int_0^tG(s)ds\Big)\leq \int_0^t TQJ\Big(G(s)\Big)ds.
\end{equation}
\end{proof}



\begin{theorem}
Let $\mu$ be a convex MNC on a Banach space $X$,  $I=[0,a]$, $G\subseteq L_1(I,X)$ be a nonempty bounded subset, and  $\psi\in L_1(I,\mathbb{R}^+)$ such that $\sup_{g\in G}\|g(t)\|\leq \psi(t)\;\;a.e.\;t\in I$. Assume that $\mathbb J_G: I\rightarrow C_b(\Omega)$ is strongly measurable. Then $\mu\Big(G(t)\Big)$ is measurable and
\begin{equation}
\mu\Big(\int_0^t G(s)ds\Big)\leq \frac{1}t\int_0^t \mu\Big(tG(s)\Big)ds,\;\;\forall 0<t\leq a.
\end{equation}
\end{theorem}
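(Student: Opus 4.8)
I want to reduce the inequality (4.4) to the representation theorem (Theorem 3.11) combined with the integrated estimate already proved in Lemma 4.8. The key object is the continuous convex function $\digamma$ on the cone $V=\mathbb T(\mathscr B(X))\subset C(K)^+$ with $\digamma(\mathbb TB)=\mu(B)$ for all $B\in\mathscr B(X)$. Since $\mathbb T=TQJ$, Lemma 4.8 already gives me, in the order of $C(K)^+$, the pointwise-integrated bound
\[
0\leq \mathbb T\Big(\int_0^t G(s)\,ds\Big)\leq \int_0^t \mathbb T\big(G(s)\big)\,ds,
\]
so the whole difficulty is to push $\digamma$ through this inequality and an integral in the right way. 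The two facts I will lean on are that $\digamma$ is monotone on $V$ (Theorem 3.11 ii)) and that it is convex and continuous, so that a Jensen-type inequality (Lemma 4.7) is available.

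\textbf{First, the measurability claim.} I would observe that $\mu(G(t))=\digamma\big(\mathbb T(G(t))\big)=\digamma\big(TQJ(G(t))\big)$. By Lemma 4.8 the map $t\mapsto TQJ(G(t))=TQ\mathbb J_G(t)$ is strongly measurable (indeed integrable) into $C(K)$, and by Lemma 3.6 $\digamma$ is continuous on $V$; the composition of a continuous function with a strongly measurable $C(K)$-valued map is scalar measurable, so $t\mapsto\mu(G(t))$ is measurable. The integrability on the right-hand side of (4.4) then follows because $\digamma$ is bounded on bounded subsets of $V$ (Theorem 3.11 iii)) together with the integrable envelope $\psi$.

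\textbf{Second, the inequality itself.} Monotonicity of $\digamma$ applied to the Lemma 4.8 bound gives
\[
\mu\Big(\int_0^t G(s)\,ds\Big)=\digamma\Big(\mathbb T\big(\textstyle\int_0^t G(s)\,ds\big)\Big)\leq \digamma\Big(\int_0^t \mathbb T\big(G(s)\big)\,ds\Big).
\]
Now I rescale so that the integral becomes an average over a probability measure: writing $\int_0^t u(s)\,ds=t\cdot\frac1t\int_0^t u(s)\,ds$ with $u(s)=\mathbb T(G(s))$, and using positive homogeneity of the cone together with Lemma 4.7 (Jensen) for the convex continuous $\digamma$, I obtain
\[
\digamma\Big(\int_0^t \mathbb T\big(G(s)\big)\,ds\Big)=\digamma\Big(\frac1t\int_0^t t\,\mathbb T\big(G(s)\big)\,ds\Big)\leq \frac1t\int_0^t \digamma\big(t\,\mathbb T(G(s))\big)\,ds=\frac1t\int_0^t \mu\big(tG(s)\big)\,ds,
\]
where the last equality uses $t\,\mathbb T(G(s))=\mathbb T(tG(s))$ (affineness of $\mathbb T$) and $\digamma\circ\mathbb T=\mu$. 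This chain is exactly (4.4).

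\textbf{The main obstacle.} The delicate point is the Jensen step: Lemma 4.7 is stated for a convex function defined on all of a Banach space, whereas $\digamma$ lives only on the cone $V$, which has empty interior. I must therefore check that $\frac1t\int_0^t t\,\mathbb T(G(s))\,ds$ and the integrand stay inside $V$ (guaranteed because $V=\mathbb T(\mathscr C)$ is a closed convex cone and the Bochner integral of a $V$-valued integrable function lies in $\overline{V}=V$), and that the convexity/continuity hypotheses of Lemma 4.7 hold for $\digamma$ restricted to $V$. The cleanest route is to apply the Lipschitz extension already constructed in the proof of Theorem 3.12 — extend $\digamma$ to a continuous convex function on the whole of $C(K)$ agreeing with $\digamma$ on $V$ — invoke Jensen there, and then restrict back, using that the integral and the integrand all lie in $V$ so the extended and original functions coincide at every point that appears.
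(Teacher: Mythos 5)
Your proposal is correct and follows essentially the same route as the paper: compose the strongly measurable map $t\mapsto TQ\mathbb J_G(t)$ with the continuous $\digamma$ for measurability, apply monotonicity of $\digamma$ to the bound $\mathbb T\big(\int_0^t G\big)\leq\int_0^t\mathbb T(G)$ from Lemma 4.7, and then rescale to a probability measure and invoke Jensen (the paper does this via the substitution $s\mapsto ts$). One small caveat on your fix for the Jensen-on-a-cone issue: the global Lipschitz extension of Theorem 3.12 is only available when $\digamma$ is globally Lipschitz (the sublinear case); for a general convex MNC, $\digamma$ is Lipschitz only on bounded subsets of $V$, so one should instead extend $\digamma$ by $+\infty$ off the closed cone $V$ and use Jensen for proper lower semicontinuous convex functions (supremum of affine minorants) --- a point the paper itself leaves implicit.
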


\begin{proof} By Lemma 4.7, $\mathbb T(G)=TQJ(G): I\rightarrow V\subset C(K)^+$ defined for $t\in I$ by $\mathbb T(G(t))=TQJ(G(t))=TQ\mathbb J_G(t)$ is integrable on $I$. On the other hand, it follows from Theorem 3.11 that there is a nonnegative-valued continuous convex function $\digamma$ on $V\equiv TQJ\Big(\mathscr{C}(X)\Big)=\mathbb T\Big(\mathscr{B}(X)\Big)\subset C(K)^+$, which is Lipschitzian on each bounded subset of $V$ such that
\[\digamma\Big(\mathbb T(B)\Big)=\mu(B),\;\;\forall B\in\mathscr{B}(X).\]
Therefore,
\begin{equation}\digamma\Big(\mathbb T(G)\Big)=\digamma\Big(TQJ(G)\Big)=\mu(G): I\rightarrow\mathbb R^+\end{equation}
is measurable on $I$. Note that $\mathbb J_{\lambda G}=\lambda\mathbb J_G$ for all fixed $\lambda>0$. $\mathbb J_{tG}: I\rightarrow C_b(\Omega)$ is again strongly measurable for all fixed $0<t\leq a$.
Consequently, for all fixed $0<t\leq a$, \[\digamma\Big(\mathbb T(tG)\Big)=\digamma\Big(tTQJ(G)\Big)=\mu(tG): I\rightarrow\mathbb R^+\]
is again measurable on $I$.
Now, given $0<t\leq a$, if $\int_0^t \mu\Big(tG(s)\Big)ds=\infty$, then we finish the proof. If $\int_0^t \mu\Big(tG(s)\Big)ds<\infty$, then $\mu\Big(tG(s)\Big)=\digamma\Big(t\mathbb T(G(s))\Big)$ is integrable with respect to $s\in[0,t]$, and with
\begin{equation}\frac{1}t\int_0^t \mu\Big(tG(s)\Big)ds=\int_0^1 \mu\Big(tG(ts)\Big)ds.\end{equation}

Since $\psi\in L_1(I,\mathbb{R}^+)$ with $\sup_{g\in G}\|g(s)\|\leq \psi(s)$ a.e.,\begin{equation}\int_0^1 tG(ts)ds=\int_0^t G(s)ds=\Big\{\int_0^t g(s)ds: g\in G\Big\}\subset C(I,X)\end{equation} is uniformly bounded by $\int_0^t\psi(s)ds$ for all $t\in I$.

Since $\digamma$ is a monotone continuous convex function on $V$,  it follows from (4.5), (4.6), Lemma 4.7,  Jensen's inequality (Lemma 4.6),
\begin{equation}\nonumber
 \begin{aligned}
\mu\Big(\int_0^t G(s)ds\Big)&=\digamma\Big(TQJ(\int_0^t G(s)ds)\Big)\\
&\leq\digamma\Big(\int_0^t TQJ(G(s))ds\Big)\\
&=\digamma\Big(\int_0^1 TQJ(tG(ts))ds\Big)\\
&\leq \int_0^1\digamma\Big(TQJ(tG(ts)\Big)ds\\
&=\int_0^1\mu\Big(tG(ts)\Big)ds\\
&=\frac{1}t\int_0^t\mu\Big(tG(s)\Big)ds.
\end{aligned}
\end{equation}
Therefore, (4.3) holds.

\end{proof}

\begin{corollary}
Let $X$ be a  Banach space,  $I=[0,a]$, $G\subseteq L_1(I,X)$ be a nonempty bounded subset, and  $\psi\in L_1(I,\mathbb{R}^+)$ such that $\sup_{g\in G}\|g(t)\|\leq \psi(t)\;\;a.e.\;t\in I$. Assume that $\mathbb J_G: I\rightarrow C_b(\Omega)$ is strongly measurable. Then the following inequality holds
\begin{equation}
\mu\Big(\int_0^t G(s)ds\Big)\leq \int_0^t \mu\Big(G(s)\Big)ds,\;\;\forall 0<t\leq a,
\end{equation}
if one of the following conditions is satisfied.

i) $0<t\leq\min\{1, a\};$

ii) $\mu$ is a sublinear measure of noncompactness.
\end{corollary}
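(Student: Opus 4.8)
The plan is to derive both cases directly from the sharp inequality (4.3) of Theorem 4.8, namely $\mu\big(\int_0^t G(s)\,ds\big) \leq \frac{1}{t}\int_0^t \mu(tG(s))\,ds$, by controlling the integrand $\mu(tG(s))$ against $t\,\mu(G(s))$. The measurability and integrability of $s \mapsto \mu(G(s))$ (and of $s \mapsto \mu(tG(s))$) are already furnished by the proof of Theorem 4.8, so no new measurability question arises; the whole matter reduces to a pointwise scaling estimate for $\mu$. If $\int_0^t \mu(G(s))\,ds = \infty$ the assertion is trivial, so I may assume this integral is finite.

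For case (ii), suppose $\mu$ is a sublinear MNC. Then positive homogeneity gives $\mu(tG(s)) = t\,\mu(G(s))$ for every $t \geq 0$ and almost every $s$. Substituting into (4.3), the factors $1/t$ and $t$ cancel, so that $\frac{1}{t}\int_0^t \mu(tG(s))\,ds = \int_0^t \mu(G(s))\,ds$, which yields (4.7) for all $0 < t \leq a$.

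For case (i), assume $0 < t \leq \min\{1,a\}$, so in particular $t \in (0,1]$. Here $\mu$ need not be homogeneous, and I would instead exploit convexity together with the vanishing of $\mu$ on compact sets. Observing that $tG(s) = t\,G(s) + (1-t)\{0\}$ is a convex combination of $G(s)$ with the singleton $\{0\}$, and applying Convexity together with $\mu(\{0\}) = 0$ (which follows from the Noncompactness axiom, since a singleton is compact), one obtains the subhomogeneity estimate
\[\mu(tG(s)) \leq t\,\mu(G(s)) + (1-t)\,\mu(\{0\}) = t\,\mu(G(s)), \quad \text{a.e. } s.\]
Inserting this into (4.3) gives $\frac{1}{t}\int_0^t \mu(tG(s))\,ds \leq \frac{1}{t}\int_0^t t\,\mu(G(s))\,ds = \int_0^t \mu(G(s))\,ds$, hence (4.7).

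No step presents a genuine obstacle; the only point requiring care is the pointwise scaling inequality $\mu(tG(s)) \leq t\,\mu(G(s))$ for $t \in (0,1]$, which is precisely where the combination of Convexity and the Noncompactness axiom (the vanishing of $\mu$ on $\{0\}$) is essential and where mere Monotonicity would not suffice. It is also worth noting that for $t \leq 1$ homogeneity of $\mu$ is not needed, which is exactly what makes (i) a meaningful relaxation of (ii).
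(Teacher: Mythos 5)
Your proof is correct and follows essentially the same route as the paper: both deduce the corollary from inequality (4.3) of Theorem 4.8 via the pointwise estimate $\mu(tG(s))\leq t\,\mu(G(s))$ for $t\in(0,1]$ and the identity $\mu(tG(s))=t\,\mu(G(s))$ in the sublinear case. The only (immaterial) difference is that the paper obtains the subhomogeneity estimate through the representing convex function $\digamma$ (using $\digamma(tv)\leq t\digamma(v)$ on the cone $V$), whereas you derive it directly from the convexity axiom of $\mu$ together with $\mu(\{0\})=0$; these are the same observation in two guises.
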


\begin{proof}
By Theorem 4.8, it suffices to note that \[\mu\Big(tG(s)\Big)=\digamma\Big(\mathbb T(tG(s))\Big)\leq t\digamma\Big(\mathbb T(G(s))\Big),\;{\rm if}\; 0\leq t\leq1;\]  and that
\[\mu\Big(tG(s)\Big)=t\mu\Big(G(s)\Big), \forall\;t\geq0,\]  if $\mu$ is a sublinear MNC.
\end{proof}

\section{Strong measurability and weak measurability of  $\mathbb J_G$}
In this section, we will discuss measurability and weak measurability of the mapping $\mathbb J_G: I\rightarrow C_b(\Omega)$ for a bounded set $G\subset L_1(I, X)$. As a result, we show that if $G$ is separable in $L_1(I, X)$ and there is $u\in L_p(I, X)$ for some $0<p$ such that $\sup_{g\in G}\|g(t)\|<u(t)$ a.e., then $\mathbb J_G$ is weakly measurable. Making use of this result, we further prove that $\mathbb J_G$ is strongly measurable if $G$ is one of the following classical classes:

a) $G\subset C(I,X)$ is an equi-continuous subset;

b) $G$ is a equi-regulated subset of $R(I, X)$ (the Banach space of bounded functions on $I$ satisfying that $\lim_{t\rightarrow t_0^{\pm}}u(t)$ exist for all $u\in R(I,X)\;{\rm and}\;t_0\in I$) endowed with the sup-norm; and

c)  $G\subset L_1(I,X)$ is uniformly measurable.

\begin{lemma}
Let $\mathbf T$ be a Hausdorff topological space, and $C_b(\mathbf T)$ be the Banach space of all bounded continuous  functions on $\mathbf T$ endowed with the sup-norm $\|f\|=\sup_{t\in\mathbf T}|f(t)|$ for $f\in C_b(\mathbf T)$. Then the closed unit ball $B_{C_b(\mathbf T)^*}$ of the dual $C_b(\mathbf T)^*$ satisfies
$$B_{C_b(\mathbf T)^*}=w^*\text{-}\overline{\rm co}\{\pm\delta_t: t\in\mathbf T\},$$
where $\delta_t\in C_b(\mathbf T)^*$ is the evaluation functional defined for $f\in C_b(\mathbf T)$ by $\langle\delta_t,f\rangle=f(t)$, and $w^*\text{-}\overline{\rm co}(A)$ denotes the $w^*$-closed convex hull of $A$ in $C_b(\mathbf T)^*$.
\end{lemma}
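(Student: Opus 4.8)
The plan is to prove the identity
\[
B_{C_b(\mathbf T)^*}=w^*\text{-}\overline{\rm co}\{\pm\delta_t: t\in\mathbf T\}
\]
by a standard separation (bipolar) argument. First I would note the easy inclusion $\supseteq$: each evaluation functional $\delta_t$ has $\|\delta_t\|\le 1$ since $|\langle\delta_t,f\rangle|=|f(t)|\le\|f\|$ for all $f\in C_b(\mathbf T)$, so $\{\pm\delta_t:t\in\mathbf T\}\subset B_{C_b(\mathbf T)^*}$. Because the closed unit ball of a dual space is $w^*$-closed (Banach--Alaoglu) and convex, it must contain the $w^*$-closed convex hull of this set, giving one direction for free.

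The substance is the reverse inclusion $\subseteq$, and the tool is the Hahn--Banach separation theorem in its $w^*$ form. Write $S=w^*\text{-}\overline{\rm co}\{\pm\delta_t:t\in\mathbf T\}$, which is a $w^*$-closed convex balanced subset of $C_b(\mathbf T)^*$. Suppose, for contradiction, there were some $\Lambda\in B_{C_b(\mathbf T)^*}$ with $\Lambda\notin S$. Since $S$ is $w^*$-closed and convex and $\{\Lambda\}$ is $w^*$-compact, the separation theorem applied to the dual pair $\big(C_b(\mathbf T)^*,C_b(\mathbf T)\big)$ under the $w^*$-topology yields an element $f\in C_b(\mathbf T)$ (the predual, since $w^*$-continuous functionals on $C_b(\mathbf T)^*$ are exactly evaluations at points of $C_b(\mathbf T)$) and a scalar $\gamma$ with
\[
\langle\Lambda,f\rangle>\gamma\ge\sup_{\varphi\in S}\langle\varphi,f\rangle.
\]
Evaluating the supremum on the generating set $\{\pm\delta_t\}$ gives $\sup_{t\in\mathbf T}|f(t)|=\|f\|\le\gamma$, because $\langle\pm\delta_t,f\rangle=\pm f(t)$ and $S$ contains both signs. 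Hence $\langle\Lambda,f\rangle>\|f\|$, which forces $\|\Lambda\|>1$, contradicting $\Lambda\in B_{C_b(\mathbf T)^*}$. Therefore no such $\Lambda$ exists and $B_{C_b(\mathbf T)^*}\subseteq S$.

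The key steps in order are: (1) verify $\|\delta_t\|\le 1$ and deduce $\supseteq$ from Alaoglu plus convexity; (2) set up $S$ as a $w^*$-closed convex balanced set; (3) invoke $w^*$-separation to produce a separating functional that, crucially, lies in the \emph{predual} $C_b(\mathbf T)$ rather than the bidual — this is the point where one must remember that the $w^*$-topology is precisely the one whose continuous linear functionals are the evaluations at $C_b(\mathbf T)$; and (4) identify the supremum over the generators with the sup-norm $\|f\|$ to reach the contradiction. The main obstacle, and the only genuinely delicate point, is step (3): one must apply the separation theorem in the correct topology so that the separating functional is realized by an actual bounded continuous function $f$; separating in the norm topology would only give an element of $C_b(\mathbf T)^{**}$ and the argument would break. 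The balancedness of $S$ (built in by including both $+\delta_t$ and $-\delta_t$) is what converts the one-sided separation inequality into the two-sided sup-norm bound $\|f\|\le\gamma$, so it is worth flagging that the symmetric generating set is not incidental but essential.
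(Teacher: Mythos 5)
Your proof is correct and follows essentially the same route as the paper: the easy inclusion from $\|\delta_t\|\le 1$ plus $w^*$-closedness and convexity of the unit ball, and the reverse inclusion by Hahn--Banach separation in the $w^*$-topology, using that $(C_b(\mathbf T)^*,w^*)^*=C_b(\mathbf T)$ and that the symmetric generating set $\{\pm\delta_t\}$ is norming so the separation inequality collapses to $\|f\|>\|f\|$. Nothing is missing.
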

\begin{proof}
Since for each $f\in C_b(\mathbf T)$, \[\|f\|=\sup_{t\in\mathbf T}|f(t)|=\sup_{t\in\mathbf T}|\langle\delta_t,f\rangle|,\]  $\{\pm\delta_t: t\in\mathbf T\}$ is a norming set of $C_b(\mathbf T)$. It follows from the separation theorem of convex sets in locally convex spaces. Indeed, suppose to the contrary, that there is \[\varphi\in B_{C_b(\mathbf T)^*}\setminus w^*\text{-}\overline{\rm co}\{\pm\delta_t: t\in\mathbf T\}.\] By the separation theorem in the locally convex space $(C_b^*(\mathbf T),w^*)$, there exists $f\in C_b(\mathbf T)=(C_b^*(\mathbf T),w^*)^*$  such that $$\|f\|\geq\langle\varphi,f\rangle>\sup_{\psi\in w^*\text{-}\overline{\rm co}\{\pm\delta_t: t\in\mathbf T\}}\langle\psi,f\rangle=\sup_{t\in\mathbf T}\langle\pm\delta_t,f\rangle=\|f\|,$$ and this is a contradiction.
\end{proof}
\begin{theorem}
Let $X$ be a Banach space, $I=[0,a]$, $G\subseteq L_1(I,X)$ be a separable subset satisfying that there is $u\in L_p(I,\R^+)$ for some $p>0$ such that $\sup_{g\in G}\|g(t)\|\leq u(t)$ for almost all $t\in I$. Then $\mathbb J_G: [0,a]\rightarrow C_b(\Omega)$
is weakly  measurable.
\end{theorem}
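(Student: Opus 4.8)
The plan is to unwind the definition of weak measurability and reduce it, via Lemma 5.1, to the behaviour of $\mathbb J_G$ against the evaluation functionals. Recall that $\mathbb J_G$ is weakly measurable precisely when $t\mapsto\langle\varphi,\mathbb J_G(t)\rangle$ is ($\Sigma$-)measurable for every $\varphi\in C_b(\Omega)^*$. First I would record that the domination hypothesis forces $\|\mathbb J_G(t)\|=\sup_{g\in G}\|g(t)\|\le u(t)$, which is finite a.e., so each $\mathbb J_G(t)$ is a genuine element of $C_b(\Omega)$ and the scalar functions in question are dominated by $\|\varphi\|\,u(\cdot)$. Consequently the set $\mathcal M=\{\varphi\in C_b(\Omega)^*:t\mapsto\langle\varphi,\mathbb J_G(t)\rangle\ \text{is measurable}\}$ is a norm-closed linear subspace of $C_b(\Omega)^*$. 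By Lemma 5.1, $B_{C_b(\Omega)^*}=w^*\text{-}\overline{\rm co}\{\pm\delta_\omega:\omega\in\Omega\}$, so it would suffice to show that all evaluations $\delta_\omega$ lie in $\mathcal M$ and then to propagate this through the $w^*$-closed convex hull.

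Second, I would treat the evaluation functionals using separability of $G$. Choose a countable $\|\cdot\|_{L_1}$-dense set $\{g_n\}\subset G$. Since each $g_n$ is strongly measurable, every $t\mapsto\langle\omega,g_n(t)\rangle$ is measurable, and in fact $(t,\omega)\mapsto\langle\omega,g_n(t)\rangle$ is a Carath\'eodory function (measurable in $t$, continuous in $\omega$), hence jointly $\Sigma\otimes\mathcal B(\Omega)$-measurable. The crux here is to show, for each fixed $\omega$, that $\langle\delta_\omega,\mathbb J_G(t)\rangle=\sigma_{G(t)}(\omega)=\sup_{g\in G}\langle\omega,g(t)\rangle$ agrees almost everywhere with the countable supremum $\sup_n\langle\omega,g_n(t)\rangle$: the inequality $\ge$ is trivial, and the reverse a.e. inequality is obtained by approximating an arbitrary $g\in G$ by a subsequence of the $g_n$ converging in $L_1$, hence a.e. after passing to a further subsequence, together with the essential-supremum (Doob) selection principle which reduces the supremum over the uncountable family $G$ to a countable one. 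This makes $t\mapsto\sigma_{G(t)}(\omega)$ a countable supremum of measurable functions, hence measurable, and simultaneously yields joint measurability of $(t,\omega)\mapsto\sigma_{G(t)}(\omega)$ on $I\times\Omega$. Thus $\delta_\omega\in\mathcal M$ for every $\omega$, and by linearity all finite convex combinations of $\pm\delta_\omega$ belong to $\mathcal M$.

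Finally, the main obstacle is to pass from the $w^*$-dense span of the evaluations to an arbitrary $\varphi\in C_b(\Omega)^*$: measurability of scalar functions is preserved under sequential pointwise limits, whereas Lemma 5.1 supplies only $w^*$ (net) approximation, so $\mathcal M$ need not be $w^*$-closed a priori. I would resolve this by exploiting the joint measurability established above rather than raw net limits. Concretely, for $\varphi\in B_{C_b(\Omega)^*}$ I would use Lemma 5.1 and regularity to represent the action of $\varphi$ on the range of $\mathbb J_G$ as integration against a finite measure $\mu$ on $\Omega$, so that $\langle\varphi,\mathbb J_G(t)\rangle=\int_\Omega\sigma_{G(t)}(\omega)\,d\mu(\omega)$; joint measurability of $(t,\omega)\mapsto\sigma_{G(t)}(\omega)$ together with the domination $\sigma_{G(t)}(\omega)\le u(t)$ then lets Fubini's theorem guarantee that $t\mapsto\langle\varphi,\mathbb J_G(t)\rangle$ is measurable. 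The delicate points to verify are the legitimacy of the integral representation on the range of $\mathbb J_G$ (this is exactly where Lemma 5.1 is indispensable) and the integrability needed to invoke Fubini, for which the hypothesis $u\in L_p$ on the finite measure space $I$ provides the domination. This would show $\mathcal M=C_b(\Omega)^*$ and complete the proof.
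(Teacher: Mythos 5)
Your first two steps coincide with the paper's: separability of $G$ gives measurability of $t\mapsto\langle\delta_{x^*},\mathbb J_G(t)\rangle$ via a countable dense subfamily, and linearity handles finite convex combinations of evaluations. The genuine gap is in your final step. You propose to represent an arbitrary $\varphi\in C_b(\Omega)^*$ as integration against a finite countably additive Borel measure $\mu$ on $\Omega$ and then invoke Fubini. But $\Omega=B_{X^*}$ with the norm topology is not compact when $X$ is infinite dimensional, and $C_b(\Omega)^*$ is not the space of finite Borel measures on $\Omega$: it is the space of regular Borel measures on the Stone--\v{C}ech compactification $\beta\Omega$ (equivalently, of finitely additive regular set functions on $\Omega$). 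Lemma 5.1 only says that $B_{C_b(\Omega)^*}$ is the $w^*$-closed convex hull of $\{\pm\delta_{x^*}: x^*\in\Omega\}$; the $w^*$-limit of a net of finitely supported measures can charge $\beta\Omega\setminus\Omega$, and such a limit admits no integral representation over $\Omega$. For instance, with $X=\ell_2$, any $w^*$-cluster point $\varphi$ of $\{\delta_{e_n}\}$ lies in that hull but is not of the form $f\mapsto\int_\Omega f\,d\mu$ for any countably additive finite Borel measure $\mu$ on $\Omega$. Hence the identity $\langle\varphi,\mathbb J_G(t)\rangle=\int_\Omega\sigma_{G(t)}(x^*)\,d\mu(x^*)$ on which your Fubini argument rests is unavailable, and restricting to the range of $\mathbb J_G$ does not rescue it, since that range is not assumed separable (if it were, Pettis's theorem would already give strong measurability and the theorem would be trivial).

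The paper closes this gap by a different device: it keeps the approximating net $\varphi_\iota\in{\rm co}\{\pm\delta_{x^*}\}$, observes that the measurable functions $|\langle\varphi_\iota,\mathbb J_G(\cdot)\rangle|^{p/2}$ are dominated by $u^{p/2}\in L_2(I)$ and hence form a bounded net in $L_2(I,\R)$, extracts a weakly convergent subnet, and identifies the weak limit --- which is measurable by virtue of being an element of $L_2$ --- with the pointwise limit $|\langle\varphi,\mathbb J_G(\cdot)\rangle|^{p/2}$; the sign of $\langle\varphi,\mathbb J_G(\cdot)\rangle$ is recovered by the same weak-compactness argument applied to ${\rm sgn}(\langle\varphi_\iota,\mathbb J_G(\cdot)\rangle)$. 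In other words, the non-sequential $w^*$-approximation is converted into convergence to an a priori measurable object inside a reflexive function space, rather than into an integral over $\Omega$. If you wish to keep your structure, you must replace the measure representation and Fubini step by an argument of this kind.
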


\begin{proof}
We want to prove that for each $\varphi\in C_b(\Omega)^*$, the real-valued function
$\langle\varphi,\mathbb J_G(\cdot)\rangle$ is measurable on $I$.

Since $G\subseteq L_1(I,X)$ is  separable, for every $x^*\in \Omega=B_{X^*}$, the real-valued function $\mathbb J_G(\cdot)(x^*)$ defined for $t\in I$ by \[\mathbb J_G(t)(x^*)=\sup_{g\in G}\langle x^*,g(t)\rangle\]
is measurable. Indeed, let $\{g_n\}$ be a dense sequence of $G$. Then
\[\mathbb J_{\{g_n\}}(t)(x^*)\equiv\sup_{n}\langle x^*,g_n(t)\rangle=\mathbb J_G(t)(x^*),\;{\rm for\;almost\;all\;}t\in I.\]
Thus, the measurability of $\mathbb J_G(\cdot)(x^*)$ follows from the measurability of $\mathbb J_{\{g_n\}}(\cdot)(x^*).$

 Assume that $\varphi\in C_b(\Omega)^*$ with $\|\varphi\|=1$. By Lemma 5.1, there is a net $\{\varphi_\iota\}\subset{\rm co}\{\pm\delta_{x^*}:x^*\in\Omega\}$ such that
$\varphi_\iota\longrightarrow\varphi$ in the $w^*$-topology of $C_b(\Omega)^*$. Therefore, \begin{equation}\langle\varphi_\iota,\mathbb J_G(t)\rangle\longrightarrow\langle\varphi,\mathbb J_G(t)\rangle,\;t\in I.\end{equation}
Since for each $x^*\in \Omega\equiv B_{X^*}$, $$\langle\delta_{x^*},\mathbb J_G(t)\rangle=\langle \mathbb J_G(t),x^*\rangle=\sup_{g\in G}\langle x^*,g(t)\rangle$$ is measurable, $\langle\varphi_\iota,\mathbb J_G(t)\rangle$ and ${\rm sgn}(\langle\varphi_\iota,\mathbb J_G(t)\rangle)$ are measurable for each $\varphi_\iota$.  Let $E=\{t\in I: \langle\varphi,\mathbb J_G(t)\rangle\neq0\}$. Then
 $$sgn(\langle\varphi_\iota,\mathbb J_G(t)\rangle)\cdot\chi_E\longrightarrow sgn(\langle\varphi,\mathbb J_G(t)\rangle),\;\;t\in I.$$  On the other hand, since $\{sgn(\langle\varphi_\iota,\mathbb J_G(\cdot)\rangle)\}\subset L_2(I,\R)$ is a bounded net, there is a subnet of it weakly convergent to an element $v\in L_2(I,\R)$. Clearly, \begin{equation}v(t)\chi_E(t)=sgn(\langle\varphi,\mathbb J_G(t)\rangle)\;{\rm for \;almost\; all\;} t\in I.\end{equation}

Let $u_\iota=|\langle\varphi_\iota,\mathbb J_G(\cdot)\rangle|$.  Since \[|\langle\varphi_\iota,\mathbb J_G(t)\rangle|\leq\|\mathbb J_G(t)\|=\sup_{x^*\in\Omega, g\in G}|\langle x^*,g(t)\rangle|=\sup_{ g\in G}\|g(t)\|\leq u(t) \;a.e.,\]
and since $u\in L_p(I,\R)$, $\{u_\iota^{\frac{p}2}\}\subset L_2(I,\R)$ is a bounded net which is pointwise  convergent to $|\langle\varphi,\mathbb J_G(t)\rangle|^{\frac{p}{2}}$ for $t\in I$. This entails that $\{u_\iota^{\frac{p}2}\}$ admits a subnet weakly convergent to some element $w\in L_2(I,\R)$. Clearly, $w(t)=|\langle\varphi,\mathbb J_G(t)\rangle|^{\frac{p}{2}}$ for almost all $t\in I$. Therefore, $|\langle\varphi,\mathbb J_G(\cdot)\rangle|^{\frac{p}{2}}\in L_2(I,\R)$. Consequently, $|\langle\varphi,\mathbb J_G(\cdot)\rangle|$ is measurable. Hence, $\chi_E$ is measurable. This and (5.2) entail that $sgn(\langle\varphi,\mathbb J_G(\cdot)\rangle)$ is measurable. Since \[\langle\varphi,\mathbb J_G(\cdot)\rangle=sgn(\langle\varphi,\mathbb J_G(\cdot)\rangle)|\langle\varphi,\mathbb J_G(\cdot)\rangle|,\]  $\langle\varphi,\mathbb J_G(\cdot)\rangle$ is measurable.
\end{proof}

\begin{theorem}
Let $G\subset C(I,X)$ be a nonempty equi-continuous subset. Then $\mathbb J_G: I\rightarrow C_b(\Omega)$ is continuous, hence, strongly measurable.
\end{theorem}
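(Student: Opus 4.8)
The plan is to prove directly that $\mathbb J_G$ is norm-continuous on $I$; strong measurability will then follow for free. The key observation is that, writing $G(t)=\{g(t):g\in G\}$, we have $\mathbb J_G(t)=J(G(t))=\sigma_{G(t)}$, so the whole problem reduces to controlling how the support functions $\sigma_{G(t)}$ vary with $t$ in the sup-norm of $C_b(\Omega)$.

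First I would fix $t_0\in I$ and estimate $\|\mathbb J_G(t)-\mathbb J_G(t_0)\|_{C_b(\Omega)}$ by working pointwise on $\Omega$. For each $x^*\in\Omega$, applying the elementary inequality $|\sup_\iota a_\iota-\sup_\iota b_\iota|\le\sup_\iota|a_\iota-b_\iota|$ to the families $\{\langle x^*,g(t)\rangle\}_{g\in G}$ and $\{\langle x^*,g(t_0)\rangle\}_{g\in G}$ gives
\[
\bigl|\mathbb J_G(t)(x^*)-\mathbb J_G(t_0)(x^*)\bigr|\le\sup_{g\in G}\bigl|\langle x^*,g(t)-g(t_0)\rangle\bigr|\le\sup_{g\in G}\|g(t)-g(t_0)\|,
\]
where the last step uses $\|x^*\|\le1$. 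Since the right-hand side is independent of $x^*$, taking the supremum over $x^*\in\Omega$ yields
\[
\|\mathbb J_G(t)-\mathbb J_G(t_0)\|_{C_b(\Omega)}\le\sup_{g\in G}\|g(t)-g(t_0)\|.
\]

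Next I would invoke the equi-continuity of $G$: given $\varepsilon>0$ there is $\delta>0$ so that $|t-t_0|<\delta$ forces $\|g(t)-g(t_0)\|\le\varepsilon$ simultaneously for every $g\in G$, hence $\sup_{g\in G}\|g(t)-g(t_0)\|\le\varepsilon$. Combined with the display above, this gives $\|\mathbb J_G(t)-\mathbb J_G(t_0)\|_{C_b(\Omega)}\le\varepsilon$ whenever $|t-t_0|<\delta$; that is, $\mathbb J_G$ is continuous at $t_0$, and since $t_0$ was arbitrary, $\mathbb J_G$ is continuous on $I$.

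Finally I would deduce strong measurability. A continuous map of the interval $I$ into the Banach space $C_b(\Omega)$ has separable range (the continuous image of a separable space) and is weakly continuous, hence weakly measurable, so Pettis's measurability theorem (Lemma 4.4) applies. I do not anticipate any serious obstacle: the entire argument rests on the single uniform estimate above, and the only point demanding care is the passage from the pointwise-on-$\Omega$ bound to the sup-norm bound, which is immediate because the majorant $\sup_{g\in G}\|g(t)-g(t_0)\|$ does not depend on $x^*$; equi-continuity is precisely the hypothesis that makes this majorant small uniformly in $g$.
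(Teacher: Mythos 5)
Your proposal is correct and follows essentially the same route as the paper: reduce $\|\mathbb J_G(t)-\mathbb J_G(t_0)\|_{C_b(\Omega)}$ to $\sup_{g\in G}\|g(t)-g(t_0)\|$ via the elementary inequality for suprema of support functions, then invoke equi-continuity. If anything, your pointwise-in-$x^*$ estimate followed by a supremum over $\Omega$ is a slightly cleaner write-up of the paper's own computation, and the concluding appeal to separable range plus Pettis is exactly the standard justification the paper leaves implicit.
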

\begin{proof}
It follows from the equi-continuity of $G$ on $I$ that for all $\eps>0$ and $t_0\in I$, there is $\delta>0$ such that
\[t\in I, |t-t_0|<\delta\Longrightarrow \|g(t)-g(t_0)\|<\eps,\;\forall\;g\in G.\]
Thus, $t\in I, |t-t_0|<\delta$ imply
\[\|\mathbb J_{G}(t)-\mathbb J_{G}(t_0)\|=\Big|\sup_{x^*\in\Omega, g\in G}\langle x^*, g(t)\rangle-\sup_{x^*\in\Omega, g\in G}\langle x^*, g(t_0)\rangle\Big|\]
\[\leq\Big|\sup_{x^*\in\Omega, g\in G}\langle x^*, g(t)-g(t_0)\rangle\Big|=\sup_{g\in G}\|g(t)-g(t_0)\|\leq\eps.\]
\end{proof}
The next result follows from the theorem above and Corollary 4.9.
\begin{corollary}
Let $X$ be a  Banach space, $\mu$ be a convex MNC on $X$, $I=[0,a]$, and $G\subseteq C(I,X)$ be a nonempty equi-continuous subset.   Then the following inequality holds
\begin{equation}
\mu\Big(\int_0^t G(s)ds\Big)\leq \frac{1}t\int_0^t \mu\Big(tG(s)\Big)ds,\;\;\forall 0<t\leq a.
\end{equation}
In particular,
\begin{equation}
\mu\Big(\int_0^t G(s)ds\Big)\leq\int_0^t \mu\Big(G(s)\Big)ds,\;\;\forall 0<t\leq a.
\end{equation}
if one of the following conditions is satisfied.

i) $0<t\leq\min\{1, a\};$

ii) $\mu$ is a sublinear MNC.
\end{corollary}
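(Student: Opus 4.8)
The plan is to derive the corollary as a direct specialization of Theorem 4.8 and Corollary 4.9, so the only genuine work is to check that the single hypothesis of equi-continuity already supplies everything those results demand: strong measurability of $\mathbb J_G$, boundedness of $G$ as a subset of $L_1(I,X)$, and an integrable dominating function $\psi$ with $\sup_{g\in G}\|g(t)\|\le\psi(t)$ a.e. First I would invoke Theorem 5.3 verbatim: since $G\subset C(I,X)$ is equi-continuous, the mapping $\mathbb J_G:I\to C_b(\Omega)$ is continuous, and in particular strongly measurable. This settles the measurability requirement shared by Theorem 4.8 and Corollary 4.9.

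The key observation, and the step I expect to carry the whole argument, is the identity
\[\|\mathbb J_G(t)\|_{C_b(\Omega)}=\sup_{g\in G}\|g(t)\|,\qquad t\in I.\]
Recall from Definition 4.1 that $\mathbb J_G(t)=J(G(t))=\sigma_{G(t)}$ is the support function of $G(t)=\{g(t):g\in G\}$ restricted to $\Omega=B_{X^*}$. For every $x^*\in B_{X^*}$ and $g\in G$ one has $|\langle x^*,g(t)\rangle|\le\|g(t)\|$, whence $|\sigma_{G(t)}(x^*)|\le\sup_{g\in G}\|g(t)\|$; conversely, choosing $g$ with $\|g(t)\|$ arbitrarily close to the supremum and a norming functional $x^*\in B_{X^*}$ for $g(t)$ shows the bound is attained, giving the displayed equality. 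Now set $\psi(t)\equiv\|\mathbb J_G(t)\|_{C_b(\Omega)}$. Since $\mathbb J_G$ is continuous by Theorem 5.3 and the norm of $C_b(\Omega)$ is continuous, $\psi$ is continuous on the compact interval $I=[0,a]$, hence bounded, hence $\psi\in L_1(I,\R^+)$. It follows that each $g\in G$ lies in $L_1(I,X)$ with $\|g\|_{L_1}\le\int_0^a\psi\,dm$, so $G$ is a bounded subset of $L_1(I,X)$ dominated by $\psi$ in exactly the sense required.

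With all hypotheses verified, the conclusion is immediate. Theorem 4.8 applies and yields the inequality (5.3), while Corollary 4.9 applies and yields (5.4) under either condition i) or condition ii). The main (and essentially the only) obstacle is conceptual rather than computational: the boundedness of $G$ and the dominating function $\psi$ are not additional standing assumptions but are automatically furnished by equi-continuity, once one recognizes that $\psi(t)=\|\mathbb J_G(t)\|_{C_b(\Omega)}$ is continuous on a compact interval. After the displayed identity is in hand, the remainder of the proof is a direct citation of the two earlier results.
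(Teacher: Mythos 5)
Your proof is correct and takes essentially the same route as the paper, which proves this corollary simply by citing Theorem 5.3 together with Corollary 4.9; your only addition is to spell out why the domination hypothesis of Corollary 4.9 is satisfied, via the identity $\|\mathbb J_G(t)\|_{C_b(\Omega)}=\sup_{g\in G}\|g(t)\|$ and the continuity of $\mathbb J_G$ on the compact interval $I$. One small caveat: equi-continuity alone does not force pointwise boundedness of $G$ (constant functions of arbitrarily large norm form an equi-continuous family), so the finiteness of $\sup_{g\in G}\|g(t)\|$ is really the implicit standing assumption of Definition 4.1 needed for $\mathbb J_G$ to be $C_b(\Omega)$-valued at all; granting that, as the paper tacitly does, your verification is complete.
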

\begin{remark}
Corollary 5.4 is a generalization of K. Goebel  et al. In 1970, K. Goebel and W. Rzymowski \cite{goe} showed that the inequality (5.4) holds for equi-continuous subsets $G\subset C(I,X)$ and for the Hausdorff MNC $\beta$, i.e. when $\mu=\beta$. (See, also, \cite{rz}.) In 1980, J. Bana\'{s} and  K. Goebel \cite{bag} showed the inequality (5.4) holds for equi-continuous subsets $G\subset C(I,X)$ and for homogeneous MNC $\mu$.
\end{remark}
Before stating next result, we recall the notions of regulated functions and of equi-regulated sets of such functions (see, for example, \cite{ols}).
\begin{definition}
i) A function $f: [a,b]\rightarrow X$ is said to be regulated provided for every $t\in[a,b)$ the right-sided limit $\lim_{s\rightarrow t^+}f(s)\equiv f(t^+)$ exists and for every $t\in(a,b]$ the left-sided limit $\lim_{s\rightarrow t^-}f(s)\equiv f(t^-)$ exists.

We denote by $R([a,b], X)$ the Banach space of all regulated functions defined on the interval $[a,b]$ endowed with the sup-norm.

ii) A nonempty subset $G\subset R(I,X)$ is called equi-regulated if
\[\forall t\in(a,b], \forall \eps>0,\; \exists \delta>0,\;\forall g\in G,\; \forall t_1,t_2\in(t-\delta,t)\cap[a,b], \|g(t_2)-g(t_1)\|\leq\eps, \]
\[\forall t\in[a,b), \forall \eps>0,\; \exists \delta>0,\;\forall g\in G,\; \forall t_1,t_2\in(t,t+\delta)\cap[a,b], \|g(t_2)-g(t_1)\|\leq\eps. \]
\end{definition}

\begin{theorem}
Let $X$ be a Banach space and $I=[0,a]$. Assume that $G\subset R(I, X)$ is a separable equi-regulated set. Then the mapping
$\mathbb J_G: I\rightarrow C_b(\Omega)$ is strongly measurable.
\end{theorem}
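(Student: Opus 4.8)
The plan is to show that $\mathbb J_G$ is in fact a \emph{regulated} function with values in the Banach space $C_b(\Omega)$, and then to deduce strong measurability from the fact that Banach-valued regulated functions on a compact interval are uniform limits of step functions. The engine is the same $1$-Lipschitz estimate used in the proof of Theorem~5.3: for all $t_1,t_2\in I$,
\[
\|\mathbb J_G(t_2)-\mathbb J_G(t_1)\|_{C_b(\Omega)}=\sup_{x^*\in\Omega}\Big|\sup_{g\in G}\langle x^*,g(t_2)\rangle-\sup_{g\in G}\langle x^*,g(t_1)\rangle\Big|\leq\sup_{g\in G}\|g(t_2)-g(t_1)\|,
\]
so the oscillation of the $C_b(\Omega)$-valued map $\mathbb J_G$ is controlled, uniformly over the chosen points, by the oscillation of the family $G$ itself.

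First I would check that $\mathbb J_G$ is genuinely $C_b(\Omega)$-valued, i.e. that $\sup_{g\in G}\|g(t)\|<\infty$ for every $t\in I$; granting finiteness at a single point, equi-regulation propagates it, since covering $I$ by finitely many one-sided intervals on which $\sup_{g\in G}\|g(t_2)-g(t_1)\|\leq1$ and chaining the estimate yields a uniform bound on all of $I$. Then I would fix $t\in(0,a]$ and $\eps>0$, use equi-regulation to produce $\delta>0$ with $\|g(t_2)-g(t_1)\|\leq\eps$ for every $g\in G$ and every $t_1,t_2\in(t-\delta,t)\cap I$, and feed this into the displayed estimate to obtain $\|\mathbb J_G(t_2)-\mathbb J_G(t_1)\|\leq\eps$ for such $t_1,t_2$. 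This is exactly the Cauchy criterion for the left-hand limit, so by completeness of $C_b(\Omega)$ the limit $\mathbb J_G(t^-)$ exists; the right-hand limits are handled identically, whence $\mathbb J_G\in R(I,C_b(\Omega))$.

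Finally, a regulated function into a Banach space is a uniform limit of step (finitely-valued) functions; each step function is simple and therefore strongly measurable, and a uniform, hence almost-everywhere, limit of strongly measurable functions is strongly measurable, which gives the conclusion. If one prefers a route that explicitly uses the separability hypothesis, one may instead invoke Theorem~5.2 for weak measurability (the uniform bound above provides a constant, hence $L_p$, majorant $u$), observe that the range $\mathbb J_G(I)$ is separable because $\mathbb J_G$ is regulated, and then appeal to Pettis's measurability theorem.

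I expect the main obstacle to be not the transfer of the modulus through $J$, which is routine, but the two structural facts that bracket it: ensuring $\sup_{g\in G}\|g(t)\|$ is finite \emph{everywhere} (so that the regulated-function framework even applies), and the classical identification of $R(I,C_b(\Omega))$ with the uniform closure of the step functions, which must be cited or reproved in the Banach-valued setting. Once these are in place the argument is a direct parallel of the equi-continuous case (Theorem~5.3), with ``continuous'' weakened to ``regulated.''
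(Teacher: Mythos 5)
Your proposal is correct, but its main route is genuinely different from the paper's. The paper keeps the separability hypothesis at center stage: it first invokes Theorem~5.2 (which needs $G$ separable) to get \emph{weak} measurability of $\mathbb J_G$, then uses equi-regulation to produce, for each $\eps>0$, a finite cover of $I$ by one-sided intervals on which $\sup_{g\in G}\|g(t_2)-g(t_1)\|<\eps$; the same $1$-Lipschitz estimate you use shows $\mathbb J_G(I)$ is covered by finitely many $\eps$-balls, hence totally bounded and separable in $C_b(\Omega)$, and Pettis's theorem (Lemma~4.4) finishes. You instead upgrade the pointwise total-boundedness conclusion to a structural one: the same estimate plus the one-sided Cauchy criterion shows $\mathbb J_G$ is itself a \emph{regulated} $C_b(\Omega)$-valued function, and then strong measurability follows from the classical fact that Banach-valued regulated functions on a compact interval are uniform limits of step functions. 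This is more self-contained (no Pettis, no Theorem~5.2) and, notably, does not use separability of $G$ at all, at the price of importing the step-function characterization of $R(I,Z)$, which you correctly flag as needing a citation (it is standard, e.g.\ Dieudonn\'e). Your fallback route (weak measurability via Theorem~5.2, separable range because $\mathbb J_G$ is regulated, then Pettis) is essentially the paper's argument with a different reason for separability of the range. One small point on which you are actually more careful than the paper: the paper asserts that equi-regulation forces $\sup_{g\in G,\,t\in I}\|g(t)\|<\infty$, which is not literally implied by the hypotheses (constant families show this), whereas you correctly treat the pointwise finiteness of $\sup_{g\in G}\|g(t)\|$ as something to be secured first (it is built into Definition~4.1) and then propagated by chaining the oscillation bounds over a finite cover.
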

\begin{proof}
By the definition of regulated functions and equi-regulated sets, it is easy to see that $G$ is separable in $L_1(I,X)$ and  bounded in $R(I,\R)$, i.e. \[\sup_{g\in G}\|g\|_{R(I,\R)}=\sup_{g\in G, t\in I}\|g(t)\|_X=c<\infty.\]
By Theorem 5.2, $\mathbb J_G$ is weakly measurable.
To prove that $\mathbb J_G$ is strongly measurable, it suffices to show that $\mathbb J_G(I)$  is separable in $C_b(\Omega)$.

Through a routine argument of compactness of $I$ similar to  L. Olszowy and T. Zajac \cite[Th. 3.1]{ols}, we obtain that
for all $\eps>0$, there is an  open cover of $I$ consisting of finitely many open intervals of the form
\[\Big\{[0,\delta_0), (s_1-\delta_1, s_1+\delta_1),\cdots, (s_{n-1}-\delta_{n-1}, s_{n-1}+\delta_{n-1}), (s_n, a]\Big\},\]
such that
\[\sup_{g\in G}\|g(t_2)-g(t_1)\|<\eps,\;\forall t_1,t_2\in[0,\delta_0), {\rm or,}\;(s_n, a],\;\;\;\;\;\;\;\;\;\;\;\;\;\; \]
\[\sup_{g\in G}\|g(t_2)-g(t_1)\|<\eps,\;\forall t_1,t_2\in(s_1-\delta_j, s_j), {\rm or,}\;(s_j, s_j+\delta_j),\]
 \[{\rm where\;\;}j=1,2,\cdots, n-1,\;{\rm for\; some\;} n\in\N;\;
  \delta_j>0,  \;j=0,1,\cdots,n-1;\;\;\;\;\;\;\;\;\;\;\;\;\;\;\;\;\;\;\;\;\;\;\;\;\;\;\;\;\;\;\;\;\;\;\;\;\;\;\;\;\;\;\;\;\;\;\;\;\;\;\;\;\;\;\;\;\;\;\;\;\;\;\;\;\;\;\;\;\;\;\;\;\]
  \[{\rm and\;\;}\delta_0, s_n, s_j, s_j-\delta_j, s_j+\delta_j\in(0,a),\; j=1,\cdots,n-1.\;\;\;\;\;\;\;\;\;\;\;\;\;\;\;\;\;\;\;\;\;\;\;\;\;\;\;\;\;\;\;\;\;\;\;\;\]
Put
\[I_0^+=[0,\delta_0),\;\;  I_n^+=(s_n, a],\]
\[I_j^+=(s_j, s_j+\delta_j),\;I_j^-=(s_1-\delta_j, s_j), \;  j=1,2,\cdots, n-1.\]
Therefore,
\[\|\mathbb J_G(t_2)-\mathbb J_G(t_1)\|\leq\sup_{g\in G, x^*\in\Omega}\langle x^*,g(t_2)-g(t_1)\rangle\]\[\;\;\;\;\;\;\;\;\;\;\;\;\;\;\;\;\;\;\;\;\;\;=\sup_{g\in G}\|g(t_2)-g(t_1)\|<\eps,\]
whenever $t_1, t_2\in I_j^+,\;j=0,1,\cdots,n$, or, $t_1, t_2\in I_j^-,\;j=1, 2\cdots,n-1.$
Thus, each element of $\mathbb J_G(I_j^+),\;j=0,1,\cdots,n$ and $\mathbb J_G(I_j^-),\;j=1,2,\cdots,n-1$ is contained in a ball of $C_b(\Omega)$ with radius $\eps$.  Consequently,
$\mathbb J_G(I)$ contained in the union of $3n$ balls of $C_b(\Omega)$ with radius $\eps$. This entails that $\mathbb J_G(I)$ is a relatively compact  subset of $C_b(\Omega)$, hence, it is separable.
\end{proof}
The result below follows from the theorem above and Corollary 4.9.
\begin{corollary}
Let $X$ be a  Banach space, $\mu$ be a convex MNC on $X$, $I=[0,a]$, and $G\subseteq R(I,X)$ be a nonempty separable equi-regulated subset.   Then the following inequality holds
\begin{equation}
\mu\Big(\int_0^t G(s)ds\Big)\leq \frac{1}t\int_0^t \mu\Big(tG(s)\Big)ds,\;\;\forall 0<t\leq a.
\end{equation}
In particular,
\begin{equation}
\mu\Big(\int_0^t G(s)ds\Big)\leq\int_0^t \mu\Big(G(s)\Big)ds,\;\;\forall 0<t\leq a.
\end{equation}
if one of the following conditions is satisfied.

i) $0<t\leq\min\{1, a\};$

ii) $\mu$ is a sublinear MNC.
\end{corollary}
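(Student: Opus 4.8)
The plan is to obtain this corollary as a purely formal consequence of the integral inequalities of Section~4, so that all of the analytic content is carried by Theorem~5.7 together with Theorem~4.8 and Corollary~4.9. Concretely, I would verify that a separable equi-regulated family $G\subset R(I,X)$ satisfies every hypothesis of Theorem~4.8 and Corollary~4.9, and then simply invoke those results. The only hypothesis that is not immediate, namely strong measurability of $\mathbb J_G:I\to C_b(\Omega)$, is exactly the conclusion of Theorem~5.7, so it may be quoted verbatim.

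Next I would supply the remaining hypotheses, which concern integrability and domination. Since $G$ is equi-regulated on the compact interval $I=[0,a]$, the compactness argument already carried out in the proof of Theorem~5.7 shows that $G$ is uniformly bounded, i.e. there is a constant $c<\infty$ with $\sup_{g\in G,\,t\in I}\|g(t)\|_X=c$. Each $g\in G$ is regulated, hence a uniform limit of step functions and in particular strongly measurable; being bounded on $I$ it also satisfies $\int_I\|g\|\,dm<\infty$, so by Lemma~4.3 it is Bochner integrable. Thus $G\subset L_1(I,X)$ and $G$ is a bounded subset of it. Taking $\psi\equiv c$ then gives $\psi\in L_1(I,\R^+)$ with $\sup_{g\in G}\|g(t)\|\le\psi(t)$ for all $t\in I$, which is precisely the dominating-function requirement imposed in Section~4.

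With strong measurability of $\mathbb J_G$ and the integrable dominant $\psi$ in place, I would finish in two steps. Applying Theorem~4.8 to $G$ yields the first inequality $\mu\Big(\int_0^t G(s)\,ds\Big)\le \frac1t\int_0^t \mu\Big(tG(s)\Big)\,ds$ for all $0<t\le a$. Applying Corollary~4.9 to $G$ under hypothesis i) (so that $0<t\le\min\{1,a\}$) or hypothesis ii) (so that $\mu$ is sublinear) then yields the ``in particular'' inequality $\mu\Big(\int_0^t G(s)\,ds\Big)\le \int_0^t \mu\Big(G(s)\Big)\,ds$, which completes the proof.

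I do not anticipate a genuine obstacle here: the difficulty has been front-loaded into Theorem~5.7 (separability of $\mathbb J_G(I)$ obtained from a finite equi-regulated cover of $I$) and into the representation theorem underlying Section~4. The single point that deserves explicit care is the passage from equi-regulatedness to the uniform bound $c$, together with the attendant check that $G\subset L_1(I,X)$ admits the constant dominant $\psi\equiv c$; once that is recorded, the corollary follows immediately from the two cited results.
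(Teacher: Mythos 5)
Your proposal is correct and follows the paper's own route: the paper likewise derives this corollary directly from Theorem 5.7 (strong measurability of $\mathbb J_G$ for separable equi-regulated $G$) combined with Theorem 4.8 and Corollary 4.9. Your explicit verification of the remaining hypotheses of Section 4 --- uniform boundedness of $G$ from equi-regulatedness, membership in $L_1(I,X)$, and the constant dominant $\psi\equiv c$ --- merely spells out what the paper leaves implicit in its one-line justification.
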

\begin{remark}
Corollary 5.8 is a generalization of L. Olszowy and T. Zajac \cite[Th. 3.1]{ols} in 2020, where they showed that the inequality (5.6) holds for the Hausdorff MNC $(\mu=)\beta$.
\end{remark}

For a finite measure space $(\Gamma,\Sigma,\eta)$ and a Banach space $X$, we denote by $L_1(\Gamma,\eta, X)$ the Banach space of all $X$-valued $\eta$-integrable functions $f$ endowed with the $L_1$-norm $\|f\|=\int_\Gamma \|f(\gamma)\|d\eta$. In particular, if $\Gamma=I=[a,b]\subset\R$ and $\eta$ is  the Lebesgue measure on $\R$, then simply denote it by $L_1(I,X)$.

The following useful notion of uniform measurability of functions was introduced by M. Kunze and G. Schl\"{u}chtermann \cite[Def. 3.16]{kun}.
\begin{definition}
A bounded set $G\subset L_1(\Gamma, \eta, X)$ is said to be uniformly $\eta$-measurable provided for all $\eps>0$ and $A\in\Sigma$ there exist $A_\eps\in\Sigma$ and mutually disjoint $A_1, \cdots A_n\in\Sigma$ with
$\cup_{j=1}^n A_j=A_\eps$ and with $\eta(A\setminus A_\eps)<\eps$ such that for $j=1,2,\cdots,n$ we can choose $\gamma_j\in A_j$ satisfying
\begin{equation}
\sup_{g\in G}\|g(\cdot)\chi_{A_\eps}-\sum_{j=1}^ng(\gamma_j)\chi_{A_j}\|_{L_1}<\eps.
\end{equation}
\end{definition}

Given a subset $G\subset L_0(\Gamma,\eta, X)$ of $X$-valued $\eta$-measurable functions, we denote by $G(\gamma)=\{g(\gamma):g\in G\},\,\gamma\in\Gamma$; and let $\mathbb J_G: \Gamma\rightarrow C_b(\Omega)$ be again defined for $\gamma\in\Gamma$ by
\begin{equation}
\mathbb J_G(\gamma)(x^*)=J\Big(G(\gamma)\Big)(x^*)=\sup_{g\in G}\langle x^*,g(\gamma)\rangle=\sigma_{G(\gamma)}(x^*),\;x^*\in\Omega\equiv B_{X^*}.
\end{equation}

\begin{theorem}
Let $G\subset L_0(\Gamma,\eta, X)$ be a  subset with \begin{equation}\sup_{g\in G}\|g(\gamma)\|<\infty,\;{\rm for\;almost\;all}\;\gamma\in\Gamma.\end{equation}
If $G$ is uniformly $\eta$-measurable, then $\mathbb J_G: \Gamma\rightarrow C_b(\Omega)$ is strongly measurable.
\end{theorem}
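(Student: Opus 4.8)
The plan is to exhibit $\mathbb J_G$ as an almost-everywhere limit of $C_b(\Omega)$-valued simple functions built from the uniform $\eta$-measurability of $G$, and then to conclude by Definition 4.2 ii) (equivalently, by Pettis's theorem, Lemma 4.4). First I would record the reformulation. By the boundedness hypothesis the set $G(\gamma)=\{g(\gamma):g\in G\}$ is bounded for almost all $\gamma$, so $\mathbb J_G(\gamma)=\sigma_{G(\gamma)}=J(\overline{\rm co}\,G(\gamma))$ lies in the cone $\mathfrak S(\Omega)\subset C_b(\Omega)$, and Theorem 2.3 (ii) gives, for any parameters $\gamma,\gamma'$,
\[\|\mathbb J_G(\gamma)-\mathbb J_G(\gamma')\|_{C_b(\Omega)}=d_H\big(\overline{\rm co}\,G(\gamma),\overline{\rm co}\,G(\gamma')\big).\]
Hence strong measurability of $\mathbb J_G$ is equivalent to that of the $(\mathscr C(X),d_H)$-valued map $\gamma\mapsto\overline{\rm co}\,G(\gamma)$, and it suffices to produce simple approximants converging a.e.

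For the construction, for each $m\in\N$ I would apply Definition 5.10 with $A=\Gamma$ and $\eps=2^{-m}$, obtaining a set $A^{(m)}$ with $\eta(\Gamma\setminus A^{(m)})<2^{-m}$, a finite measurable partition $\{A^{(m)}_j\}_{j=1}^{n_m}$ of $A^{(m)}$, and reference points $\gamma^{(m)}_j\in A^{(m)}_j$ with $\sup_{g\in G}\|g\chi_{A^{(m)}}-\widehat g^{(m)}\|_{L_1}<2^{-m}$, where $\widehat g^{(m)}=\sum_j g(\gamma^{(m)}_j)\chi_{A^{(m)}_j}$. Setting $s_m=\sum_{j=1}^{n_m}\sigma_{G(\gamma^{(m)}_j)}\chi_{A^{(m)}_j}$ yields a $C_b(\Omega)$-valued simple function whose values all lie in the fixed countable set $C=\{\sigma_{G(\gamma^{(m)}_j)}:m\in\N,\;1\le j\le n_m\}$. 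Writing $k(\gamma)$ for the index of the block containing $\gamma$, the elementary inequality $\sup_g a_g-\sup_g b_g\le\sup_g(a_g-b_g)$ applied to $a_g=\langle x^*,g(\gamma)\rangle$ and $b_g=\langle x^*,g(\gamma_{k(\gamma)})\rangle$ (and symmetrically) gives the pointwise bound
\[\|\mathbb J_G(\gamma)-s_m(\gamma)\|_{C_b(\Omega)}\le\sup_{g\in G}\|g(\gamma)-\widehat g^{(m)}(\gamma)\|,\qquad\gamma\in A^{(m)}.\]

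The heart of the proof is the convergence $s_m\to\mathbb J_G$, and here lies the main obstacle. Definition 5.10 only controls $\sup_{g}\int\|g-\widehat g^{(m)}\|$, whereas the displayed pointwise bound asks me to control $\int\sup_{g}\|g-\widehat g^{(m)}\|$, and in general $\int\sup_g\ge\sup_g\int$, so the naive estimate is insufficient. To close this gap I would use the full force of Definition 5.10 --- that the approximation is available for \emph{every} $A\in\Sigma$, not merely for $A=\Gamma$ --- in an exhaustion (Egorov-type) argument: given $\delta>0$, one repeatedly applies the definition on the portion of $\Gamma$ where the current step function still deviates from $\mathbb J_G$ by more than $\delta$, peeling off a controlled amount of measure at each stage, so as to produce a simple function within $\delta$ of $\mathbb J_G$ outside a set of measure less than $\delta$. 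This is meant to upgrade the uniform-in-$g$ $L_1$ control into convergence of $s_m$ to $\mathbb J_G$ in measure; extracting a subsequence then gives $s_{m_i}(\gamma)\to\mathbb J_G(\gamma)$ in $C_b(\Omega)$ for almost every $\gamma$.

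Finally, the conclusion follows in either of two ways. Directly, $\mathbb J_G$ is the a.e. limit of the simple functions $s_{m_i}$, hence strongly measurable by Definition 4.2 ii). Alternatively one invokes Pettis's theorem (Lemma 4.4): the a.e. inclusion $\mathbb J_G(\gamma)\in\overline{C}$ shows $\mathbb J_G$ is essentially separably valued, while weak measurability follows from Lemma 5.1, which reduces the measurability of $\langle\varphi,\mathbb J_G(\cdot)\rangle$ for $\varphi\in C_b(\Omega)^*$ to that of the evaluations $\gamma\mapsto\sigma_{G(\gamma)}(x^*)$, each recovered as an a.e. limit of the measurable functions $s_m(\cdot)(x^*)$. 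In every route the decisive and most delicate step is the passage from the $\sup_g\!\int$ control furnished by uniform measurability to genuine a.e. convergence of the approximating simple functions.
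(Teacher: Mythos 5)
Your overall architecture coincides with the paper's: build $C_b(\Omega)$-valued simple functions from the partitions and reference points supplied by Definition 5.10, control $\|\mathbb J_G(\gamma)-s_m(\gamma)\|_{C_b(\Omega)}$ by $\sup_{g\in G}\|g(\gamma)-\widehat g^{(m)}(\gamma)\|$ via the support-function inequality, and conclude that $\mathbb J_G$ is an a.e.\ limit of simple functions (or invoke Pettis). The paper differs only in that, for each fixed $m$, it re-applies Definition 5.10 countably often on the \emph{measurable} leftovers $\Gamma\setminus\Gamma^1_m$, then $(\Gamma\setminus\Gamma^1_m)\setminus\Gamma^2_m$, and so on, so as to exhaust $\Gamma$ up to a null set rather than up to a set of measure $1/m$; your single application per $m$ together with $\sum_m 2^{-m}<\infty$ would serve the same purpose.

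The genuine gap is precisely the step you flag as decisive and then do not carry out. Definition 5.10 furnishes $\sup_{g\in G}\int\|g-\widehat g^{(m)}\|\,d\eta<\eps$, while your pointwise estimate needs control of $\sup_{g\in G}\|g(\gamma)-\widehat g^{(m)}(\gamma)\|$ for a.e.\ $\gamma$, and your proposed ``Egorov-type exhaustion'' does not bridge the two. First, the set on which you would re-apply the definition, $\{\gamma:\|\mathbb J_G(\gamma)-s_m(\gamma)\|_{C_b(\Omega)}>\delta\}$, is not known to belong to $\Sigma$ --- measurability of $\mathbb J_G$ is the conclusion of the theorem, so using it to select the next $A\in\Sigma$ is circular. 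Second, even granting measurability, there is no mechanism by which the measure of that set shrinks under iteration: each re-application returns only another $\sup_g\!\int$ bound on a large subset, and Chebyshev applied to each $g$ separately yields exceptional sets of measure $<\eps/\delta$ that depend on $g$, whose union over an uncountable $G$ can have full measure. So ``peeling off a controlled amount of measure at each stage'' is an assertion, not an argument. The paper does not attempt such an upgrade: in passing from its displays (5.11)--(5.12) to (5.13) it reads the estimate of Definition 5.10 as holding for every $\gamma$ in each block, uniformly in $g$ (its iteration over complements only improves ``up to measure $1/m$'' to ``up to a null set''), and all subsequent steps rest on that pointwise bound. To complete your proof you must either adopt that pointwise reading of uniform measurability (in which case the rest of your argument goes through and is essentially the paper's) or supply a genuinely new argument converting the $L_1$-uniform control into a.e.\ control of the pointwise supremum; as written, neither is done.
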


\begin{proof}
Without loss of generality, we can assume that (5.9) holds for all $\gamma\in\Gamma$.

 Given $m\in\mathbb N$, let $A^1=\Gamma$, and $\eps=1/m$. It follows from Definition 5. 10 that there exist $\Gamma^1_m\in\Sigma$ and mutually disjoint $A^1_{m,1}, \cdots A^1_{m,n_{m,1}}\in\Sigma$, $\gamma^1_j\in\Gamma$ for $j=1,2,\cdots,n_{m,1}$ with
$\cup_{j=1}^{n_{m,1}} A^1_{m,j}=\Gamma^1_m$ and with $\eta(\Gamma\setminus \Gamma^1_m)<1/m$ such that
\begin{equation}
\sup_{g\in G}\|g(\cdot)\chi_{\Gamma^1_{m}}-\sum_{j=1}^{n_{m,1}}g(\gamma^1_j)\chi_{A^1_{m,j}}\|<1/m.
\end{equation}

Next, we substitute $A^2\equiv\Gamma\setminus\Gamma^1_m$ for $A^1(=\Gamma)$ in the procedure above. Then there exist $\Gamma^2_m\in\Sigma$ and mutually disjoint $A^2_{m,1}, \cdots A^2_{m,n_{m,2}}\in\Sigma$, $\gamma^2_j\in\Gamma$ for $j=1,2,\cdots,n_{m,2}$ with
$\cup_{j=1}^{n_{m,2}} A^2_{m,j}=\Gamma^2_m$ and with $\eta(A^2\setminus\Gamma^2_m)<1/m$ such that
\begin{equation}
\sup_{g\in G}\|g(\cdot)\chi_{\Gamma^2_{m}}-\sum_{j=1}^{n_{m,2}}g(\gamma^2_j)\chi_{A^2_{m,j}}\|<1/m.
\end{equation}
We can claim that $\Gamma^2_m\subset A^2$; Otherwise, we can replace $\Gamma^2_m$ by $\Gamma^2_m\cap A^2$.

Inductively, we obtain a  sequence $\{\Gamma_m^n\}_{n=1}^\infty$ of mutually disjoint measurable sets with $\eta(\Gamma\setminus\cup_{n=1}^\infty \Gamma_m^n)=0$ and a family of strongly measurable functions $\{g_{f,m}: f\in M\}$:
$f_{g,m}: \Gamma_m\equiv\cup_{n=1}^\infty \Gamma_m^n\rightarrow X$ defined for $\gamma$ by
\begin{equation}
f_{g,m}(\gamma)=\sum_{j=1}^{n_{m,l}}g(\gamma^l_j)\chi_{A^l_{m,j}}, \;\gamma\in \Gamma_m^l, l\in\N,
\end{equation}
which satisfies
\begin{equation}
\sup_{g\in G, \gamma\in\Gamma_m}\|g(\gamma)\chi_{\Gamma_{m}}(\gamma)-f_{g,m}(\gamma)\|<1/m,
\end{equation}
and $\eta(\Gamma\setminus\Gamma_m)=0$. Clearly, for each $g\in G$ and $l\in\N$, $f_{g,m}$ is a simple function on $\Gamma_m^l$. Let $G_{m,l}=\{f_{g,m}|_{\Gamma^l_m}: g\in G\}.$
For each $l\in\N$, by (5.12) we know that
$$J_{m,l}(\gamma)(x^*)\equiv\sup_{g\in G}\Big\{\langle x^*,f_{g,m}|_{\Gamma^l_m}(\gamma)\rangle\Big\},\;x^*\in\Omega\; (=B_{X^*})$$
defines a simple function $J_{m,l}: \Gamma_m^l\rightarrow C_b(\Omega)$. 
Note that for $\gamma\in\Gamma^l_m$,
\[\|\mathbb J_G(\gamma)-J_{m,l}(\gamma)\|_{C_b(\Omega)}\equiv\sup_{x^*\in\Omega}(\mathbb J_G(\gamma)(x^*)-J_{m,l}(\gamma)(x^*)).\]
$\forall \eps>0$ and $\gamma\in\Gamma^l_m$,  let $x^*_\eps\in\Omega$ be such that
\begin{equation}\|\mathbb J_G(\gamma)-J_{m,l}(\gamma)\|_{C_b(\Omega)}<\mathbb J_G(\gamma)(x_\eps^*)-J_{m,l}(\gamma)(x_\eps^*)+\eps.\end{equation}
Then there is $g_\eps\in G$ be such that
$\mathbb J_G(\gamma)(x_\eps^*)<\langle x_\eps^*,g_\eps(\gamma)\rangle+\eps$. Therefore,
\[\mathbb J_G(\gamma)(x_\eps^*)-J_{m,l}(\gamma)(x_\eps^*)+\eps\;\;\;\;\;\;\;\;\;\;\;\;\;\;\;\]
\[<(\langle x_\eps^*,g_\eps(\gamma)\rangle+\eps)-\langle x_\eps^*,f_{g_\eps,m}(\gamma)\rangle\]
\[\leq\|g_\eps(\gamma)-f_{g_\eps,m}(\gamma)\|+\eps<\frac{1}m+\eps.\]
Since $\eps$ is arbitrary, the inequalities above  and (5.14) imply that for all $\gamma\in\Gamma_m^l$,
\begin{equation}\|\mathbb J_G(\gamma)-J_{m,l}(\gamma)\|_{C_b(\Omega)}\leq\frac{1}m.\end{equation}

Now, we define $J_m: \Gamma_m\rightarrow C_b(\Omega)$ by $J_m(\gamma)=J_{m,n}(\gamma), \;\gamma\in\Gamma_m^n, n=1,2,\cdots.$ Clearly, $J_m$ is strongly measurable and it satisfies
\begin{equation}\|\mathbb J_G(\gamma)-J_{m}(\gamma)\|_{C_b(\Omega)}\leq\frac{1}m,\;\gamma\in\Gamma_m,\end{equation}
and
$J_m(\Gamma_m)=\cup_{n=1}^\infty J_m(\Gamma_m^n)$ is a countable subset of $C_b(\Omega)$.

By taking $m=1,2,\cdots$, we can obtain the following two sequences:

a) a sequence $\{\Gamma_m\}$ of measurable sets satisfying $\eta(\Gamma\setminus\Gamma_m)=0$ for all $m\in\N$;

b) a sequence  $\{J_m\}$ of strongly measurable functions $J_m$ defined on $\Gamma_m$ with countable ranges satisfying
\begin{equation}\|\mathbb J_G(\gamma)-J_{m}(\gamma)\|_{C_b(\Omega)}\leq\frac{1}m,\;\gamma\in\Gamma_m, m\in\N.\end{equation}

Finally, let $\Gamma_0=\bigcap_{m=1}^\infty\Gamma_m$. Then $\eta(\Gamma\setminus\Gamma_0)=0$, and
$$\lim_{m\rightarrow\infty}\|\mathbb J_G(\gamma)-J_m(\gamma)\|_{C_b(\Omega)}=0,\;\forall\;\gamma\in\Gamma_0.$$
Consequently, $\mathbb J_G:\Gamma\rightarrow C_b(\Omega)$ is strongly measurable.
\end{proof}
The following result is a consequence of  Theorem 5.11 and Corollary 4.9.
\begin{corollary}
Let $X$ be a  Banach space, $\mu$ be a convex MNC on $X$, $I=[0,a]$, and $G\subseteq L_1(I,X)$ be a nonempty separable and uniformly measurable subset.   Then the following inequality holds
\begin{equation}
\mu\Big(\int_0^t G(s)ds\Big)\leq \frac{1}t\int_0^t \mu\Big(tG(s)\Big)ds,\;\;\forall 0<t\leq a.
\end{equation}
In particular,
\begin{equation}
\mu\Big(\int_0^t G(s)ds\Big)\leq\int_0^t \mu\Big(G(s)\Big)ds,\;\;\forall 0<t\leq a,
\end{equation}
if one of the following conditions is satisfied,

i) $0<t\leq\min\{1, a\};$

ii) $\mu$ is a sublinear MNC.
\end{corollary}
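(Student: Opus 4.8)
The plan is to read the two displayed inequalities of this corollary as a literal instance of Corollary 4.9, so that essentially nothing new has to be computed. Both asserted inequalities, namely $\mu\big(\int_0^tG(s)ds\big)\le\frac1t\int_0^t\mu\big(tG(s)\big)ds$ and its specialization $\mu\big(\int_0^tG(s)ds\big)\le\int_0^t\mu\big(G(s)\big)ds$ under hypothesis (i) or (ii), are exactly the conclusions of Corollary 4.9. Hence the whole task reduces to verifying, for the present $G$, the hypotheses of Corollary 4.9: that $G\subseteq L_1(I,X)$ is bounded (which is built into the definition of uniform measurability, Definition 5.10), that there is an integrable majorant of $\sup_{g\in G}\|g(\cdot)\|$, and that $\mathbb J_G\colon I\to C_b(\Omega)$ is strongly measurable.

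First I would secure the strong measurability of $\mathbb J_G$, which is precisely Theorem 5.11. Its two hypotheses must be checked: that $G$ is uniformly measurable, which is assumed, and that $\sup_{g\in G}\|g(\gamma)\|<\infty$ for almost every $\gamma$. For the latter I would use separability, fixing an $L_1$-dense sequence $\{g_n\}\subset G$; the sup-envelope $\psi(t):=\sup_{g\in G}\|g(t)\|$ is then measurable, and under the standing hypothesis of Part~II that this envelope is integrable it lies in $L_1(I,\R^+)$, so in particular it is finite almost everywhere. With both hypotheses in hand, Theorem 5.11 gives that $\mathbb J_G$ is strongly measurable. The same $\psi\in L_1(I,\R^+)$ serves as the dominating function required by Corollary 4.9, and the proof of Theorem 4.8 already records that $s\mapsto\mu(G(s))=\digamma(\mathbb T(G(s)))$ is measurable (as the composition of the strongly measurable map $\mathbb T(G(\cdot))$ with the continuous convex function $\digamma$), so the right-hand integrals are meaningful. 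Invoking Corollary 4.9 then yields both inequalities, the second under either hypothesis (i) or (ii).

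The main obstacle is not this final assembly but the interface between the two cited results. Theorem 5.11 needs only the pointwise a.e. finiteness of the envelope, whereas Corollary 4.9 needs a genuine $L_1$-majorant; uniform measurability forces $G$ to be $L_1$-bounded but, as elementary concentrating-spike examples show, does not by itself force $\psi=\sup_{g\in G}\|g(\cdot)\|$ to be integrable. Thus the one point requiring care is to guarantee $\psi\in L_1(I,\R^+)$, which is exactly where the standing integrability assumption on the sup-envelope (Part~II) must be used; the remaining step, identifying $\psi$ with $\sup_n\|g_n(\cdot)\|$ almost everywhere through the density of $\{g_n\}$ (the same reduction as in the proof of Theorem 5.2, taking a.e.-convergent subsequences), is routine.
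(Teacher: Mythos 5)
Your proposal follows exactly the paper's route: the paper derives this corollary in a single line as a consequence of Theorem 5.11 (strong measurability of $\mathbb J_G$ for uniformly measurable $G$) combined with Corollary 4.9, which is precisely your assembly. Your additional observation is well taken and worth recording: as literally stated the corollary omits the hypothesis $\sup_{g\in G}\|g(\cdot)\|\leq\psi\in L_1(I,\R^+)$ that Corollary 4.9 requires, and separability plus uniform measurability does not by itself supply it (your concentrating-spike example is correct), so one must either import the standing sup-envelope integrability assumption from Part II, as you do, or add it explicitly to the hypotheses.
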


\begin{remark} Corollary 5.12 is a generalization of Kunze and G. Schl\"{u}chtermann  \cite[Corollary 3.19]{kun}, where they showed that the inequality (5.19) holds for the Hausdorff MNC $\beta$, i.e. $\mu=\beta$.
\end{remark}

\section{On representation of measures of non-property $\mathfrak{D}$}
In this section, we will discuss a more general class of measures on Banach spaces, namely, convex MNP$\mathfrak{D}$ (defined in Definition 1.5), which contains convex MNCs, convex MNWCs, convex MNSWCs, convex MNRNPs and convex MNAs as its special cases. We will show the results presented in Sections 3-5 for MNCs are still true for these measures of non generalized compactness.

In this section, we again use $\mathscr D(X)$ (or, simply, $\mathscr D$) to denote a fundamental sub-semigroup of  $\mathscr C(X)$ (Definition 1.3).

\begin{proposition}
Every fundamental closed sub-semigroup $\mathscr{D}$ of  $\mathscr{C}(X)$ contains all nonempty compact convex subsets of $X$. i.e.  $\mathscr{D}(X)\supset\mathscr{K}(X)$.
\end{proposition}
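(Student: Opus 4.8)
The plan is to build all of $\mathscr{K}(X)$ up from singletons using the three defining properties of a fundamental sub-semigroup, and then to pass from polytopes to arbitrary compact convex sets via the closedness characterization of Lemma 2.4.

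First I would show that every singleton lies in $\mathscr{D}$. By property (1) of Definition 1.3 we have $\bigcup\{D: D\in\mathscr{D}\}=X$, so for each $x\in X$ there is $D\in\mathscr{D}$ with $x\in D$. Since $\emptyset\neq\{x\}\subset D$, property (3) gives $\overline{\mathrm{co}}(\{x\})=\{x\}\in\mathscr{D}$. Next I would prove by induction on $n$ that the convex hull (which is automatically compact, hence closed, as the continuous image of a simplex) of any finite set $\{x_1,\dots,x_n\}\subset X$ belongs to $\mathscr{D}$. The case $n=1$ is the singleton step. For the inductive step, assume $P\equiv\overline{\mathrm{co}}\{x_1,\dots,x_{n-1}\}\in\mathscr{D}$; since also $\{x_n\}\in\mathscr{D}$, property (2) yields $\overline{\mathrm{co}}(\pm P\cup\pm\{x_n\})\in\mathscr{D}$. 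The set $\{x_1,\dots,x_n\}$ is a nonempty subset of this symmetric hull, so property (3) gives $\overline{\mathrm{co}}\{x_1,\dots,x_n\}\in\mathscr{D}$.

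Finally I would approximate an arbitrary $K\in\mathscr{K}(X)$ by polytopes and invoke Lemma 2.4. Fixing $\varepsilon>0$, compactness of $K$ provides a finite $\varepsilon$-net $\{x_1,\dots,x_n\}\subset K$, and I set $P_\varepsilon=\overline{\mathrm{co}}\{x_1,\dots,x_n\}$. By the previous step $P_\varepsilon\in\mathscr{D}$; convexity of $K$ gives $P_\varepsilon\subset K$, and the net property gives $K\subset P_\varepsilon+\varepsilon B_X$. Since $K\in\mathscr{C}(X)$ and for every $\varepsilon>0$ there is $P_\varepsilon\in\mathscr{D}$ with $K\subset P_\varepsilon+\varepsilon B_X$, the closedness of $\mathscr{D}$ together with Lemma 2.4 yields $K\in\mathscr{D}$, completing the proof.

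The only genuinely delicate point is the second step: property (2) delivers only the \emph{symmetric} convex hull $\overline{\mathrm{co}}(\pm A\cup\pm B)$ rather than $\overline{\mathrm{co}}\{x_1,\dots,x_n\}$ directly, so one must be careful to reintroduce the correct polytope. This is exactly what the hereditary property (3) repairs, by allowing one to discard the spurious points $-x_i$. Once the polytopes are in hand, the whole statement reduces to the standard fact that compact convex sets are approximable by inscribed polytopes in the Hausdorff metric, and Lemma 2.4 does the rest.
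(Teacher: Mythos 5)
Your proposal is correct and follows essentially the same route as the paper: first show that $\mathscr{D}$ contains all polytopes $\overline{\mathrm{co}}(F)$ for finite $F$ (the paper asserts this directly from Definition 1.3, whereas you spell out the induction using properties (1)--(3), including the point that property (3) strips away the spurious $\pm$ points introduced by property (2)), and then approximate an arbitrary compact convex set by inscribed polytopes and invoke closedness of $\mathscr{D}$. The only cosmetic difference is that you pass to the limit via the one-sided criterion of Lemma 2.4 while the paper phrases the same step as density of polyhedra in $\mathscr{K}(X)$ plus completeness of $\mathscr{D}$; both are valid.
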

\begin{proof} Since $\mathscr{C}(X)$ is complete, and since $\mathscr{D}$ is closed in $\mathscr{C}(X)$, $\mathscr{D}$ is necessarily complete.
On the other hand, it follows from Definition 1.3 that every fundamental closed sub-semigroup $\mathscr{D}$ of  $\mathscr{C}(X)$ contains all nonempty convex polyhedrons of $X$, i.e. for any finite subset $F\subset X$,  ${\rm co}(F)\in\mathscr{D}.$ Note that the set $\mathscr{P}(X)$ of all nonempty convex polyhedrons of $X$ is dense in  $\mathscr{K}(X)$.  Completeness of $\mathscr{D}$ entails $\mathscr{D}(X)\supset\mathscr{K}(X)$.
\end{proof}

\begin{definition}
A convex measure of non-property $\mathfrak{D}$ on $X$ is said to be, successively, a convex measure of noncompactness, of non-weak compactness, of non-super weak compactness, of non-Radon-Nikod\'ym property, and of non-Asplundness if we take, successively,

$ \mathscr{D}=\mathscr{K}(X)$, the fundamental semigroup consisting of all nonempty convex compact subsets  of $X$;

$\mathscr{D}=\mathscr{W}(X),$ the fundamental semigroup consisting of all nonempty convex weakly compact subsets of $X$;

$\mathscr{D}=$sup-$\mathscr{W}(X)$, the fundamental semigroup consisting of all nonempty convex super weakly compact subsets of $X$;

$\mathscr{D}=\mathscr{R}(X)$, the fundamental semigroup consisting of all nonempty closed bounded convex  subsets of $X$ with the Radon-Nikod\'ym property; and

$\mathscr{D}=\mathscr{A}(X)$, the fundamental semigroup consisting of all nonempty closed bounded convex Asplund subsets of $X$.
\end{definition}
\begin{definition}
i) Kuratowski's measure $\alpha_{\mathscr{D}}$ of non-property $\mathfrak{D}$ on $X$ is defined  for $B\in \mathscr{B}(X)$ by
$$\alpha_{ \mathscr{D}}(B)=\inf\Big\{r>0:\;\exists D\in\mathscr{D},  \{E_j\}_{j\in F}\subset\mathscr B(X) \;{s. \;t.}\;B\subset D+\cup_{j\in F} E_j \Big\},$$
where $F\subset\N$ is a finite set with $ {\rm diam}(E_j)\leq r$, for all $ j\in F$.

ii) \cite{ccs} \;The Hausdorff measure $\beta_{\mathscr{D}}$ of non-property $\mathfrak{D}$ on $X$ is defined  for $B\in \mathscr{B}(X)$ by
$$\beta_{\mathscr{D}}(B)=\inf\Big\{r>0:\;\exists D\in\mathscr{D},  \;{s. \;t.}\;B\subset D+rB_X  \Big\}.$$
\end{definition}

\begin{theorem}
Let $X$ be a Banach space, and $\mu:\mathscr B(X)\rightarrow\mathbb R^+$ be a convex measure of non-property $\mathfrak{D}$. Then

i)  {\rm [{\bf Density determination}]} $\mu(\overline{B})=\mu(B),\;\forall B\in \mathscr B(X)$;

ii) {\rm[{\bf Translation invariance}]} $\mu(B+D)=\mu(B),\;\forall  B\in \mathscr B(X), D\in\mathscr D(X)$;

iii) {\rm[{\bf Negiligbility}]} $\mu(B\cup D)=\mu(B),\;\forall \; B\in \mathscr B(X), D\in\mathscr D(X)$.

\noindent
If, in addition, one of the following four conditions is satisfied:

a)  $\mathscr {D}=\mathscr K;$

b)  $\mathscr{D}=\mathscr W;$

c)  $\mathscr{D}=$sup-$\mathscr W;$

d) $X$ is reflexive,

\noindent
then

iv) {\rm[{\bf Continuity}]} $\emptyset\neq C_{n+1}\subset C_n\in\mathscr C(X),\;n\in\N;\; \mu(C_n)\rightarrow0\Longrightarrow$\[\bigcap_n{C_n}\neq\emptyset.\]
\end{theorem}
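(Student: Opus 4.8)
The plan is to follow the scheme of Theorem 3.4 (the case $\mathscr D=\mathscr K$), replacing compact sets by members of $\mathscr D$ wherever possible, and to isolate at the end the single point where the extra hypotheses (a)--(d) are genuinely needed. A preliminary observation I would record first is that a fundamental subsemigroup $\mathscr D$ is automatically a \emph{symmetric cone}: applying axiom (2) of Definition 1.3 with $A=B=D$ shows $S\equiv\overline{\rm co}(D\cup -D)\in\mathscr D$, and since $S$ is balanced we have $tD\subset tS\subset S$ for $0<t\le 1$ and $-D\subset S$, so axiom (3) gives $tD,\,-D\in\mathscr D$ for $0<t\le1$; for $t>1$ one writes $tD=n\cdot(\tfrac{t}{n}D)$ with $n=\lceil t\rceil$ and uses that the $n$-fold $\oplus$-addition of the convex set $\tfrac{t}{n}D$ equals $tD$. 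Together with Proposition 6.1 ($\mathscr K\subset\mathscr D$), this yields $\mu(tD)=0$ and $\mu(-D)=0$ for all $D\in\mathscr D$ and $t>0$.

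Part (i) needs no change: the proof of Theorem 3.4(i) uses only monotonicity and convexity of $\mu$ (and that $\mu$ is real-valued, so $\mu((M+1)B_X)<\infty$), not the non-property axiom, so the estimate $\overline A\subset(1-\tfrac{\eps}{M})A+\tfrac{\eps}{M}(M+1)B_X$ followed by $\eps\to0^+$ applies verbatim. For Part (ii) I would first note that $t\mapsto\mu(tB)$ is convex on $\R$, hence continuous, exactly as before. The upper bound then comes from convexity: $\mu(B+D)=\mu\big(\lambda\cdot\tfrac1\lambda D+(1-\lambda)\tfrac1{1-\lambda}B\big)\le\lambda\mu(\tfrac1\lambda D)+(1-\lambda)\mu(\tfrac1{1-\lambda}B)=(1-\lambda)\mu(\tfrac1{1-\lambda}B)\to\mu(B)$ as $\lambda\to0^+$, where $\mu(\tfrac1\lambda D)=0$ because $\tfrac1\lambda D\in\mathscr D$. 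The reverse inequality uses $-D\in\mathscr D$: from $B\subset(B+D)+(-D)$ and monotonicity, $\mu(B)\le\mu((B+D)+(-D))\le\mu(B+D)$, the last step being the upper bound just proved, applied to $B+D$ and $-D$.

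For Part (iii) I would first obtain negligibility of finite sets exactly as in Theorem 3.4(iii): reduce $\{x_0\}\cup B$ to $\{0\}\cup(B-x_0)$ by translating (a point is in $\mathscr K\subset\mathscr D$), bound it by $(B-x_0)+[0,\,x_0-b_0]$ with a compact segment, apply (ii), and then iterate. For a general $D\in\mathscr D$, fix $d_0\in D$ and set $D_0=D-d_0\in\mathscr D$, $B'=B-d_0$, so that $0\in D_0$. The key inclusion is $B'\cup D_0\subset(B'\cup\{0\})+D_0$, which holds because $0\in D_0$ forces $B'\subset B'+D_0$. Monotonicity, translation invariance (ii) with $D_0\in\mathscr D$, and singleton negligibility then give $\mu(B'\cup D_0)\le\mu\big((B'\cup\{0\})+D_0\big)=\mu(B'\cup\{0\})=\mu(B')$; combined with $\mu(B')\le\mu(B'\cup D_0)$ and the translation identities $\mu(B\cup D)=\mu(B'\cup D_0)$ and $\mu(B)=\mu(B')$, this yields $\mu(B\cup D)=\mu(B)$.

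Part (iv) is where the hypotheses (a)--(d) enter, and is the main obstacle. Choosing $x_n\in C_n$, the inclusions $\{x_n:n\ge k\}\subset C_k$ together with finite-set negligibility from (iii) give $\mu(\{x_n:n\ge1\})=\mu(\{x_n:n\ge k\})\le\mu(C_k)\to0$, so $\mu(\{x_n:n\ge1\})=0$ and hence, by the non-property $\mathfrak D$ axiom, $\overline{\rm co}(\{x_n:n\ge1\})\in\mathscr D$. The remaining task is to extract from $\{x_n\}$ a cluster point lying in $\bigcap_nC_n$; since each $C_n$ is closed and convex, hence weakly closed, a \emph{weak} cluster point of a tail $\{x_n:n\ge k\}$ automatically lies in every $C_k$. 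Thus it suffices that $\overline{\rm co}(\{x_n\})$ be weakly compact, and this is exactly what (a)--(d) supply: for $\mathscr D=\mathscr K$ it is even norm compact; for $\mathscr D=\mathscr W$ it is weakly compact by definition; for super weakly compact $\mathscr D$ one uses that super weak compactness implies weak compactness; and if $X$ is reflexive the bounded closed convex set $\overline{\rm co}(\{x_n\})$ (contained in the bounded set $C_1$) is weakly compact automatically. In each case Eberlein--\v{S}mulian yields a weakly convergent subsequence whose limit lies in $\bigcap_nC_n$. I expect the only real difficulty to be the recognition that for the genuinely non-reflexive classes $\mathscr R$ (RNP) and $\mathscr A$ (Asplund), membership in $\mathscr D$ does \emph{not} force weak compactness, which is precisely why (iv) is asserted only under (a)--(d).
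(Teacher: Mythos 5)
Your proposal is correct, and in one essential place it diverges from --- and in fact repairs --- the paper's own argument. The paper proves (i)--(iii) by asserting that the proof of Theorem 3.4 goes through verbatim with $\mathscr D$ in place of $\mathscr K$, and proves (iv) essentially as you do: for $\mathscr D=\mathscr W$ it forms $W_n=\overline{\rm co}\{x_k\}_{k\geq n}$, shows $\mu(W_1)=\mu(W_n)\leq\mu(C_n)\rightarrow 0$, concludes that $W_1$ is weakly compact, and invokes the finite intersection property of the decreasing family $\{W_n\}$; your Eberlein--\v{S}mulian extraction of a weakly convergent subsequence is the same argument in sequential form, and the cases $\mathscr K$, sup-$\mathscr W$ and reflexive $X$ are treated identically. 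For (i) and (ii) the verbatim substitution does work, but only granted your preliminary observation that $tD\in\mathscr D$ for $t>0$ and $-D\in\mathscr D$ --- a fact the paper uses silently and which you are right to derive from axioms (2)--(3) of Definition 1.3. For (iii), however, the Theorem 3.4(iii) argument passes from finite sets to a general compact $C$ by covering it with a finite $\eps$-net, $C\subset F_\eps+\eps B_X$; this step has no analogue for a general $D\in\mathscr D$ (a weakly compact or RNP set need not be totally bounded), so ``the same proof'' does not literally apply. Your inclusion $B'\cup D_0\subset(B'\cup\{0\})+D_0$ for $0\in D_0=D-d_0$, combined with translation invariance and singleton negligibility, replaces that step by an argument valid for every $D\in\mathscr D$, and this is the right way to close the gap (note only that $D-d_0\in\mathscr D$ because $\{-d_0\}\in\mathscr K\subset\mathscr D$ and $\mathscr D$ is closed under $\oplus$). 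Your final remark on why (iv) is restricted to (a)--(d) --- membership in $\mathscr R$ or $\mathscr A$ does not force weak compactness --- correctly identifies the point of the hypothesis.
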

\begin{proof}
 By Proposition 6.1, $\mathscr{D}\supset \mathscr{K}$. Therefore, i), ii) and iii) follow from an argument which is the same as the proof of Theorem 3.4 but substituting ``convex MNP$\mathfrak{D}$" for ``convex MNC". It remains to show iv).

 If a) $\mathscr{D}=\mathscr K,$ then $\mu$ is a convex MNC. Thus, iv) follows from Theorem 3.4.

 If b) $\mathscr{D}=\mathscr W,$ then $\mu$ is a convex MNWC. For each $n\in\N$, choose any $x_n\in C_n$ and let $W_n=\overline{\rm co}\{x_k\}_{k\geq n}$. Then
 \[\mu(W_1)=\mu(W_n)\leq\mu(C_n)\rightarrow0,\;\;{\rm as\;\;}n\rightarrow\infty.\]
Thus, $\mu(W_1)=0$, which entails that $W_1$ is weakly compact.  Weak compactness and non-inceasing monotonicity of $\{W_n\}$ imply
\[\emptyset\neq\bigcap_n{W_n}\subset\bigcap_n{C_n}.\]

 If c) $\mathscr{D}=$sup-$\mathscr W,$ then by b), it suffices to note that every super weakly compact set is weakly compact.

 If d) $X$ is reflexive, then every $C_n$ is nonempty weakly compact. Consequently, $\bigcap_n{C_n}\neq\emptyset$.
\end{proof}
The following representation theorem of convex MNP$\mathfrak{D}$ is an analogy of Theorem 3.11.

\begin{theorem} Suppose that $X$ is a Banach space and that $\mathscr{D}$ is a fundamental sub-semigroup of $\mathscr{C}(X)$. Then there is a Banach space $C(K)$ for some compact Hausdorff space $K$ such that for every $\mu$ which is a convex measure of non property $\mathfrak{D}$ with respect to  $\mathscr{D}$ on $X$, there is a function $\digamma=\digamma_{\mathscr{D}}$ on the cone $V\equiv\mathbb T(\mathscr{B}(X))\subset C(K)^+$ satisfying

i) $\mu(B)=\digamma(\mathbb TB)$, for all $B\in\mathscr{B}(X)$;

ii)  $\digamma$ is nonnegative  convex and monotone on $V$;

iii) $\digamma$ is bounded by $b_r=\digamma(r\mathbb TB_X)$ on $V\bigcap(rB_{C(K)})$, for all $r\geq0$;

iv) $\digamma$ is $c_r$-Lipschitian on $V\bigcap(rB_{C(K)})$, for all $r\geq0$, where $c_r=\digamma\big((1+r)\mathbb TB_X\big)=\mu\big((1+r)B_X\big)$;

v) In particular, if $\mu$ is a sublinear of non property $\mathfrak{D}$, then we can  take $c_r=\mu(B_X)$ in iv).
\end{theorem}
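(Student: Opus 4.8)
The plan is to transcribe the entire development of Section~3 word for word, replacing the compact-convex sub-semigroup $\mathscr{K}(X)$ by the fundamental sub-semigroup $\mathscr{D}(X)$ throughout, and replacing the noncompactness input by the non-property $\mathfrak{D}$ input. The first step is to fix the $\mathscr{D}$-version of the three-time order-preserving mapping. Setting $E_{\mathscr{D}}=\overline{J\mathscr{D}-J\mathscr{D}}$, I would invoke the $\mathscr{D}$-analogs of Theorems~2.4--2.6 (from \cite{ccs}) to obtain that $E_{\mathscr{D}}$ is a lattice ideal of the Banach lattice $E_{\mathscr{C}}$, that the quotient $E_{\mathscr{C}}/E_{\mathscr{D}}$ is an abstract $M$ space carried by an order isometry $T_{\mathscr{D}}$ onto a sublattice of some $C(K)$, and hence that $\mathbb{T}\equiv T_{\mathscr{D}}Q_{\mathscr{D}}J:\mathscr{C}(X)\rightarrow C(K)^+$ is an order-preserving affine $1$-Lipschitz map. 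Since $J(D)\in J\mathscr{D}\subset E_{\mathscr{D}}$ for every $D\in\mathscr{D}$, we get $\mathbb{T}(D)=0$, and more importantly the kernel identity $\mathbb{T}\overline{\rm co}(B)=0\Longleftrightarrow\overline{\rm co}(B)\in\mathscr{D}$, which is the $\mathscr{D}$-counterpart of the last line of Theorem~2.7 and is precisely the translation of the non-property $\mathfrak{D}$ axiom into the language of $C(K)$. Proposition~6.1 guarantees $\mathscr{K}\subset\mathscr{D}$, so $\mathbb{T}$ still annihilates all compact convex sets, which is what the translation and negligibility arguments require.

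With this mapping in hand I would re-prove the order lemmas. Lemma~3.1 becomes: for $f,g\in V=\mathbb{T}\mathscr{C}(X)$ one has $f\leq g$ if and only if there are $A,B\in\mathscr{C}(X)$ with $f=\mathbb{T}A$, $g=\mathbb{T}B$ such that for every $\eps>0$ there is $D_\eps\in\mathscr{D}$ with $A\subset B+D_\eps+\eps B_X$; the proof is identical, the only change being that elements of $E_{\mathscr{D}}$ are now approximated by differences $J(D_1)-J(D_2)$ with $D_i\in\mathscr{D}$ rather than by compact convex sets. Lemma~3.3, namely $|f-g|\leq r\,\mathbb{T}(B_X)$ when $\|f-g\|=r$, carries over verbatim, since it rests only on $J(B_X)=\|\cdot\|_{X^*}$ and on the quotient estimate, neither of which sees which semigroup has been quotiented out. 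Corollary~3.2 follows as before.

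Next I would reconstruct Lemmas~3.5--3.10 for $\digamma=\digamma_{\mathscr{D}}$ defined by $\digamma(\mathbb{T}B)=\mu(B)$. Well-definedness uses the $\mathscr{D}$-version of Corollary~3.2 together with convexification invariance; convexity is immediate from Property~(8) of Definition~1.5; monotonicity uses the $\mathscr{D}$-version of Lemma~3.1 together with Theorem~6.4~ii) (translation invariance $\mu(B+D)=\mu(B)$ for $D\in\mathscr{D}$, which here replaces Theorem~3.4~ii)); and continuity follows from the $\mathscr{D}$-version of Lemma~3.3 exactly as in Lemma~3.6. The Lipschitz estimates (Lemmas~3.7--3.10) are pure convex analysis on the cone $V$: the normalization $\|x^*\|_{E_{V_0}}=\langle x^*,f_{B_X}\rangle$ for positive functionals, positivity of Gâteaux derivatives, the mean-value inequality for subdifferentials, and the finite-dimensional reduction all use only the cone structure of $V$, Lemma~3.3, and the monotonicity/convexity of $\digamma$, all of which are now available. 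Note that Theorem~6.4~iv) (continuity, which requires the extra hypotheses a)--d)) is never invoked, so the representation holds for every fundamental $\mathscr{D}$ with no restriction such as reflexivity. Assembling these as in the proof of Theorem~3.11 yields i)--v), with v) using that a sublinear MNP$\mathfrak{D}$ makes $\digamma$ positively homogeneous, so $c_r=\digamma(f_{B_X})=\mu(B_X)$.

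The hard part lies entirely in the first step. Whereas Section~2 states the lattice-ideal property and the $M$-space structure of the quotient only for $\mathscr{K}$ (Theorems~2.5--2.6), here one must secure them for an arbitrary fundamental $\mathscr{D}$, and in particular the kernel identity $\mathbb{T}\overline{\rm co}(B)=0\Longleftrightarrow\overline{\rm co}(B)\in\mathscr{D}$. These facts must be extracted from the fundamentality axioms (Definition~1.3) and the closedness criterion (Lemma~2.2), or cited from \cite{ccs}; once they are in place, the remainder is a mechanical transcription of Section~3 with $\mathscr{K}$ replaced by $\mathscr{D}$.
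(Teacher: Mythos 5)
Your proposal reaches the right conclusion but by a genuinely different, and considerably heavier, route than the paper. You rebuild the whole three-time mapping over $\mathscr{D}$: you form $E_{\mathscr{D}}=\overline{J\mathscr{D}-J\mathscr{D}}$, quotient $E_{\mathscr{C}}$ by it, and produce a new $C(K)$ and a new $\mathbb{T}_{\mathscr{D}}$ whose kernel is exactly $\mathscr{D}$. The paper does none of this. It keeps the \emph{original} $\mathbb{T}=TQJ$ (quotient by $E_{\mathscr{K}}$), the original $C(K)$, and the original cone $V=\mathbb{T}(\mathscr{B}(X))$, and the entire proof reduces to one observation: by Proposition 6.1 every fundamental $\mathscr{D}$ contains $\mathscr{K}$, so a convex MNP$\mathfrak{D}$ is (by Theorem 6.4 i)--iii)) invariant under perturbation by compact convex sets — and that is the only property of $\mu$, beyond (P2)--(P4), that the Section 3 machinery ever uses. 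In particular $\digamma_{\mathscr{D}}(\mathbb{T}B):=\mu(B)$ is well defined on the old $V$, because $\mathbb{T}A=\mathbb{T}B$ forces mutual inclusion up to compact sets and $\eps B_X$ (Corollary 3.2), and $\mu$ cannot distinguish such $A$ and $B$. Lemmas 3.1, 3.3 and 3.5--3.10 then apply verbatim. Note that it is harmless that $\mathbb{T}$ no longer annihilates exactly the $\mu$-null sets: the representation only needs $\digamma$ to be well defined, convex, monotone and Lipschitz on bounded sets, not to have matching kernels.

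The liability in your version is precisely the step you flag as ``the hard part'': the lattice-ideal property of $E_{\mathscr{D}}$ in $E_{\mathscr{C}}$, the abstract $M$-space structure of $E_{\mathscr{C}}/E_{\mathscr{D}}$, and the kernel identity $\mathbb{T}_{\mathscr{D}}\overline{\rm co}(B)=0\Longleftrightarrow\overline{\rm co}(B)\in\mathscr{D}$. The paper states these only for $\mathscr{K}$ (Theorems 2.6--2.7), you do not prove them for general $\mathscr{D}$, and as written your argument therefore rests on an uncited generalization. Since the paper's route makes that entire step unnecessary, you should either supply those $\mathscr{D}$-analogues in full or, better, switch to the lighter argument: fix the $\mathscr{K}$-based $\mathbb{T}$, invoke Proposition 6.1 and Theorem 6.4, and transcribe Section 3. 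A further practical advantage of keeping the $\mathscr{K}$-quotient is that the integral inequality of Lemma 4.7 and Corollary 4.9, which Theorem 6.6 reuses, is already proved for that $\mathbb{T}$; with your $\mathbb{T}_{\mathscr{D}}$ it would have to be re-derived. Your treatment of the remaining steps (well-definedness, convexity, monotonicity, the Lipschitz estimates via positive G\^ateaux derivatives, and part v)) is otherwise sound and matches the paper's use of the Section 3 lemmas.
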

\begin{proof}
Let $\mu$  be a convex measure of non property $\mathfrak{D}$ with respect to  $\mathscr{D}$ on $X$. Let
\[\digamma_{\mathscr D}(\mathbb T(B))=\mu(B),\;\;\forall\;B\in\mathscr B(X).\]
By Proposition 6.1, we know that all the propositions, lemmas and theorems in Section 3 hold again if we substitute $\digamma_{\mathscr D}$ for the convex function $\digamma$. In particular, Theorem 3.11 is still true for $\digamma_{\mathscr D}$.
\end{proof}
The following the theorem is an analogy of Theorem 4.8.
\begin{theorem}
Let $\mu$ be a convex measure of non property $\mathfrak{D}$ on  a Banach space $X$,  $I=[0,a]$, $G\subseteq L_1(I,X)$ be a nonempty bounded subset, and  $\psi\in L_1(I,\mathbb{R}^+)$ such that $\sup_{g\in G}\|g(t)\|\leq \psi(t)\;\;a.e.\;t\in I$. Assume that $\mathbb J_G: I\rightarrow C_b(\Omega)$ is strongly measurable. Then $\mu\Big(G(t)\Big)$ is measurable and
\begin{equation}
\mu\Big(\int_0^t G(s)ds\Big)\leq \frac{1}t\int_0^t \mu\Big(tG(s)\Big)ds,\;\;\forall 0<t\leq a.
\end{equation}
\end{theorem}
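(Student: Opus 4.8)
The plan is to repeat the proof of Theorem 4.8 almost verbatim, the single substantive change being that the representation function is now supplied by Theorem 6.4 in place of Theorem 3.11. The key observation that makes this transcription legitimate is that Lemma 4.7 is entirely independent of $\mu$: it asserts only that the three-time embedding $\mathbb T = TQJ$ carries $t \mapsto G(t)$ to an integrable $C(K)$-valued function and respects the integral, in the sense that $0 \leq TQJ\big(\int_0^t G(s)ds\big) \leq \int_0^t TQJ(G(s))ds$ holds in the order of $C(K)^+$. Since the hypotheses on $G$ and $\psi$ here are identical to those of Theorem 4.8, Lemma 4.7 applies without modification.

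First I would invoke Theorem 6.4 to produce a nonnegative, continuous, monotone convex function $\digamma = \digamma_\mathscr{D}$ on the cone $V \equiv \mathbb T(\mathscr B(X)) \subset C(K)^+$, Lipschitzian on every bounded subset of $V$, and satisfying $\digamma(\mathbb T(B)) = \mu(B)$ for all $B \in \mathscr B(X)$. Because $\mathbb T(G(\cdot)) = TQ\mathbb J_G(\cdot)$ is strongly measurable (indeed integrable, by Lemma 4.7) and $\digamma$ is continuous, the composition $\mu(G(\cdot)) = \digamma(\mathbb T(G(\cdot)))$ is measurable on $I$; applying the same reasoning to $\mathbb J_{tG} = t\,\mathbb J_G$, which is strongly measurable for each fixed $0 < t \leq a$, shows that $s \mapsto \mu(tG(s)) = \digamma(t\,\mathbb T(G(s)))$ is measurable as well.

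Next, fixing $0 < t \leq a$, I would dispose of the trivial case $\int_0^t \mu(tG(s))ds = \infty$ and, otherwise, rewrite the right-hand side by the change of variable $s \mapsto ts$ as $\frac{1}{t}\int_0^t \mu(tG(s))ds = \int_0^1 \mu(tG(ts))ds$. Since $\int_0^1 tG(ts)ds = \int_0^t G(s)ds$ is uniformly bounded by $\int_0^t \psi(s)ds$, Jensen's inequality (Lemma 4.6) applies to the continuous convex function $\digamma$. Chaining, in order, the monotonicity of $\digamma$, the integral inequality of Lemma 4.7, and Jensen's inequality yields $\mu\big(\int_0^t G(s)ds\big) = \digamma\big(TQJ(\int_0^t G(s)ds)\big) \leq \digamma\big(\int_0^1 TQJ(tG(ts))ds\big) \leq \int_0^1 \digamma\big(TQJ(tG(ts))\big)ds = \int_0^1 \mu(tG(ts))ds$, which equals $\frac{1}{t}\int_0^t \mu(tG(s))ds$; this is the asserted inequality.

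I expect no genuine obstacle here, since Theorem 6.4 already carries the full weight. The only point deserving attention is conceptual rather than computational: the function $\digamma_\mathscr{D}$ is defined on the cone $V$ built from the compact convex sets $\mathscr K$, not from $\mathscr D$ itself. Its well-definedness and the monotonicity used above rest on the inclusion $\mathscr D \supset \mathscr K$ (Proposition 6.1) together with the compact-set translation and negligibility invariances of Theorem 6.3, all of which are packaged inside Theorem 6.4. Once these are granted, the argument is a faithful repetition of the proof of Theorem 4.8.
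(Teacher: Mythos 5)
Your proposal is correct and is precisely the argument the paper intends: the paper states this result without proof, merely as ``an analogy of Theorem 4.8,'' and your transcription of that proof --- using the MNP$\mathfrak{D}$ representation theorem in place of Theorem 3.11, noting that Lemma 4.7 is $\mu$-independent, and observing that well-definedness and monotonicity of $\digamma_{\mathscr D}$ on $V$ rest on $\mathscr D\supset\mathscr K$ (Proposition 6.1) --- is exactly what is required. The only slip is in the numbering: the representation theorem for convex MNP$\mathfrak{D}$ is Theorem 6.5 (not 6.4), and the translation/negligibility invariances are Theorem 6.4 (not 6.3).
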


\begin{corollary}
Let $X$ be a  Banach space, $\mu$ be a convex measure of non property $\mathfrak{D}$ on  $X$,  $I=[0,a]$, $G\subseteq L_1(I,X)$ be a nonempty bounded subset, and  $\psi\in L_1(I,\mathbb{R}^+)$ such that $\sup_{g\in G}\|g(t)\|\leq \psi(t)\;\;a.e.\;t\in I$. Assume that $\mathbb J_G: I\rightarrow C_b(\Omega)$ is strongly measurable. Then the following inequality holds
\begin{equation}
\mu\Big(\int_0^t G(s)ds\Big)\leq \int_0^t \mu\Big(G(s)\Big)ds,\;\;\forall 0<t\leq a.
\end{equation}
if one of the following conditions is satisfied.

i) $0<t\leq\min\{1, a\};$

ii) $\mu$ is a sublinear measure of non property $\mathfrak{D}$.
\end{corollary}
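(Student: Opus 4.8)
The plan is to obtain the sharper estimate directly from the ``master'' inequality established in Theorem 6.5,
\[\mu\Big(\int_0^t G(s)ds\Big)\leq \frac{1}t\int_0^t \mu\Big(tG(s)\Big)ds,\;\;\forall 0<t\leq a,\]
by bounding the integrand $\mu(tG(s))$ above by $t\,\mu(G(s))$ under each of the two hypotheses. The standing assumptions on $X$, $G$, $\psi$, and the strong measurability of $\mathbb J_G$ coincide with those of Theorem 6.5, so that theorem applies directly and also supplies the measurability of $s\mapsto\mu(tG(s))$ needed to make the right-hand side meaningful.

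First I would record the two structural facts I need about the representing function. By the representation Theorem 6.4 there is a nonnegative, monotone, convex function $\digamma$ on the cone $V=\mathbb T(\mathscr B(X))$ with $\mu(B)=\digamma(\mathbb T B)$ for all $B\in\mathscr B(X)$. Since $\{0\}$ is compact convex, Proposition 6.1 gives $\{0\}\in\mathscr K\subset\mathscr D$, whence $\mu(\{0\})=0$ and therefore $\digamma(0)=\digamma(\mathbb T\{0\})=0$. Moreover $\mathbb T=TQJ$ is affine, hence positively homogeneous, so that $\mathbb T(tG(s))=t\,\mathbb T(G(s))$ for every $t\geq0$.

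For case i), where $0<t\leq\min\{1,a\}$ forces $t\leq1$, I would use convexity of $\digamma$ anchored at $\digamma(0)=0$: for each fixed $s$,
\[\mu(tG(s))=\digamma\big(t\,\mathbb T(G(s))+(1-t)\,0\big)\leq t\,\digamma\big(\mathbb T(G(s))\big)=t\,\mu(G(s)).\]
Substituting into the master inequality and cancelling the factor $1/t$ yields the claim. For case ii), positive homogeneity of the sublinear measure $\mu$ gives the exact identity $\mu(tG(s))=t\,\mu(G(s))$ for all $t\geq0$, and the same substitution closes the argument.

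There is no serious obstacle once Theorems 6.4 and 6.5 are in hand; the entire analytic content sits in Theorem 6.5. The only point demanding a little care is the inequality $\mu(tG(s))\leq t\,\mu(G(s))$ for $t\leq1$: this is not positive homogeneity (which is unavailable for a merely convex measure) but follows from convexity of $\digamma$ together with the normalization $\digamma(0)=0$, and it is precisely here that Proposition 6.1, guaranteeing $\{0\}\in\mathscr D$, is invoked.
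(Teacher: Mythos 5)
Your proof is correct and follows essentially the same route as the paper: the paper leaves Corollary 6.7 without an explicit proof, but its proof of the MNC analogue (Corollary 4.9) is precisely your argument — apply the master inequality and bound $\mu(tG(s))\leq t\,\mu(G(s))$ for $t\leq 1$ via convexity of $\digamma$, or use positive homogeneity in the sublinear case. Your explicit appeal to Proposition 6.1 to secure $\{0\}\in\mathscr D$ and hence $\digamma(0)=0$ is a worthwhile clarification that the paper leaves implicit.
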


The next result follows from  Corollary 6.7, Theorems 5.3, 5.7 and 5.11.
\begin{corollary}
Let $X$ be a  Banach space, and  $\mu$ be a convex measure of non property $\mathfrak{D}$ on $X$, $I=[0,a]$. If $G$ satisfies one of the following conditions

i) $G\subseteq C(I,X)$ is a nonempty equi-continuous subset;

ii) $G\subseteq R(I,X)$ is a nonempty separable equi-regulated subset;

iii) $G\subseteq L_1(I,X)$ is a nonempty separable uniformly measurable set,

\noindent
then the following inequality holds
\begin{equation}
\mu\Big(\int_0^t G(s)ds\Big)\leq \frac{1}t\int_0^t \mu\Big(tG(s)\Big)ds,\;\;\forall 0<t\leq a.
\end{equation}
In particular,
\begin{equation}
\mu\Big(\int_0^t G(s)ds\Big)\leq\int_0^t \mu\Big(G(s)\Big)ds,\;\;\forall 0<t\leq a.
\end{equation}
if one of the following conditions is satisfied.

a) $0<t\leq\min\{1, a\};$

b) $\mu$ is a sublinear measure of non property $\mathfrak{D}$.
\end{corollary}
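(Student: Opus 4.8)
The plan is to treat Corollary 6.8 as a packaging of the integral inequalities already proved for convex measures of non-property $\mathfrak{D}$: in each of the three cases the only substantive hypothesis of Theorem 6.6 and Corollary 6.7, namely strong measurability of the envelope map $\mathbb J_G\colon I\to C_b(\Omega)$, is exactly what has been established, so nothing about $\mu$ or about the representing function $\digamma$ has to be reworked. Concretely, I would record that case i) is covered by Theorem 5.3 (there $\mathbb J_G$ is even continuous), case ii) by Theorem 5.7, and case iii) by Theorem 5.11 (applied with $\Gamma=I$ and $\eta$ the Lebesgue measure). Once $\mathbb J_G$ is known to be strongly measurable and the remaining integrability data of Theorem 6.6 are in place, that theorem gives the first inequality (6.3) directly, and Corollary 6.7 gives the second inequality (6.4) under its two conditions $0<t\le\min\{1,a\}$ and $\mu$ sublinear. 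Thus the whole content of the proof is the verification of the hypotheses of Theorem 6.6 and Corollary 6.7 in each case.

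Those remaining hypotheses are that $G$ be a bounded subset of $L_1(I,X)$ and that there be $\psi\in L_1(I,\mathbb R^+)$ with $\sup_{g\in G}\|g(t)\|\le\psi(t)$ a.e. In case i) the continuity of $\mathbb J_G$ supplied by Theorem 5.3, together with the identity $\sup_{g\in G}\|g(t)\|=\|\mathbb J_G(t)\|_{C_b(\Omega)}$, shows that the envelope $t\mapsto\sup_{g\in G}\|g(t)\|$ is continuous on the compact interval $I$, hence bounded, so a constant majorant $\psi$ works and $G\subset C(I,X)\subset L_1(I,X)$ is bounded. In case ii) the same constant majorant is available because, as recorded in the proof of Theorem 5.7, a separable equi-regulated family is bounded in $R(I,\mathbb R)$, i.e. $\sup_{g\in G,\,t\in I}\|g(t)\|=c<\infty$. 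In case iii) the $L_1$-boundedness of $G$ is built into the definition of uniform $\eta$-measurability (Definition 5.10), so only the integrable majorant remains to be produced.

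I expect the genuinely delicate point — and hence the main obstacle — to be precisely this last item in case iii): securing an integrable majorant of the sup-envelope, since $L_1$-boundedness of $G$ controls each $\int_I\|g\|\,dm$ but not the supremum taken inside the integral. My intended route is to read it off from the approximation machinery in the proof of Theorem 5.11: the countably valued simple functions $J_m$ constructed there satisfy $\|\mathbb J_G(\gamma)-J_m(\gamma)\|_{C_b(\Omega)}\le 1/m$ a.e. (cf. (5.16)), so with $m=1$ one gets $\sup_{g\in G}\|g(\gamma)\|=\|\mathbb J_G(\gamma)\|_{C_b(\Omega)}\le\|J_1(\gamma)\|_{C_b(\Omega)}+1$, and one is reduced to estimating the countably valued function $\|J_1(\cdot)\|_{C_b(\Omega)}$ block by block on the partition underlying $J_1$, using the standing hypothesis (5.9) and the uniform approximation bound; checking that these block contributions sum to a finite integral is the crux, and is where I would concentrate the effort. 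Modulo this verification (and noting that, as in the proof of Theorem 4.8, the inequality is immediate whenever the right-hand integral diverges), Theorem 6.6 and Corollary 6.7 then apply verbatim and deliver (6.3) and (6.4), completing the argument.
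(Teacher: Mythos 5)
Your proposal takes exactly the paper's route: Corollary 6.8 is derived there in a single line from Corollary 6.7 combined with Theorems 5.3, 5.7 and 5.11, which is precisely your plan, and your verification of the boundedness and constant-majorant hypotheses in cases i) and ii) is correct. The only place you go beyond the paper is the integrable majorant in case iii); the paper silently omits this, and you should note that your sketched route via $J_1$ does not obviously close, since the uniform-measurability estimate (5.7)/(5.11) controls $\sup_{g\in G}\sum_j\|g(\gamma_j)\|\,\eta(A_j)$ but not $\sum_j\sup_{g\in G}\|g(\gamma_j)\|\,\eta(A_j)$, so in case iii) the existence of $\psi$ really has to be imposed (or read into the hypotheses) exactly as it is in Theorem 6.6 and Corollary 5.12.
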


\section{Some applications to a Cauchy problem}

In this section, as applications of the representation theorem of convex MNC (Theorem 3.11) and the inequalities in Section 4,  we will give two examples of solvability  the Cauchy problem (1.5) in the first section, i.e.
\begin{equation} \left\{\begin{array}{cc}
                    x'(t)=f(t, x), & a\geq t>0; \\
                    x(0)=x_0~~~~~~~~
                  \end{array}
\right.
\end{equation}
in a Banach space $X$. This results can be regarded as an extension of the classical result of K. Goebel and W. Rzymowski \cite{goe} (see, also,  \cite{rz}), J. Bana\'{s} and K. Goebel \cite{bag}. Our proof is based on their nice and concise constructions but with slight modifications.

The first example (Theorem 7.3) is to show that (7.1) has at least solution on the interval $I=[0,a]$  under the assumptions (1) $f: I\times B(x_0,r)\rightarrow X$ is uniformly continuous and bounded, and (2) for every nonempty bounded set $B\in \mathscr{B}(X)$, we have $\mu(f(t,B))\leq w(t,\mu(B))$, where $w(t,u)$ is a Kamke function and $\mu$ is a convex MNC. The same result has been  proven  by J. Bana\'{s} and K. Goebel \cite{bag} but under the assumption that $\mu$ is a ``symmetric" sublinear MNC.

Let $w(t,u)$  be a real nonnegative function defined on $(0,a]\times [0,+\infty)$ which is continuous with respect to $u$ for any fixed $t$ and measurable with respect to $t$ for each fixed $u$.  We call $w(t,u)$ a Kamke comparison function if $w(t,0)=0$ and the constant function $u=0$ is the unique solution of the integral inequality $$u(t)\leq\int_0^t w(s,u(s))ds,\;{\rm for}\;t\in (0,a]$$
which satisfies the condition $\lim\limits_{t\rightarrow 0^+}\frac{u(t)}{t}=\lim\limits_{t\rightarrow 0^+}u(t)=0$.

For $x\in C(I,X)$ and $B\subset C(I,X)$, where $I=[0,a]$, $X$ is a Banach space, the modulus of continuity of $x$ (resp., $B$) is defined by
\begin{equation}\omega(x,\eps)=\sup\{\;\|x(t)-x(s)\|:t,s\in [0,a],|t-s|\leq\eps\}.\end{equation}
\begin{equation}{\rm (resp.,}\;\omega(B,\eps)=\sup\{\omega(x,\eps):x\in B\}.{\rm )}\end{equation}
The following result is an extension of J. Bana\'{s} and K. Goebel \cite[Lemma 13.2.1]{bag}.
\begin{lemma}
Let $X$ be a Banach space, $I=[0,a]$, $\mu$ be a convex MNC on $X$ and $B$ be a nonempty bounded subset of $C(I,X)$. Then
\begin{equation}
|\;\mu(B(t))-\mu(B(s))\;|\leq L_B\cdot\omega(B,|t-s|),\;\;\forall t,s\in I,
\end{equation}
where $L_B>0$ is a constant. If   $B$ is equicontinuous, then $\mu(B(t))$ is continuous with respect to $t$.
\end{lemma}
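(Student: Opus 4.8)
The plan is to push the problem onto the cone $V\subset C(K)^+$ by means of the representation theorem (Theorem 3.11) and to exploit two Lipschitz facts: that the associated convex function $\digamma$ is Lipschitz on bounded subsets of $V$, and that $\mathbb T$ is $1$-Lipschitz from $(\mathscr B(X),d_H)$ into $C(K)^+$ (Remark 2.7). Writing $\mu(B(t))=\digamma(\mathbb T(B(t)))$ for each $t\in I$, the quantity $|\mu(B(t))-\mu(B(s))|$ becomes $|\digamma(\mathbb T(B(t)))-\digamma(\mathbb T(B(s)))|$, which we bound by a Lipschitz constant times $\|\mathbb T(B(t))-\mathbb T(B(s))\|$, and then by $d_H(B(t),B(s))$. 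Note that evaluating $\mathbb T=TQJ$ on the generally non-convex slices $B(t)=\{x(t):x\in B\}$ is legitimate, since $J(B(t))=\sigma_{B(t)}=\sigma_{\overline{\rm co}B(t)}$ depends on $B(t)$ only through its support function, so $\mathbb T(B(t))\in V$.

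First I would record the elementary geometric estimate $d_H(B(t),B(s))\le\omega(B,|t-s|)$. Indeed, for each $x\in B$ the point $x(t)\in B(t)$ has the companion $x(s)\in B(s)$ with $\|x(t)-x(s)\|\le\omega(x,|t-s|)\le\omega(B,|t-s|)$; running over all $x\in B$ and symmetrizing in $t,s$ yields $B(t)\subset B(s)+\omega(B,|t-s|)B_X$ and $B(s)\subset B(t)+\omega(B,|t-s|)B_X$, which is precisely the claimed Hausdorff bound. Next I would fix the ambient ball: set $M=\sup_{x\in B}\|x\|_\infty$, which is finite because $B$ is bounded in $C(I,X)$. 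Since $\|\mathbb T(C)\|\le\sup_{c\in C}\|c\|$ for every $C\in\mathscr B(X)$ (as $J$ is an isometry with $\|J(C)\|=\sup_{c\in C}\|c\|$ and $Q,T$ are contractions) and $\sup_{x\in B}\|x(\tau)\|\le M$ for every $\tau\in I$, both $\mathbb T(B(t))$ and $\mathbb T(B(s))$ lie in $V\cap(MB_{C(K)})$. By Theorem 3.11(iv), $\digamma$ is $c_M$-Lipschitzian there, with $c_M=\digamma((1+M)\mathbb TB_X)=\mu((1+M)B_X)$.

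Assembling these ingredients gives
\[
|\mu(B(t))-\mu(B(s))|=|\digamma(\mathbb T(B(t)))-\digamma(\mathbb T(B(s)))|\le c_M\,\|\mathbb T(B(t))-\mathbb T(B(s))\|\le c_M\, d_H(B(t),B(s))\le c_M\,\omega(B,|t-s|),
\]
so the first assertion holds with the constant $L_B=\mu((1+M)B_X)$, which depends only on $B$ and not on the particular pair $t,s$. For the second assertion, if $B$ is equicontinuous then, $I$ being compact, $B$ is uniformly equicontinuous, whence $\omega(B,\eps)\to0$ as $\eps\to0^+$; substituting this into the inequality yields $|\mu(B(t))-\mu(B(s))|\to0$ as $s\to t$, i.e. $t\mapsto\mu(B(t))$ is continuous on $I$.

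The proof is mostly assembly, and I do not expect a serious obstacle; the two points that require care are keeping the Lipschitz constant uniform in $(t,s)$ — handled by confining every relevant element to the single ball $MB_{C(K)}$ so that one fixed constant $c_M$ governs all pairs — and justifying the use of $\mathbb T$ and $\digamma$ on the non-convex sets $B(t)$, which is valid because $\mathbb T(B(t))=\mathbb T(\overline{\rm co}B(t))\in V$ via the support-function identity. Everything else reduces to the Hausdorff estimate and the two Lipschitz properties already established.
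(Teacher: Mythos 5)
Your proof is correct and follows essentially the same route as the paper: represent $\mu$ as $\digamma\circ\mathbb T$, use the Lipschitz property of $\digamma$ on a bounded portion of $V$ containing all the slices $\mathbb T(B(t))$ (the paper takes the radius $r=\|\mathbb T(B_0)\|$ with $B_0=\{x(t):x\in B,\,t\in I\}$ rather than your $M=\sup_{x\in B}\|x\|_\infty$, an immaterial difference), and then bound $\|\mathbb T(B(t))-\mathbb T(B(s))\|$ by $d_H(B(t),B(s))\le\omega(B,|t-s|)$ via the $1$-Lipschitz property of $\mathbb T$. Your explicit justification for applying $\mathbb T$ to the non-convex slices $B(t)$ through the support-function identity is a point the paper glosses over, but otherwise the two arguments coincide.
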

\begin{proof}
Since $B\subset C(I,X)$ is bounded, $B_0\equiv\{x(t):x\in B, t\in I\}$ is a bounded subset of $X$. Let $r=\|f_{B_0}\|_{C(K)}$, where $f_{B_0}=\mathbb T(B_0)=TQJ(B_0)$, $C(K)$, $T,Q$ and $J$ are defined as in Section 2. It follows from Theorem 3.11 that $\digamma(f_B)=\mu(B)$ for $B\in \mathscr B(X)$ defines a monotone continuous convex function on $V$, and it is $L_B$-Lipschitzian on $V\cap rB_{C(K)}$, where $L_B=\digamma\Big((1+r)f_{B_X}\Big)$. Note that for all $t\in I$, $B(t)\subset B_0$. Since the three mappings $T,Q$ and $J$ are order preserving, we conclude that $f_{B(t)}\leq f_{B_0}$, hence, $f_{B(t)}\in V\cap rB_{C(K)}$.  $\forall s,t\in I$, we have
\begin{equation}\nonumber
\begin{aligned}
|\;\mu(B(t))-\mu(B(s))\;|&=|\;\digamma(f_{B(t)})-\digamma(f_{B(s)})\;|\\
&\leq L_B\cdot\|f_{B(t)}-f_{B(s)}\|\\
&= L_B\cdot\|TQJ(B(t))-TQJ(B(s))\|\\
&= L_B\cdot\|Q[J(B(t))-J(B(s))]\|\\
&\leq L_B\cdot\|J(B(t))-J(B(s))\|\\
&= L_B\cdot d_H(\;\overline{\rm co}(B(t)),\overline{\rm co}(B(s))\;)\\
&\leq L_B\cdot d_H(B(t),B(s))\\
&\leq L_B\cdot\omega(B,|t-s|).
\end{aligned}
\end{equation}
Thus,  the proof is complete.
\end{proof}

\begin{lemma}
 Suppose that $\mu$ is a convex measure of noncompactness on $X$, $B_0\subset C(I,X)$ is a nonempty equicontinuous bounded set, and that $B_{n+1}\subset B_n\neq\emptyset\;(n=0,1,2,\cdots)$. Then $\{\mu(B_n)\}_{n\geq 1}\subset C(I,\R)$ is again an equicontinuous subset.
\end{lemma}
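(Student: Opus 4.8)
The plan is to derive the statement directly from Lemma 7.2 applied to each $B_n$, and then to upgrade the pointwise continuity of the individual maps $t\mapsto\mu(B_n(t))$ to equicontinuity of the whole family by checking that \emph{both} the Lipschitz constant and the controlling modulus of continuity supplied by that lemma may be chosen independently of $n$. For a bounded set $B\subset C(I,X)$ write $\widehat B=\{x(t):x\in B,\ t\in I\}$ for its range set in $X$; it is bounded because $B$ is bounded in the sup-norm. Lemma 7.2 then gives, for every $n\geq1$,
\[|\mu(B_n(t))-\mu(B_n(s))|\leq L_{B_n}\,\omega(B_n,|t-s|),\qquad t,s\in I,\]
where $L_{B_n}=\digamma\big((1+r_n)\mathbb T(B_X)\big)$ and $r_n=\|\mathbb T(\widehat{B_n})\|_{C(K)}$.

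Next I would establish the two uniformities. Since $B_{n+1}\subset B_n\subset B_0$, the range sets are nested, $\widehat{B_n}\subset\widehat{B_0}$, so order-preservation of $\mathbb T$ yields $0\leq\mathbb T(\widehat{B_n})\leq\mathbb T(\widehat{B_0})$ in the cone $C(K)^+$; as the sup-norm on $C(K)$ is monotone, $r_n\leq r_0<\infty$. Because $\mathbb T(B_X)\geq0$ and $\digamma$ is monotone (Theorem 3.11 ii)), it follows that
\[L_{B_n}=\digamma\big((1+r_n)\mathbb T(B_X)\big)\leq\digamma\big((1+r_0)\mathbb T(B_X)\big)=:L\]
for all $n$, with $L$ a finite constant independent of $n$. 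On the other side, $B_n\subset B_0$ immediately gives $\omega(B_n,\delta)\leq\omega(B_0,\delta)$ for every $\delta>0$, since the modulus of continuity of a family is the supremum of the moduli of its members.

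Combining the two estimates produces $|\mu(B_n(t))-\mu(B_n(s))|\leq L\,\omega(B_0,|t-s|)$ for all $n\geq1$ and all $t,s\in I$, the right-hand side being independent of $n$. Equicontinuity of $B_0$ means $\omega(B_0,\delta)\to0$ as $\delta\to0^+$, so given $\eps>0$ one may pick $\delta>0$ with $L\,\omega(B_0,\delta)<\eps$ \emph{simultaneously} for all $n$; this is precisely equicontinuity of $\{\mu(B_n(\cdot))\}_{n\geq1}\subset C(I,\R)$. The only point genuinely requiring care—the main obstacle—is the uniform bound on the Lipschitz constants $L_{B_n}$: everything hinges on the range sets $\widehat{B_n}$ being nested inside the bounded set $\widehat{B_0}$, so that the monotonicity of $\digamma$ can be invoked to cap the $r$-dependent constant. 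This is exactly where the hypotheses $B_{n+1}\subset B_n$ and the boundedness of $B_0$ (ensuring $r_0<\infty$) are used; the modulus-of-continuity comparison is then automatic from the inclusions.
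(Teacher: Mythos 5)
Your proof is correct and follows essentially the same route as the paper: apply the preceding Lipschitz-type estimate (the paper's Lemma 7.1 --- you cite it as ``Lemma 7.2'', but you clearly mean the lemma giving $|\mu(B(t))-\mu(B(s))|\leq L_B\,\omega(B,|t-s|)$) to each $B_n$, then use the nesting $B_n(I)\subset B_0(I)$ together with order-preservation of $\mathbb T$ and monotonicity of $\digamma$ to bound the constants $L_{B_n}$ uniformly, and $\omega(B_n,\cdot)\leq\omega(B_0,\cdot)$ to conclude. The paper's argument is identical in substance, so there is nothing to add.
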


\begin{proof}
Since $B_0$ is an equicontinuous bounded set, each $B_n\neq\emptyset$ is again an equicontinuous bounded subset. Let $C_1:=\{x(t):x\in B_0, t\in I\}$, $r=\|f_{C_1}\|$, where $f_{C_1}=TQJ(C_1)$. Then it follows from Lemma 7.1, for all $ n\in \N$ and $t,s\in I$,
$$|\;\mu(B_n(t))-\mu(B_n(s))\;|\leq L_n\cdot \omega (B_n,|t-s|),$$
where $L_n=\digamma\Big((1+\|f_{B_n(I)}\|)f_{B_X}\Big)$, $B_n(I)=\{x(t):x\in B_n,t\in I\}$. Since $B_n(I)\subset C_1$, $f_{B_n(I)}\leq f_{C_1}$.
According to the monotonicity of $\digamma$, we have $L_{n}\leq\digamma\Big((1+\|f_{C_1}\|)f_{B_X}\Big)$.
\begin{equation}
\begin{aligned}
|\;\mu\Big(B_n(t)\Big)-\mu\Big(B_n(s)\Big)\;|&\leq \digamma\Big((1+\|f_{C_1}\|)f_{B_X}\Big)\cdot \omega(B_n,|t-s|)\\
&\leq\digamma\Big((1+r)f_{B_X}\Big)\omega(B_0,|t-s|).
\end{aligned}
\end{equation}
Since $B_0$ is equicontinuous, $\big\{\mu(B_n(t))\big\}_{n\geq 1}$ is equicontinuous with respect to $t\in I$.
\end{proof}
The following result is an extension of J. Bana\'{s} and K. Goebel \cite[Th 13.3.1]{bag}.

\begin{theorem}
Let $X$ be a Banach space, $I=[0,a]$ and $\mu$ be a convex MNC on $X$, $B(x_0,r)=\{x\in X: \|x-x_0\|\leq r\}$. Suppose that $f(\cdot,\cdot):[0,a]\times B(x_0,r)\rightarrow X$ is uniformly continuous and $w=w(\cdot,\cdot)$ is a Kamke function. If $f$ is bounded on $[0,a]\times B(x_0,r)$ by $(0\leq)\; \xi\leq r$ and if the following condition is satisfied: For all $B\in\mathscr{B}(X)$ and almost all $t\in [0,a_1]$
\begin{equation}\mu(f(t,B))\leq w(t,\mu(B)), \end{equation}
 then the Cauchy problem (7.1)
has at least one solution $x\in C([0,a_1],X)$, where $a_1=\min\{1,a\}$. In particular, if $\mu$ is a sublinear MNC, then $a_1=a$, i.e.
the Cauchy problem (7.1)
has at least one solution $x\in C([0,a],X)$
\end{theorem}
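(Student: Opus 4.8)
The plan is to realize a solution of (7.1) as a fixed point of the Picard--Lindel\"of operator $A$ given by $Ax(t)=x_0+\int_0^t f(s,x(s))\,ds$, following the Bana\'s--Goebel scheme but feeding it the integral inequality of Part II in place of the classical one. First I would fix $I_1=[0,a_1]$ and take $\mathcal B_0\subset C(I_1,X)$ to be the closed, bounded, convex, equicontinuous set of all $x$ with $x(0)=x_0$, $x(t)\in B(x_0,r)$ and $\|x(t)-x(s)\|\le\xi|t-s|$. Since $\|f\|\le\xi\le r$ and $t\le a_1\le1$, one checks $\|Ax(t)-x_0\|\le\xi t\le r$ and $\|Ax(t)-Ax(s)\|\le\xi|t-s|$, so $A:\mathcal B_0\to\mathcal B_0$ is a well-defined self-map, continuous by the uniform continuity of $f$. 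I would then build the Kuratowski--Darbo tower $\mathcal B_{n+1}=\overline{\rm co}(A\mathcal B_n)$, a decreasing sequence of nonempty closed convex equicontinuous subsets of $\mathcal B_0$ with $A\mathcal B_n\subset\mathcal B_{n+1}$.

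The heart of the argument is a scalar recursion for $v_n(t)\equiv\mu(\mathcal B_n(t))$, where $\mathcal B_n(t)=\{x(t):x\in\mathcal B_n\}$. Using that the evaluation $x\mapsto x(t)$ is affine and continuous, together with convexification invariance (P3), density determination and translation invariance (Theorem 3.4), I obtain $v_{n+1}(t)\le\mu\big(\{\int_0^t f(s,x(s))\,ds:x\in\mathcal B_n\}\big)=\mu\big(\int_0^t G_n(s)\,ds\big)$, where $G_n=\{f(\cdot,x(\cdot)):x\in\mathcal B_n\}$ and $G_n(s)=f(s,\mathcal B_n(s))$. Now $G_n$ is equicontinuous (it is the composition of the uniformly continuous $f$ with the equicontinuous $\mathcal B_n$) and bounded by $\xi$, so Corollary 5.4 applies on $I_1\subset[0,1]$ and gives $\mu\big(\int_0^t G_n(s)\,ds\big)\le\int_0^t\mu(G_n(s))\,ds$. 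Combining this with the standing hypothesis $\mu(f(s,B))\le w(s,\mu(B))$ yields the key recursion $v_{n+1}(t)\le\int_0^t w(s,v_n(s))\,ds$, the measurability of $s\mapsto w(s,v_n(s))$ being guaranteed by the continuity of $v_n$ (Lemma 7.1) and the Carath\'eodory properties of $w$.

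Next I would pass to the limit. The $v_n$ decrease to some $v$ and are equicontinuous by Lemma 7.2, so by Dini's theorem the convergence is uniform, $v$ is continuous, and $v(0)=\mu(\{x_0\})=0$; dominated convergence then turns the recursion into the comparison inequality $v(t)\le\int_0^t w(s,v(s))\,ds$. The main obstacle is to conclude $v\equiv0$ via the Kamke property, which requires certifying that $v$ lies in the admissible comparison class, i.e. $\lim_{t\to0^+}v(t)=\lim_{t\to0^+}v(t)/t=0$. The first limit is immediate from continuity; for the second I would exploit the a priori linear estimate coming from equicontinuity, namely $\mathcal B_\infty(t)\subset B(x_0,\xi t)$, whence by translation invariance $v(t)\le\mu(\xi t B_X)$, a quantity whose ratio to $t$ stays bounded as $t\to0^+$ by the convexity of $\digamma$ (Lemma 3.5). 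Feeding this control back into the comparison inequality forces $v(t)/t\to0$, and the Kamke uniqueness then gives $v(t)=\mu(\mathcal B_\infty(t))=0$ for every $t$, so each section $\mathcal B_\infty(t)$ is relatively compact by the noncompactness property (P1).

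Finally, $\mathcal B_\infty=\bigcap_n\mathcal B_n$ is nonempty: choosing $x_n\in\mathcal B_n$, the equicontinuity together with the relative compactness of the sections gives, through Arzel\`a--Ascoli, a limit point that lies in every $\mathcal B_n$. This $\mathcal B_\infty$ is closed, convex, equicontinuous with relatively compact sections, hence compact convex by Arzel\`a--Ascoli, and satisfies $A\mathcal B_\infty\subset\mathcal B_\infty$. Schauder's fixed point theorem then produces $x^*\in\mathcal B_\infty$ with $x^*=Ax^*$, i.e. $x^*(t)=x_0+\int_0^t f(s,x^*(s))\,ds$; since $f$ and $x^*$ are continuous, $x^*\in C^1([0,a_1],X)$ solves (7.1). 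For the sublinear case, Corollary 4.9(ii) (equivalently Corollary 5.4(ii)) supplies the integral inequality $\mu\big(\int_0^t G_n\big)\le\int_0^t\mu(G_n)$ with no restriction $t\le1$, so the whole construction runs on the full interval $[0,a]$ and yields $a_1=a$.
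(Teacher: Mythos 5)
Your overall architecture coincides with the paper's: the same set $B_0$, the same tower $B_{n+1}=\overline{\rm co}\big(A(B_n)\big)$, the scalar recursion $u_{n+1}(t)\le\int_0^t w(s,u_n(s))\,ds$ via Corollary 5.4, uniform convergence of $\{u_n\}$ from Lemma 7.2, the Kamke comparison, and Arzel\`a--Ascoli plus Schauder at the end. The one place where you diverge is also the one place where your argument breaks: the verification that $u_\infty(t)/t\to 0$, which is indispensable because the Kamke property only excludes nonzero solutions of $u(t)\le\int_0^t w(s,u(s))\,ds$ \emph{within the class} $\lim_{t\to0^+}u(t)/t=\lim_{t\to0^+}u(t)=0$. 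Your estimate $B_\infty(t)\subset B(x_0,\xi t)$ gives, via translation invariance and convexity of $s\mapsto\mu(sB_X)$, only $u_\infty(t)\le\mu(\xi t B_X)\le\xi t\,\mu(B_X)$, i.e. $u_\infty(t)/t=O(1)$ --- and boundedness of the ratio is not enough. Concretely, for the Kamke function $w(s,u)=u/s$ the function $v(t)=Ct$ with $C>0$ satisfies $v(t)=\int_0^t w(s,v(s))\,ds$ exactly, so ``feeding the linear bound back into the comparison inequality'' cannot force $v(t)/t\to0$: the comparison inequality together with $v(t)\le Ct$ is perfectly consistent with $v\not\equiv0$.

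The paper closes this gap using the uniform continuity of $f$, which your proposal invokes only to get continuity of $A$. Setting $y(t)=x_0+tf(0,x_0)$ and $a(t)=\sup\{\|f(0,x_0)-f(s,x)\|:\,s\le t,\ \|x-x_0\|\le\xi s\}$, uniform continuity gives $a(t)\to0$ as $t\to0^+$ and $\big(A(B_0)\big)(t)\subset x_0+tf(0,x_0)+ta(t)B(0,1)$; hence $u_1(t)\le\mu\big(ta(t)B(0,1)\big)\le ta(t)\,\mu\big(B(0,1)\big)$ once $ta(t)\le1$, so $u_1(t)/t\to0$, and a fortiori $u_\infty(t)/t\to0$ since $u_\infty\le u_1$. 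If you replace your linear bound by this first-iterate refinement, the remainder of your proof goes through as written and is essentially the paper's argument.
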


\begin{proof} Let $I=[0,a]$ and
\begin{equation}\nonumber
B_0=\Big\{\;x(t)\in C(I,X):x(0)=x_0\;{\rm with}\; \|x(t)-x(s)\|\leq\xi|t-s|,\;\forall\;s,t\in I\Big\}.
\end{equation}
Then $B_0$ is an equicontinuous bounded closed convex set. The transformation $A: B_0\rightarrow C(I,X)$ defined for $x\in B_0$ and $t\in I$ by
\begin{equation}
(Ax)(t)=x_0+\int_0^t f(s,x(s))ds
\end{equation}
is continuous self-mapping on $B_0$. Let $B_{n+1}=\overline{{\rm co}}\Big(A(B_n)\Big),\;n=0,1,2,\cdots$. It is easy to observe that $\{B_n\}$ is a decreasing sequence of equicontinuous closed convex subsets.  Let $u_n(t)=\mu(B_n(t))$. Obviously $0\leq u_{n+1}(t)\leq u_n(t)$ for $n=0,1,\cdots$. By Lemma 7.2, $\{u_n\}_{n\geq 1}$ is a bounded equicontinuous subset of $C(I,X)$. Therefore, it converges uniformly to a function $u_\infty$ defined by \[u_\infty(t)=\lim\limits_{n\rightarrow \infty}u_n(t)\] for all $t\in I$. Let $y(t)=x_0+f(0,x_0)t$ for all $t\in I$. Then for every $x\in A(B_0)$ and for all $t\in I$, we have
\begin{equation}\nonumber
\|x(t)-y(t)\|\leq t\cdot \sup\Big\{\|f(0,x_0)-f(s,x)\|:s\leq t,\|x-x_0\|\leq\xi s\Big\}\equiv ta(t).
\end{equation}
Note that $\lim\limits_{t\rightarrow 0^+}a(t)=0$ and that
\begin{equation}\nonumber
\Big(A(B_0)\Big)(t)\subset B\Big(y(t),ta(t)\Big)=x_0+tf(0,x_0)+ta(t)B(0,1).
\end{equation}
Since $\mu$ is a convex MNC,
\begin{equation}
\begin{aligned}
u_1(t)&=\mu(B_1(t))=\mu\Big(\overline{{\rm co}}\Big(A(B_0)(t)\Big)\Big)\\
&\leq \mu\Big(x_0+tf(0,x_0)+ta(t)B(0,1)\Big)\\
&=\mu\Big(tf(0,x_0)+ta(t)B(0,1)\Big)\\
&=\mu\Big(ta(t)B(0,1)\Big)
\end{aligned}
\end{equation}
Due to the convexity of $\mu$, for all $0\leq t\leq a$ with $ta(t)\leq1$,
\begin{equation}
\mu\Big(ta(t)B(0,1)\Big)\leq ta(t)\mu\Big(B(0,1)\Big).
\end{equation}
Thus, we conclude that \[\lim\limits_{t\rightarrow 0^+}\frac{u_1(t)}{t}=\lim\limits_{t\rightarrow 0^+}a(t)\mu\Big(B(0,1)\Big)=0.\]
 Since $0\leq u_\infty(t)\leq u_n(t)\leq u_1(t)$,  $\lim\limits_{t\rightarrow 0^+}\frac{u_\infty(t)}{t}=0$.

 Note that the uniform continuity of $f(\cdot,\cdot)$ implies that the mapping $F$ defined by $(Fx)(t)=f(t,x(t))$ maps every equicontinuous subset of $B_0$ into an equicontinuous subset of it.

   If either $0\leq t\leq\min\{1,a\}$, or, $\mu$ is a sublinear MNC, then by Corollary 4.9,
\begin{equation}\nonumber
\begin{aligned}
u_{n+1}(t)&=\mu\Big(\overline{{\rm co}}\Big(A(B_n)(t)\Big)\Big)=\mu\Big(\int_0^tf(s,B_n(s))ds\Big) \\
&\leq\int_0^t\mu\Big\{f(s,B_n(s))\Big\}ds\leq\int_0^tw(s,u_n(s))ds,
\end{aligned}
\end{equation}
which implies $u_\infty(t)\leq\int_0^t w(s,u_\infty(s))ds$. Consequently, $u_\infty(t)=0$ for all $0\leq t\leq\min\{1,a\}$.
Since $u_n$ converges to  $u_\infty=0$ uniformly on $I$,  $\lim\limits_{n\rightarrow\infty}\max\limits_{t\in [0,a]}u_n(t)=0$. It follows  from the generalized  Arzela-Ascoli theorem, the set $B_\infty=\cap_{n=1} ^\infty B_n$  is nonempty convex compact set in $C(I, X)$. Obviously,  $A$ is a self-mapping on $B_\infty$. It follows from the Schauder theorem that $A$ has a fixed point $x\in B_\infty$.

\end{proof}

The next result (i.e. the second example) of solvability of the problem (7.1) is a generalization of Theorem 7.3. It is also an extension of Bana\'{s} and Goebel \cite[Th 13.3.1]{bag}.
The space $C_b(\Omega)$  and the mapping $\mathbb J_G$ are the same as in Definition 4.1.
\begin{theorem}
Let $X$ be a Banach space, $I=[0,a]$ and $\mu$ be a convex MNC on $X$. Suppose that

i) $f(\cdot,\cdot):I\times X\rightarrow X$ is bounded and continuous, and $w=w(\cdot,\cdot)$ is a Kamke function.

ii) For any equicontinous set $G\subset C(I,X)$, $\mathbb J_G[0,a]\subset C_b(\Omega)$ is essentially separable in $C_b(\Omega)$; in particular,  if $F\equiv f(t,\cdot)$ satisfying one of the following three conditions:

a) $F$ maps each equicontinuous set  into an equicontinuous set of $C(I, X)$;

b) $F$ maps every equicontinuous set  into an equiregulated set of $R(I, X)$ (see, Definition 5.6 ii) );

c) $F$ maps every equicontinuous set  into a uniformly measurable set of $L_1(I, X)$ (see, Definition 5.10).

iii)  For all $B\in\mathscr{B}(X)$ and almost all $t\in [0,a]$
\begin{equation}\nonumber
\mu(f(t,B))\leq w(t,\mu(B)), \end{equation}
 then the Cauchy problem (7.1)
has at least one solution $x\in C([0,a_1],X)$, where $a_1=\min\{1,a\}$. In particular, if $\mu$ is a sublinear MNC, then $a_1=a$, i.e.
the Cauchy problem (7.1)
has at least one solution $x\in C([0,a],X)$.
\end{theorem}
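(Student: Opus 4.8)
The plan is to adapt the Bana\'{s}--Goebel construction used in the proof of Theorem 7.3, replacing the role played there by uniform continuity of $f$ (which forced strong measurability of the relevant envelope map through equicontinuity and Theorem 5.3) by the weaker structural hypothesis (ii). First I would set $\xi=\sup_{(t,x)\in I\times X}\|f(t,x)\|<\infty$ and introduce the equicontinuous, bounded, closed, convex set
\[
B_0=\Big\{x\in C(I,X):x(0)=x_0,\ \|x(t)-x(s)\|\leq\xi|t-s|,\ \forall\,s,t\in I\Big\},
\]
together with the Picard--Lindel\"{o}f operator $(Ax)(t)=x_0+\int_0^t f(s,x(s))\,ds$. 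Since $f$ is bounded and continuous, $s\mapsto f(s,x(s))$ is continuous for each $x\in B_0$, so the integral is well defined, the bound $\xi$ forces $A(B_0)\subset B_0$, and dominated convergence makes $A$ a continuous self-map of $B_0$. Setting $B_{n+1}=\overline{\rm co}\big(A(B_n)\big)$ then yields a decreasing sequence of equicontinuous bounded closed convex subsets of $C(I,X)$ on each of which $A$ is again a self-map.

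Next I would track the scalar functions $u_n(t)=\mu(B_n(t))$. By Lemma 7.2 the family $\{u_n\}_{n\geq1}$ is bounded and equicontinuous in $C(I,\R)$, so the pointwise-decreasing limit $u_\infty(t)=\lim_n u_n(t)$ is attained uniformly. The estimate $\lim_{t\to0^+}u_\infty(t)/t=0$ is obtained exactly as in Theorem 7.3: writing $y(t)=x_0+tf(0,x_0)$, continuity of $f$ at $(0,x_0)$ gives $(A(B_0))(t)\subset B\big(y(t),ta(t)\big)$ with $a(t)\to0$, whence $u_1(t)\leq ta(t)\,\mu(B(0,1))$ and $0\leq u_\infty\leq u_1$.

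The decisive step is the integral inequality $u_{n+1}(t)\leq\int_0^t w(s,u_n(s))\,ds$. For each $n$ the relevant set is $F(B_n)=\{f(\cdot,x(\cdot)):x\in B_n\}$; since $B_n$ is equicontinuous, hypothesis (ii)---or, concretely, any one of conditions a), b), c) through Theorems 5.3, 5.7 and 5.11 respectively---guarantees that $\mathbb J_{F(B_n)}:I\to C_b(\Omega)$ is strongly measurable. This is precisely the input required to invoke Corollary 4.9, which yields, for $0<t\leq a_1=\min\{1,a\}$ (or for all $t\in I$ when $\mu$ is sublinear),
\[
u_{n+1}(t)=\mu\Big(\int_0^t f(s,B_n(s))\,ds\Big)\leq\int_0^t\mu\big(f(s,B_n(s))\big)\,ds\leq\int_0^t w(s,u_n(s))\,ds,
\]
where the last inequality uses assumption (iii). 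I expect this verification of strong measurability of $\mathbb J_{F(B_n)}$---the point at which continuity of $f$ alone does not suffice and the structural condition (ii) must be brought in---to be the main obstacle, since it is exactly what separates Theorem 7.4 from Theorem 7.3.

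Finally, passing to the uniform limit gives $u_\infty(t)\leq\int_0^t w(s,u_\infty(s))\,ds$ on $[0,a_1]$; combined with $\lim_{t\to0^+}u_\infty(t)/t=0$ and the defining property of the Kamke function $w$, this forces $u_\infty\equiv0$. Hence $\mu(B_n(t))\to0$ uniformly in $t$, and the generalized Arzela-Ascoli theorem shows that $B_\infty=\bigcap_n B_n$ is a nonempty convex compact subset of $C([0,a_1],X)$ on which $A$ is a self-map. The Schauder fixed point theorem then produces a fixed point $x\in B_\infty$ of $A$, which is the sought solution of (7.1); when $\mu$ is sublinear the inequality above holds on all of $I$, so one may take $a_1=a$.
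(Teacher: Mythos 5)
Your proposal is correct and follows essentially the same route as the paper: the Bana\'{s}--Goebel iteration $B_{n+1}=\overline{\rm co}(A(B_n))$, Lemma 7.2 for equicontinuity of $u_n(t)=\mu(B_n(t))$, the hypothesis (ii) (via Theorems 5.3, 5.7, 5.11) to secure strong measurability of the envelope map and hence Corollary 4.9, the Kamke comparison to force $u_\infty\equiv0$, and Arzela--Ascoli plus Schauder to conclude. Your explicit identification of $F(B_n)$ (rather than $B_n$) as the set to which the measurability hypothesis must be applied is in fact slightly more careful than the paper's own wording.
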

\begin{proof}

Assume that $f(t,x):[0,a]\times X\rightarrow X$ is bounded by $\xi$, and let
\begin{equation}\nonumber
B_0=\Big\{\;x(t)\in C(I,X):x(0)=x_0\;{\rm with}\; \|x(t)-x(s)\|\leq\xi|t-s|,\;\forall\;s,t\in I\Big\}.
\end{equation}
Then $B_0$ is an equicontinuous bounded closed convex set. The transformation $A: B_0\rightarrow C(I,X)$ defined for $x\in B_0$ and $t\in I$ by
\begin{equation}
(Ax)(t)=x_0+\int_0^t f(s,x(s))ds
\end{equation}
is continuous self-mapping on $B_0$, and it maps every nonempty subset of $C(I,X)$ into an equicontinuous subset. Let $B_{n+1}=\overline{{\rm co}}\Big(A(B_n)\Big),\;n=0,1,2,\cdots$. Then we obtain  a decreasing sequence $\{B_n\}$ of equicontinuous closed convex subsets. If we repeat the procedure of the proof of Theorem 7.3, then we get a decreasing sequence
$\{u_n\}$ of continuous functions $u_n(t)=\mu\Big(B_n(t)\Big)$ and $u_\infty\leq u_n$ for all $n\in\N$ defined by \[u_\infty(t)=\lim\limits_{n\rightarrow \infty}u_n(t)\] for all $t\in I$ such that  $\lim\limits_{t\rightarrow 0^+}\frac{u_\infty(t)}{t}=0$.

Since for each $n\in\N$, $\mathbb J_{B_n}(I)$ is essentially separable in $C_b(\Omega)$, by Theorem 5.2, $\mathbb J_{B_n}: I\rightarrow C_b(\Omega)$ is strongly measurable. If either $0<t\leq\min\{1,a\}$, or, $\mu$ is a sublinear MNC, then it follows from Corollary 4.9 that
\begin{equation}\nonumber
\begin{aligned}
u_{n+1}(t)&=\mu\Big(\overline{{\rm co}}\Big(A(B_n)(t)\Big)\Big)=\mu\Big(\int_0^tf(s,B_n(s))ds\Big) \\
&\leq\int_0^t\mu\Big\{f(s,B_n(s))\Big\}ds\leq\int_0^tw(s,u_n(s))ds,
\end{aligned}
\end{equation}
which implies $u_\infty(t)\leq\int_0^t w(s,u_\infty(s))ds$. Consequently, $u_\infty(t)=0$ for all $0\leq t\leq1$.
Since $u_n$ converges to  $u_\infty=0$ uniformly on $I$,  $\lim\limits_{n\rightarrow\infty}\max\limits_{t\in [0,1]}u_n(t)=0$. It follows  from the generalized  Arzela-Ascoli theorem, the set $B_\infty=\cap_{n=1} ^\infty B_n$  is nonempty convex compact set in $C(I, X)$. Obviously,  $A$ is a self-mapping on $B_\infty$. It follows from the Schauder theorem that $A$ has a fixed point $x\in B_\infty$.

We finish the proof by noting that one of the three particular cases a), b) and c) can always guarantee that the mappings $\mathbb J_{B_n}: I\rightarrow C_b(\Omega)$ are strongly measurable (Theorems 5.3, 5.7 and 5.11).

\end{proof}


\bibliographystyle{amsalpha}

\end{document}